 \newcolumntype{C}[1]{>{\centering\arraybackslash}p{#1}}
\newtheorem*{thmA}{Theorem 1}
\newtheorem*{thmB}{Theorem 2}
\newtheorem*{thmC}{Theorem 3}
\newtheorem{thm}{Theorem}[section]
 \newtheorem{cor}[thm]{Corollary}
 \newtheorem{lem}[thm]{Lemma}
 \newtheorem{prop}[thm]{Proposition}
 \newtheorem{ex}[thm]{Example}
 \theoremstyle{definition}
  \newtheorem{defn}[thm]{Definition}
 \theoremstyle{remark}
 \newtheorem{rem}[thm]{Remark}
\journal{Geometry and Topology}
\begin{document}

\title{Equi-variant and Stable Finite Decomposition Complexity}

\author{Jiawen Zhang\corref{notes}}
\cortext[notes]{This work was supported by the key discipline innovative talent training plan of Fudan University(EZH1411370/003/005), and the  European Research Council (ERC) grant of Goulnara ARZHANTSEVA, no. 259527.}
\ead{zhangj54@univie.ac.at}

\address{Faculty of Mathematics, University of Vienna, Oskar-Morgenstern-Platz 1, 1090 Vienna, Austria}

\begin{abstract}
  In this paper, we introduce and study various kinds of decomposition complexity. First, we give a characterization of residually finite groups having finite decomposition complexity (FDC). Secondly, we introduce equi-variant straight FDC (sFDC), and prove that a group having equi-variant sFDC if and only if its box space having sFDC. Finally, we show that elementary amenable groups have equi-variant sFDC by introducing something called stable FDC.
\end{abstract}

\begin{keyword}
Equi-variant straight FDC, Stable FDC, Residually finite groups, Elementary amenability.
\end{keyword}

\maketitle

\section{Introduction}
Finite decomposition complexity (FDC) is a concept introduced by E. Guentner, R. Tessera and G. Yu \cite{GTY12} in 2010, in order to solve certain strong rigidity problem including the stable Borel conjecture. Briefly speaking, a metric space has FDC if it admits an algorithm to decompose itself into some nice pieces which are easy to handle in certain asymptotic way. It generalizes finite asymptotic dimension, which was firstly introduced by M. Gromov in 1993 as a coarse analogue to the classical topological covering dimension, but it didn't get much attention until G. Yu proved that the Novikov higher signature conjecture holds for groups with finite asymptotic dimensions in 1998 \cite{Yu98}. FDC also implies Property A \cite{GTY12}, so by G. Yu's celebrated theorem, the coarse Baum-Connes conjecture holds for a metric space with bounded geometry and FDC \cite{yu2000coarse}. Straight finite decomposition complexity (sFDC) is a weak version of FDC introduced by A. Dranishnikov and M. Zarichnyi \cite{DZ14} in order to tell the difference between FDC and asymptotic property C.

On the other hand, given a residually finite group $G$ with a sequence of finite-index normal subgroups $\{N_k\}$, following J. Roe \cite{Roe03}, we can associate a metric space called the box space with respective to $\{N_k\}$. The idea goes back to G. A. Margulis who has firstly constructed a sequence of expanders using the box space of a group with Kazhdan's Property (T) in 1973 \cite{Mar73}. Box spaces can be viewed as a kind of coarse geometric approximation to the original group, and some relations have already been studied between a group and its box space. For example, a result of E. Guentner says that the box space has Property A if and only if the original group is amenable (see \cite{Roe03}). There are similar characterizations concerning a-T-menability \cite{chen2013characterization} and Property (T) \cite{willett2014geometric}. In this paper, we focus on FDC and straight FDC, and give some similar results. To be more precise, we prove:
\begin{thmA}\label{main1}
Let $G$ be a residually finite group with a sequence of finite-index normal subgroups $\{N_i\}$ with trivial intersection.
Then $G$ has FDC if and only if there exists a sequence of increasing positive numbers $\{r_i\}$ with $r_i \rightarrow \infty$ ($i \rightarrow \infty$), such that the coarse disjoint union of $\{B_{G/N_i}(1,r_i)\}$ has FDC. In particular, if the box space $\square_{\{N_i\}}G$ has FDC, then $G$ has FDC.
\end{thmA}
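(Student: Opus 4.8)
The plan is to recast everything in terms of metric families and derive the theorem from two ``fibering'' permanence principles together with one elementary consequence of the hypothesis $\bigcap_i N_i=\{1\}$.

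\emph{Two reductions.} First, for a finitely generated group $G$ with a word metric, $G$ has FDC if and only if the metric family $\{B_G(1,R):R\in\mathbb{N}\}$ has FDC: ``only if'' is hereditarity of FDC under subspaces, and ``if'' is the fibering permanence property of \cite{GTY12} applied to the $1$-Lipschitz map $g\mapsto d(1,g)\colon G\to[0,\infty)$, whose target has asymptotic dimension one, hence FDC, and whose preimages of bounded sets are annuli, i.e.\ subspaces of balls. Second, for a coarse disjoint union $X=\bigsqcup_i X_i$, $X$ has FDC if and only if the family $\{X_i\}$ has FDC: ``only if'' is again hereditarity, and ``if'' is fibering over $\mathbb{N}$ along the map $X_i\mapsto i$, which is bornologous since the $X_i$ drift apart, and whose preimages of bounded sets are finite unions of the $X_i$, handled by finite-union permanence. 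Thus ``$G$ has FDC'', ``$\{B_G(1,R)\}_R$ has FDC'' and ``$\bigsqcup_i B_{G/N_i}(1,r_i)$ has FDC'' are all assertions about the FDC of suitable families of balls. The localization step is that, because $\bigcap_i N_i=\{1\}$ — after, if necessary, replacing $N_i$ by $N_1\cap\cdots\cap N_i$, a change which leaves all the FDC assertions intact — for every $R$ there is $i_0(R)$ with $N_i\cap B_G(1,2R)=\{1\}$ for all $i\ge i_0(R)$, so that the quotient map $\pi_i\colon G\to G/N_i$ restricts to an isometry $B_G(1,R)\xrightarrow{\ \cong\ }B_{G/N_i}(1,R)$; moreover, if $G$ is infinite (the finite case being trivial) then $\sup_i\mathrm{diam}(G/N_i)=\infty$, since a finitely generated group admitting a trivial-intersection sequence of normal subgroups of bounded index is finite.

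\emph{Assembling the proof.} For ``$\Leftarrow$'': if $\bigsqcup_i B_{G/N_i}(1,r_i)$ has FDC, then by the second reduction the family $\{B_{G/N_i}(1,r_i)\}_i$ has FDC, hence so does the family of all balls $B_{G/N_i}(1,R)$ with $R\le r_i$; by localization every $B_G(1,R)$ is isometric to a member of this family, so $\{B_G(1,R)\}_R$ has FDC, and the first reduction gives that $G$ has FDC. The ``in particular'' is the case $r_i=\mathrm{diam}(G/N_i)$, where the coarse disjoint union is the box space $\square_{\{N_i\}}G$ itself (enlarging the $r_i$ if needed so that they increase to $\infty$, using the last remark). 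For ``$\Rightarrow$'': if $G$ has FDC, then $\{B_G(1,R)\}_R$ has FDC; using localization, choose an increasing sequence $r_i\to\infty$ with $N_i\cap B_G(1,2r_i)=\{1\}$ for all large $i$ (the finitely many remaining components being bounded, hence harmless), so that each such $B_{G/N_i}(1,r_i)$ is isometric to $B_G(1,r_i)$; then $\{B_{G/N_i}(1,r_i)\}_i$ is isometric to a subfamily of $\{B_G(1,R)\}_R$ and so has FDC, and the second reduction shows $\bigsqcup_i B_{G/N_i}(1,r_i)$ has FDC.

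The main obstacle is bookkeeping rather than any single idea: one must check that the fibering theorem applies in the form invoked (properness/uniform discreteness of the source, and the correct identification of the ``preimages of bounded sets'' families), confirm that being isometric to a member of an FDC family forces FDC — a winning decomposition strategy transports along isometries, and FDC of a family depends only on its members up to isometry — and verify that passing to a decreasing sequence $\{N_i\}$ disturbs none of the hypotheses or conclusions. The only place where geometry genuinely enters is the interplay of the localization step with the permanence principles; everything else is routine.
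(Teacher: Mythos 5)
Your proposal is correct and takes essentially the same route as the paper: both arguments hinge on the localization lemma (for every $R$ the quotient map is isometric on $B_G(1,R)$ for all large $i$, by nestedness and trivial intersection) together with reducing FDC of $G$, and of the coarse disjoint union, to FDC of the family of balls. The only packaging difference is that you invoke the fibering theorem over $[0,\infty)$ and over $\mathbb{N}$ where the paper writes the corresponding annulus decomposition $G=\big(\bigsqcup_j A_j\big)\cup\big(\bigsqcup_j B_j\big)$ by hand --- these are the same fibers --- and your detour through $\sup_i \mathrm{diam}(G/N_i)=\infty$ for the ``in particular'' clause is unnecessary, since the family $\{B_{G/N_i}(1,r_i)\}$ is a subspace of $\{G/N_i\}$ for any choice of $r_i$.
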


Then we introduce equi-variant sFDC, and prove:
\begin{thmB}\label{main2}
A residually finite group has equi-variant sFDC if and only if its box space has sFDC.
\end{thmB}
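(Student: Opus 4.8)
As with Theorem 1, the plan is to prove the two implications separately, in both cases by transporting a straight decomposition sequence between the two sides; the bridge is the following dictionary. Write $\widehat{G}=\varprojlim_i G/N_i$ for the profinite completion of $G$ relative to $\{N_i\}$, equipped with the left translation action of $G$, which is free by the trivial-intersection hypothesis; the box space $\square_{\{N_i\}}G=\bigsqcup_i G/N_i$ (coarse disjoint union, inter-component distances tending to $\infty$) is the ``finite-level'' large-scale model of this action. We use the formulation of equi-variant sFDC of $G$ in terms of $G\curvearrowright\widehat{G}$: for every exhausting sequence $F_1\subseteq F_2\subseteq\cdots$ of finite subsets of $G$ there is a finite sequence of $G$-invariant clopen decompositions, the $n$-th at ``scale'' $F_n$, terminating in $F$-bounded pieces. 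Identifying a scale $r$ with the finite set $F=B_G(1,r)$, one checks that an $F$-reduced $G$-invariant clopen decomposition of $\widehat{G}$ pushes down, for each $i$, to a decomposition of the metric space $G/N_i$ at scale $\asymp r$ with uniformly bounded pieces (an $F$-move is a metric move of size $\le r$, and on the free action the $F$-relation has uniformly bounded orbits), and conversely a metric $r$-decomposition of $G/N_i$ is $F$-reduced; this is exactly the upgrade that lets us work with the full box space rather than the truncated box spaces of Theorem 1.

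Suppose first that $G$ has equi-variant sFDC. Given a non-decreasing scale sequence $\{r_n\}$ for $\square_{\{N_i\}}G$, set $F_n=B_G(1,r_n)$, apply the hypothesis to obtain a length-$m$ straight $G$-equivariant decomposition sequence of $G\curvearrowright\widehat{G}$ ending in an $F$-bounded family, and push every stage down to every $G/N_i$. By the dictionary this gives, uniformly in $i$, a length-$m$ metric decomposition sequence of $G/N_i$ at scales $\asymp r_n$ ending in a uniformly bounded family; since distinct components of the box space are more than $r_n$ apart, the unions over $i$ of these stages form a straight decomposition sequence of $\square_{\{N_i\}}G$ itself (of length $m$, up to one harmless trivial step and a bounded rescaling of the $r_n$'s). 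Hence the box space has sFDC.

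Conversely, suppose $\square_{\{N_i\}}G$ has sFDC. Given an exhausting sequence $\{F_n\}$ of finite subsets of $G$, put $r_n=\max_{g\in F_n}|g|$, enlarged to be non-decreasing and to dominate the first few inter-component gaps of the box space, feed $\{r_n\}$ into the sFDC of $\square_{\{N_i\}}G$, and restrict the resulting length-$m$ decomposition sequence to each component; this yields, uniformly in $i$, a length-$m$ metric decomposition sequence of $G/N_i$ at scales $\ge r_n$ ending in a uniformly bounded family, each stage of which is $F_n$-reduced on $G/N_i$ by the dictionary. The remaining — and main — task is to upgrade this uniform family of decompositions of the finite quotients into a single $G$-equivariant straight decomposition sequence of $G\curvearrowright\widehat{G}$. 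The obstacle is that the decompositions harvested from the box space on the various $G/N_i$ need not be compatible with the projections $G/N_j\to G/N_i$, so they do not glue to a clopen decomposition of $\widehat{G}$, let alone a $G$-invariant one, and a naive pullback from a single level fails because the $G$-orbits upstairs are larger than those downstairs.

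To beat this obstacle I would exploit that ``straight'' asks only for finitely many stages, so the equi-variant decomposition can be built stage by stage (there are $m$ of them), the relevant data on any one quotient being finite at each stage; then, passing to a subsequence of indices and extracting a limit along the inverse system $\{G/N_i\}_i$ by a compactness (K\"onig's-lemma type) argument — with the freeness of $G\curvearrowright\widehat{G}$ used to preserve $F_n$-reducedness in the limit — one obtains coherent $G$-invariant clopen decompositions of $\widehat{G}$, and one verifies that the terminal family stays $F$-bounded. The finitely many ``small-injectivity-radius'' components occurring at any given scale contribute only a uniformly bounded sub-family and are absorbed by one extra trivial decomposition step. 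I expect this limiting/coherence step, together with the bookkeeping that converts metric scales into $F$-reducedness, to be the heart of the argument, whereas the first implication is essentially pushing the equi-variant decompositions down and reassembling them.
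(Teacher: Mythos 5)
There is a genuine gap, and it sits exactly where you flag ``the heart of the argument.'' In the direction (box space has sFDC) $\Rightarrow$ (equi-variant sFDC), your plan is to extract, by a K\"onig's-lemma/compactness argument along the inverse system $\{G/N_i\}$, a coherent $G$-invariant clopen decomposition of $\widehat{G}$. This step is not carried out, and it is also more than the statement requires: the paper's definition of equi-variant sFDC asks only for a \emph{single} index $K$ (depending on the scale sequence) and only the \emph{last} decomposition to be $N_K$-invariant. Consequently no limit over $i$ is needed --- one chooses a single sufficiently deep level $K$ (so that $N_K\cap B_G(1,2c)=\{1\}$ for $c$ a bound on the terminal pieces, and $G/N_K$ is far from the earlier components), restricts the box-space decomposition sequence to the one component $G/N_K$, and pulls it back along $\pi_K$; the preimage of a terminal piece $Y$ splits as $\bigsqcup_{h\in N_K}U_Y\cdot h$ with each translate carried isometrically by $\pi_K$, which is precisely the required $N_K$-invariance. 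Your worry that ``a naive pullback from a single level fails because the $G$-orbits upstairs are larger'' dissolves once one notices that preimages under $\pi_K$ are automatically invariant under right multiplication by $N_K$, which is the invariance the definition asks for. Your proposed limiting argument, by contrast, faces a real obstruction you do not address: the intermediate families $\mathcal{Y}_1,\dots,\mathcal{Y}_{m-1}$ produced by sFDC of the box space have no uniform diameter bound, so their restrictions to a fixed finite window do not form the finitely-branching tree that K\"onig's lemma needs, and the harvested decompositions of $G/N_i$ are not $G$-invariant to begin with.

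Two further points. First, your ``dictionary'' between metric $r$-decompositions of $G/N_i$ and $F$-reduced decompositions upstairs is asserted, not proved, and it hides the one nontrivial technical issue: $r$-disjointness is \emph{not} preserved under $\pi_K^{-1}$ (the preimage of a single bounded set is an infinite union of cosets that are not mutually $r$-disjoint), so decomposition sequences cannot be transported verbatim. The paper resolves this by first proving that sFDC and equi-variant sFDC can equivalently be phrased via ``full decompositions'' (covers with Lebesgue number at least $R$ rather than $R$-disjoint unions); Lebesgue numbers, unlike disjointness, do survive transport along the locally isometric maps $\pi_K$, and this reformulation is what makes both implications go through. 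Any correct proof will need this device or a substitute for it. Second, you work throughout with an unproved reformulation of equi-variant sFDC in terms of $G$-invariant clopen decompositions of the profinite completion $\widehat{G}$; since the paper defines the notion directly via $N_K$-invariant decompositions of $G$ itself, that equivalence would itself need an argument. The forward direction of your proposal (equi-variant sFDC $\Rightarrow$ box space sFDC) is in the right spirit --- push the $N_K$-invariant data down to all $G/N_k$ with $k$ large and absorb the finitely many shallow components into one bounded set --- but as written it too leans on the unestablished dictionary and on pushing down \emph{every} stage, which requires the observation (made inductively in the paper) that $N_K$-invariance of the last decomposition forces all earlier pieces to be saturated under $N_K$.
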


To give some concrete examples with equi-variant sFDC, we introduce stable FDC. And we seek groups with this property in the class of amenable ones. Amenability is a well known concept introduced by von Neumann in 1929 \cite{neumann1929allgemeinen}. He also showed that finite groups and abelian groups are amenable, and amenability is closed under the following operations: subgroup, extension, quotient and direct limit. Groups which can be obtained from finite or abelian groups after finitely many such operations are called elementary amenable (see \cite{chou1980elementary}). With our notion of stable FDC, it's rather easy to show:
\begin{thmC}\label{main3}
Elementary amenable groups have stable FDC, which implies equi-variant sFDC.
\end{thmC}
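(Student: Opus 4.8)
The plan is to argue by transfinite induction along the Chou--Osin stratification of the class $\mathcal{EA}$ of elementary amenable groups. By a theorem of Chou (sharpened by Osin), $\mathcal{EA}$ is exhausted by an increasing transfinite family of subclasses: $\mathcal{EA}_0$ consists of the finite groups together with the finitely generated abelian groups; $\mathcal{EA}_{\alpha+1}$ consists of all groups that are either an extension $1 \to N \to G \to Q \to 1$ with $N,Q \in \mathcal{EA}_\alpha$, or a directed union $G = \bigcup_i G_i$ with all $G_i \in \mathcal{EA}_\alpha$; and $\mathcal{EA}_\lambda = \bigcup_{\alpha<\lambda}\mathcal{EA}_\alpha$ at limit ordinals, so that $\mathcal{EA} = \bigcup_\alpha \mathcal{EA}_\alpha$. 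Hence it is enough to prove: (i) finite groups and finitely generated abelian groups have stable FDC; (ii) stable FDC is preserved under group extensions; and (iii) stable FDC is preserved under directed unions. (Closure under subgroups is not logically needed for this stratification, but it is immediate from the definition, by restricting the translation action and the metric, and I will use it freely; in particular an arbitrary abelian group, being the directed union of its finitely generated subgroups, is then covered by (iii).) Once every elementary amenable group is known to have stable FDC, Theorem 3 is finished by the implication ``stable FDC $\Rightarrow$ equi-variant sFDC'', which is the very reason for isolating the notion and is recorded where stable FDC is defined: a stable decomposition is, in particular, an equi-variant straight one, so this reduces to comparing the definitions.

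\emph{Base case and directed unions.} A finite group is bounded in every word metric, hence trivially has stable FDC. A finitely generated abelian group has finite asymptotic dimension, and the (very robust) decompositions witnessing finite asymptotic dimension are in particular stable, so it too has stable FDC; this settles $\mathcal{EA}_0$. For directed unions, let $G = \bigcup_i G_i$ be directed with every $G_i$ having stable FDC. Since stable FDC is tested on the finitely generated subgroups of $G$ (equivalently, on its bounded-geometry coarse structure) and every such subgroup lies in some $G_i$ by directedness, the stable decompositions available in the $G_i$ assemble into one for $G$. This step is routine bookkeeping: the only content is to absorb the relevant finite data into a single member of the directed system.

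\emph{Extensions.} This is the crux, and the place where the definition of stable FDC earns its keep. The clean route is to first establish a fibering principle for stable FDC: if $f \colon X \to Z$ is a (large-scale Lipschitz) map to a space $Z$ with stable FDC such that the family of preimages of bounded sets has stable FDC uniformly, and compatibly with the relevant group action, then $X$ has stable FDC. Granting this, given an extension $1 \to N \to G \xrightarrow{\pi} Q \to 1$ with $N$ and $Q$ of stable FDC, one applies the principle to the quotient map $\pi \colon G \to Q$: the preimage of a bounded set in $Q$ is, up to uniformly bounded error, a single coset $gN$, and all cosets of $N$ are carried onto one another isometrically by left translations of $G$, so the uniformity and equi-variance required by the fibering principle are furnished for free by $N$ having stable FDC. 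Composing the stable decomposition of $Q$, pulled back through $\pi$, with the stable decomposition of $N$ on each resulting piece then produces a stable decomposition of $G$. The main obstacle is the metric bookkeeping inside the fibering principle: controlling the word metrics on $G$ against those on $N$ and on $Q$ (that $N \hookrightarrow G$ is a uniform coarse embedding and $\pi$ a uniform coarse surjection over the generating sets used to probe stability), and checking that $r$-disjointness at each scale and the complexity pattern survive the composition. The point of the theorem is precisely that, with stable FDC defined so as to build the needed uniformity and equi-variance into the decompositions, none of this requires a genuinely new idea beyond the corresponding arguments for FDC.
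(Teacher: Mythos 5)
Your architecture for the first half of the theorem (induction over the Chou hierarchy: base case finite and abelian, closure under extension via a fibering argument, closure under directed unions by absorbing the scale-$R$ data into one member of the system) is exactly the paper's. The main difference is that you never pin down what ``stable FDC'' is, and the paper's definition does the work you are trying to improvise: $G$ has stable FDC iff the \emph{metric family} $\{H/K : K \lhd H \leqslant G\}$ of all subquotients has FDC. With that definition the uniformity and equivariance you worry about in the extension step are built in for free, and no new ``stable fibering principle'' is needed --- the paper simply applies the ordinary fibering theorem for metric families to the map of families $H/K \to (HN/N)\big/(KN/N)$, identifies the kernels with the subquotients $(H\cap N)\big/(K\cap H\cap N)$ of $N$, and quotes the hypothesis on $N$. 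Likewise the abelian base case is not ``asdim decompositions are robust'': one must check FDC of the whole family of subquotients, which the paper reduces (via directed unions, the structure theorem, and the extension step) to the single family $\{\mathbb{Z},\ \mathbb{Z}/m\mathbb{Z} : m\in\mathbb{N}\}$, decomposed canonically.

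The genuine gap is the last sentence of your first paragraph: the implication ``stable FDC $\Rightarrow$ equi-variant sFDC'' is \emph{not} a comparison of definitions, because equi-variant sFDC is defined by the existence, for every challenge sequence $R_1<R_2<\cdots$, of a finite chain of decompositions of $G$ whose last stage is $N_K$-invariant for some $K$, and nothing in the definition of stable FDC hands you such invariant decompositions of $G$ itself. The paper's route is indirect and uses Theorem~2 in an essential way: stable FDC gives FDC of the subfamily $\{G/N_i\}$, hence FDC (so sFDC) of the box space $\square_{\{N_i\}}G$, and then the nontrivial equivalence of Theorem~2 --- proved by converting ordinary decompositions to ``full'' decompositions with controlled Lebesgue number and transporting them back and forth along the quotient maps $\pi_K$ --- converts sFDC of the box space into equi-variant sFDC of $G$. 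Without either that equivalence or a direct construction of $N_K$-invariant decompositions, your proof of the second assertion of the theorem does not go through.
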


We should remark here that a recent result of M. Finn-Sell and J. Wu says more that the box space of an elementary amenable group has finite asymptotic dimension which is bounded by its Hirsch length \cite{finn2015asymptotic}, which also implies that elementary amenable groups has equi-variant sFDC. We were told by J. Wu about their result in a discussion after we have proved our theorem.

This paper is organized as follows. In Section 2, we recall some basic definitions and results in coarse geometry concerning asymptotic dimension and finite decomposition complexity; In Section 3, firstly we prove Theorem 1, then we introduce equi-variant sFDC and show some characterizations for sFDC and equi-variant sFDC. Using these characterizations, we prove Theorem 2. In the last section, we introduce stable FDC, and show some permanence properties for it, and finally we prove Theorem 3.

~\\
\noindent{\bf Acknowledgment.}
I thank Guoliang Yu, Qin Wang, Yijun Yao and Jianchao Wu for many stimulating discussions. I would particularly like to thank Romain Tessera for suggesting the concept "stable FDC" in a discussion during my visit to Paris. I would also like to thank Andrzej Zuk for his hospitality during my visit to Paris 7 University, where the last part of this paper was finished. Finally, I would like to thank my advisor, Xiaoman Chen, for continuing support and introducing this topic to me.

\section{Preliminaries}

\subsection{Asymptotic Dimension and Finite Decomposition Complexity}
In this section, we recall two conceptions in coarse geometry: asymptotic dimension and finite decomposition complexity.
Asymptotic dimension was first introduced by M. Gromov in 1993, but it didn't gather much attention until G. Yu proved that the Novikov higher signature conjecture holds for groups with finite asymptotic dimensions in 1998 \cite{Yu98}. Here we also recommend \cite{BD05} for reference. Finite decomposition complexity is a conception which generalizes finite asymptotic dimension. It was recently introduced by E. Guentner, R. Tessera and G. Yu \cite{GTY12} to solve certain strong rigidity problem including the stable Borel conjecture.

Let's begin with some notations and basic definitions.
From now on, we will use the usual letters $X, Y, Z, \cdots$ to denote metric spaces, and use letters in curlycue $\mathcal{X}, \mathcal{Y}, \mathcal{U}, \mathcal{V}, \cdots$ to denote metric families. Recall a \emph{metric family} is a family consisting of metric spaces, usually denoted by $\mathcal{X}=\{X_i\}$.

Let $X$ be a metric space and $r>0$. We call a family $\mathcal{U}=\{U_i\}$ of subsets in $X$ $r-$\emph{disjoint}, if for any $U\neq U'$ in $\mathcal{U}$, $d(U,U')\geqslant r$, where $d(U,U')=\mathrm{inf}\{d(x,x'):x\in U,x'\in U'\}$. We write
$$X=\bigsqcup_{r-disjoint}U_i$$
for this. We call a family $\mathcal{V}$ \emph{uniformly bounded}, if sup$\{\mathrm{diam}(V):V\in\mathcal{V}\}$ is finite.

\begin{defn}[\cite{gromov1992asymptotic}]
Let $X$ be a metric space. We say that the \emph{asymptotic dimension} of $X$ doesn't exceed $n$ and write asdim$X\leqslant n$, if for every $r>0$, the space $X$ can be covered by $n+1$ subspaces $X_0,X_1,\cdots,X_n$, and each $X_i$ can be further decomposed into some $r-$disjoint uniformly bounded subspaces:
$$X=\bigcup^n_{i=0}X_i,\mbox{\quad}X_i=\bigsqcup_{\mbox{\scriptsize$\begin{array}{c} r-disjoint \\ j\in \mathbb{N} \end{array}$}}X_{ij} \mbox{~~and~~}\sup_{i,j}\mbox{diam}X_{ij}<\infty.$$
We say asdim$X=n$, if asdim$X\leqslant n$ and asdim$X$ is not less than $n$.
\end{defn}

We give some examples having finite asymptotic dimensions.
\begin{ex}[\cite{NY12}, \cite{roe2005hyperbolic}]
\begin{enumerate}[1)]
  \item asdim$(\mathbb{Z}^n)$ = $n$ for all $n\in \mathbb{N}$, where $\mathbb{Z}$ is the integer number;
  \item (J. Roe) Hyperbolic group in Gromov's sense has finite asymptotic dimension.
\end{enumerate}
\end{ex}

From the definition, it's easy to see that the asymptotic dimension of a subspace is not greater than that of the whole space. There are some other equivalent definitions for asymptotic dimension, but we are not going to focus on this and guide the readers to \cite{BD05} for reference. Now we introduce the notion of finite decomposition complexity which naturally generalizes finite asymptotic dimension.

\begin{defn}[\cite{GTY12}]\label{dec def}
A metric family $\mathcal{X}$ is called \emph{$r-$decomposable} over another metric family $\mathcal{Y}$ if for any $X\in\mathcal{X}$, there exists a decomposition:
$$X=X_0 \cup X_1,\mbox{\quad}X_i=\bigsqcup_{\mbox{\scriptsize$\begin{array}{c} r-disjoint \\ j\in \mathbb{N} \end{array}$}}X_{ij},$$
where $X_{ij}\in \mathcal{Y}$. It's denoted by $\mathcal{X}\stackrel{r}{\rightarrow}\mathcal{Y}$.
\end{defn}

\begin{defn}[\cite{GTY12}]\label{FDC def1}
Define:
\begin{itemize}
  \item $\mathcal{D}_0$ is the collection of all bounded families.
  \item For any ordinal number $\alpha>0$,
        $$\mathcal{D}_\alpha=\{\mathcal{X}:\forall r>0,\exists\beta<\alpha, \exists \mathcal{Y}\in\mathcal{D}_\beta, \mbox{~such that~}\mathcal{X}\stackrel{r}{\rightarrow}\mathcal{Y}\}.$$
\end{itemize}
We call a metric family $\mathcal{X}$ has \emph{finite decomposition complexity} (FDC) if there exists some ordinal number $\alpha$ such that $X\in \mathcal{D}_\alpha$. We say that a single metric space $X$ has FDC if $\{X\}$, viewed as a metric family, has FDC. In \cite{GTY12}, it has been proved that $X$ has finite asymptotic dimension if and only if there exists a non-negative integer number $n$, such that $X\in \mathcal{D}_n$.
\end{defn}

Now we introduce another equivalent definition of FDC by the decomposition game \cite{GTY12}.
Consider the following game with two players. Given a metric family $\mathcal{X}$, roughly speaking, the aim of player 1 is to decompose $\mathcal{X}$ into some bounded family, while player 2 tries to obstruct such decompositions.

More precisely, suppose $\mathcal{Y}_0=\mathcal{X}$.
In round 1, player 1 claims that he can decompose $\mathcal{Y}_0$ arbitrarily, and player 2 challenges him by giving a positive number $R_1>0$. If player 1 can find some metric family $\mathcal{Y}_1$ and $R_1-$decomposes $\mathcal{Y}_0$ over $\mathcal{Y}_1$, then the first round is over.
In round 2, player 1 again claims that he can decompose $\mathcal{Y}_1$ arbitrarily, and player 2 challenges him by giving another positive number $R_2>0$. If player 1 can find some metric family $\mathcal{Y}_2$ and $R_2-$decomposes $\mathcal{Y}_1$ over $\mathcal{Y}_2$, then the second round is over.
Generally, in round $i$, player 1 claims that he can decompose $\mathcal{Y}_{i-1}$ arbitrarily, and player 2 challenges him by giving a positive number $R_i>0$. If player 1 can find some metric family $\mathcal{Y}_i$ and $R_i-$decomposes $\mathcal{Y}_{i-1}$ over $\mathcal{Y}_i$, then the $i-$th round is over.

We say player 1 has a \emph{winning strategy} if he can get a bounded family after finite rounds no matter what numbers player 2 gives in each round. The following diagram shows player 1 wins at round $n$:

\begin{equation*}
     \begin{array}{cccccccccc}
     Player ~1 & \mathcal{X}=\mathcal{Y}_0 & \stackrel{R_1}{\longrightarrow} & \mathcal{Y}_1 & \stackrel{R_2}{\longrightarrow} & \cdots & \stackrel{R_{n-1}}{\longrightarrow} & \mathcal{Y}_{n-1} & \stackrel{R_n}{\longrightarrow} & \mathcal{Y}_n\\
     & & & & & & & & & \\
     Player ~2 & R_1 &  & R_2 &  & \cdots &  & R_n &  & \mbox{bounded family}\\
     \end{array}
\end{equation*}\\[-4mm]

FDC can be characterized by the above decompostion game. In fact, we have
\begin{prop}[\cite{GTY12}]\label{FDC equidef}
Let $\mathcal{X}$ be a metric family, then the followings are equivalent:
\begin{itemize}
  \item $\mathcal{X}$ has FDC in the sense of Definition \ref{FDC def1};
  \item $\mathcal{X}$ admits a winning strategy.
\end{itemize}
\end{prop}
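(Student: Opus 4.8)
The plan is to prove the two implications separately, in each case by transfinite induction, using the fact that the ordinals are well-ordered to keep every relevant play of the game finite. Suppose first that $\mathcal{X}$ has FDC, say $\mathcal{X}\in\mathcal{D}_\alpha$; I would show by transfinite induction on $\alpha$ that player 1 has a winning strategy. If $\alpha=0$ then $\mathcal{X}$ is already bounded and player 1 has won before the game starts. If $\alpha>0$, player 1 adopts the naive strategy: whenever player 2 issues a challenge $R>0$ against a family $\mathcal{Z}\in\mathcal{D}_\gamma$ with $\gamma>0$ currently on the table, Definition \ref{FDC def1} provides some $\gamma'<\gamma$ and some $\mathcal{Z}'\in\mathcal{D}_{\gamma'}$ with $\mathcal{Z}\stackrel{R}{\rightarrow}\mathcal{Z}'$, and player 1 plays $\mathcal{Z}'$. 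Following this strategy, the ordinals attached to the successive families form a strictly decreasing sequence $\alpha>\gamma_1>\gamma_2>\cdots$, which must therefore be finite; once it reaches $0$ the family on the table is bounded, so player 1 has won after finitely many rounds. The only thing to check is that a legal move always exists as long as the game is not over, which is exactly the defining property of $\mathcal{D}_\gamma$ for $\gamma>0$.

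Conversely, suppose player 1 has a winning strategy $\sigma$. I would encode the plays consistent with $\sigma$ as a rooted tree $T$: a node is a finite sequence $(R_1,\dots,R_i)$ of admissible challenges for which the play has not yet terminated, the children of a node are its one-step extensions, and to each node $v$ one attaches the family $\mathcal{Y}_v$ produced by $\sigma$ after the challenges recorded in $v$ (so $\mathcal{Y}_\emptyset=\mathcal{X}$, and $\mathcal{Y}_v\stackrel{R}{\rightarrow}\mathcal{Y}_{vR}$ whenever $vR$ is a child of $v$). Since $\sigma$ is winning, every play terminates after finitely many rounds, i.e. $T$ has no infinite branch, so $T$ is well-founded and carries the usual ordinal rank function $\rho$ with $\rho(v)=\sup\{\rho(w)+1:w\text{ a child of }v\}$ and $\rho(v)=0$ exactly at the leaves, where $\mathcal{Y}_v$ is bounded. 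A second transfinite induction, this time on $\rho(v)$, then yields $\mathcal{Y}_v\in\mathcal{D}_{\rho(v)}$: at a leaf this is $\mathcal{Y}_v\in\mathcal{D}_0$; at a node $v$ with $\rho(v)=\gamma>0$ and an arbitrary challenge $R$, one has $\mathcal{Y}_v\stackrel{R}{\rightarrow}\mathcal{Y}_{vR}$ with $\rho(vR)<\gamma$ and, by the inductive hypothesis, $\mathcal{Y}_{vR}\in\mathcal{D}_{\rho(vR)}$, which is precisely the condition defining $\mathcal{D}_\gamma$. Evaluating at the root gives $\mathcal{X}=\mathcal{Y}_\emptyset\in\mathcal{D}_{\rho(\emptyset)}$, so $\mathcal{X}$ has FDC.

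The substantive point, and the only real obstacle, lies in this converse direction: passing from ``each individual play terminates'' to a single ordinal bound. This is exactly where one must invoke the well-foundedness of the (infinitely branching) game tree $T$ and the existence of its ordinal rank function; granting that, everything else is bookkeeping. It is also harmless, and perhaps worth noting in passing, that the classes $\mathcal{D}_\beta$ are nested ($\mathcal{D}_\beta\subseteq\mathcal{D}_\alpha$ for $\beta\leqslant\alpha$, since any metric family is trivially $r$-decomposable over itself using a single piece), which reconciles the two inductions without any further care.
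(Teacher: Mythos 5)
Your argument is correct. Note that the paper itself gives no proof of this proposition --- it is quoted directly from [GTY12] --- so there is nothing internal to compare against; your two inductions (the naive strategy plus the impossibility of an infinite strictly decreasing sequence of ordinals for one direction, and the rank function on the well-founded game tree for the converse) are exactly the standard argument, and you correctly identify the converse, where termination of every individual play must be converted into a single ordinal bound via well-foundedness, as the only substantive step. The closing observation that $\mathcal{D}_\beta\subseteq\mathcal{D}_\alpha$ for $\beta\leqslant\alpha$ (because any family is trivially $r$-decomposable over itself) is true and, as you say, not actually needed anywhere.
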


Next, we introduce some coarse permanence properties of asymptotic dimension and FDC. First we recall some basic concepts for metric families from coarse geometry. They have some well-known analogues in the case of metric space (see \cite{NY12} for example).

\begin{defn}[\cite{GTY12}]
Let $\mathcal{X},\mathcal{Y}$ be metric families.
\begin{itemize}
  \item A \emph{subspace} of $\mathcal{X}$ is a family $\mathcal{Z}$, and each element in $\mathcal{Z}$ is a subspace of some element in $\mathcal{X}$;
  \item \emph{A map of families} $\mathcal{F}$ from $\mathcal{X}$ to $\mathcal{Y}$, denoted by $\mathcal{F}: \mathcal{X} \rightarrow \mathcal{Y}$, is a collection of functions, such that each $f \in \mathcal{F}$ maps some $X$ in $\mathcal{X}$ to some $Y$ in $\mathcal{Y}$ (usually denoted by $f:X_f \rightarrow Y_f$), and each $X$ in $\mathcal{X}$ is the domain of at least one $f$ in $\mathcal{F}$;
  \item Let $\mathcal{F}: \mathcal{X} \rightarrow \mathcal{Y}$ be a map of families. The \emph{inverse image} of the subspace $\mathcal{Z}$ in $\mathcal{Y}$ is the collection $\mathcal{F}^{-1}(\mathcal{Z})=\{f^{-1}(Z): Z \in \mathcal{Z},f\in \mathcal{F}\}$;
\end{itemize}
\end{defn}

\begin{defn}[\cite{GTY12}]
Let $\mathcal{X},\mathcal{Y}$ be metric families, and $\mathcal{F}: \mathcal{X} \rightarrow \mathcal{Y}$ be a map of families.
\begin{itemize}
  \item $\mathcal{F}$ is called \emph{uniformly expansive} if there exists a non-decreasing proper function $\rho_1: \mathbb{R^+}\rightarrow \mathbb{R}$ such that for every $f\in \mathcal{F}$, and $x,x'\in X_f$,
      $$d_{Y_f}(f(x),f(x'))\leqslant \rho_1(d_{X_f}(x,x')).$$ When $\mathcal{F}=\{f\}$ consists of only one element, we also call $f$ \emph{bornologous};
  \item $\mathcal{F}$ is called \emph{effectively proper} if there exists a non-decreasing proper function $\rho_2: \mathbb{R^+}\rightarrow \mathbb{R}$ such that for every $f\in \mathcal{F}$, and $x,x'\in X_f$,
      $$\rho_2(d_{X_f}(x,x'))\leqslant d_{Y_f}(f(x),f(x')).$$ When $\mathcal{F}=\{f\}$ consists of only one element, we also call $f$ \emph{effectively proper};
  \item $\mathcal{F}$ is called a \emph{coarse embedding}, if it is both uniformly expansive and effectively proper.
  \item A coarse embedding $\mathcal{F}$ is called a \emph{coarse equivalence}, if each $f:X_f \rightarrow Y_f$ in $\mathcal{F}$ admits some $g_f: Y_f \rightarrow X_f$ and the family $\mathcal{G}=\{g_f\}$ satisfies two conditions:
      \begin{enumerate}[(1)]
        \item $\mathcal{G}$ is a coarse embedding $\mathcal{Y} \rightarrow \mathcal{X}$;
        \item there exists some constant $C>0$ such that for any $f$, and $x\in X_f$, $y\in Y_f$, we have $d(x,g_f f(x))\leqslant C$ and $d(y,f g_f(y))\leqslant C$.
      \end{enumerate}
\end{itemize}
\end{defn}

Asymptotic dimension and FDC are coarsely invariant. More precisely, we have the following two propositions.
\begin{prop}[\cite{GTY12},\cite{BD05}]\label{coarse invariance of asdim}
~\\[-0.5cm]
\begin{itemize}
  \item[(1)] Suppose two metric spaces $X$ and $Y$ are coarsely equivalent, then asdim$X$ = asdim$Y$;
  \item[(2)] Suppose two metric families $\mathcal{X}$ and $\mathcal{Y}$ are coarsely equivalent, then $\mathcal{X}$ has FDC if and only if $\mathcal{Y}$ has FDC.
\end{itemize}
 \end{prop}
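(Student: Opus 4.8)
The plan is to deduce both statements from a single monotonicity fact: \emph{if $\mathcal{F}\colon\mathcal{X}\to\mathcal{Y}$ is a coarse embedding of metric families and $\mathcal{Y}\in\mathcal{D}_\alpha$, then $\mathcal{X}\in\mathcal{D}_\alpha$}, together with its single-space counterpart, \emph{a coarse embedding $f\colon X\to Y$ forces $\mathrm{asdim}\,X\leqslant\mathrm{asdim}\,Y$}. Granting these, the proposition is immediate: by definition of coarse equivalence $\mathcal{F}$ is a coarse embedding whose coarse inverse is again a coarse embedding, and a coarse equivalence of spaces likewise provides coarse embeddings in both directions, so applying the monotonicity fact twice gives $\mathrm{asdim}\,X=\mathrm{asdim}\,Y$ in (1) and, since having FDC means lying in some $\mathcal{D}_\alpha$, the claimed equivalence in (2).

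Everything rests on one pull-back construction. Fix $r>0$, let $\rho_1$ and $\rho_2$ be the expansion and properness functions of $\mathcal{F}$, and put $t:=\rho_1(r)+1$. If $f\colon X_f\to Y_f$ lies in $\mathcal{F}$ and $Y_f=Y_0\cup Y_1$ with each $Y_i=\bigsqcup_{t-disjoint}Y_{ij}$, then $X_f=f^{-1}(Y_0)\cup f^{-1}(Y_1)$ with $f^{-1}(Y_i)=\bigsqcup_{j}f^{-1}(Y_{ij})$, and two elementary checks make this a decomposition with parameter $r$ landing in a controlled family. First, the pieces $f^{-1}(Y_{ij})$ are $r$-disjoint: if $x\in f^{-1}(Y_{ij})$, $x'\in f^{-1}(Y_{ij'})$ with $d(x,x')<r$, then $d(f(x),f(x'))\leqslant\rho_1(r)<t$, contradicting the $t$-disjointness of $Y_{ij}$ and $Y_{ij'}$. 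Second, if the target family is uniformly bounded by $D$, then any $x,x'$ in a single pull-back piece satisfy $\rho_2(d(x,x'))\leqslant d(f(x),f(x'))\leqslant D$, hence $d(x,x')\leqslant\sup\{u:\rho_2(u)\leqslant D\}$, a quantity finite and independent of $f$ because $\rho_2$ is proper. The second check is exactly the base case $\alpha=0$; the first is the engine of the inductive step.

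For (2) I would induct on $\alpha$ (the definition of $\mathcal{D}_\alpha$ treats successor and limit ordinals uniformly, so there is only one case). Assume the fact for all $\beta<\alpha$, let $\mathcal{Y}\in\mathcal{D}_\alpha$, and fix $r>0$; with $t=\rho_1(r)+1$ choose $\beta<\alpha$ and $\mathcal{Z}\in\mathcal{D}_\beta$ with $\mathcal{Y}\stackrel{t}{\rightarrow}\mathcal{Z}$. Pulling the $t$-decompositions back through the members of $\mathcal{F}$ as above yields $\mathcal{X}\stackrel{r}{\rightarrow}\mathcal{F}^{-1}(\mathcal{Z})$, and $\mathcal{F}$ restricts to a coarse embedding $\mathcal{F}^{-1}(\mathcal{Z})\to\mathcal{Z}$ with the \emph{same} $\rho_1,\rho_2$ (each member of $\mathcal{F}^{-1}(\mathcal{Z})$ is $f^{-1}(Z)$ for some $f\in\mathcal{F}$, $Z\in\mathcal{Z}$, and $f$ restricts there to a map into $Z$); the inductive hypothesis then gives $\mathcal{F}^{-1}(\mathcal{Z})\in\mathcal{D}_\beta$, so $\mathcal{X}\in\mathcal{D}_\alpha$ since $r$ was arbitrary. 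Equivalently, via Proposition~\ref{FDC equidef}, one transfers a winning strategy for $\mathcal{Y}$ to one for $\mathcal{X}$ by answering each challenge $R_i$ against $\mathcal{X}$ with the move the $\mathcal{Y}$-strategy prescribes for the challenge $\rho_1(R_i)+1$, pulled back. Part (1) is the same pull-back with no ordinals: decompose $Y$ into $n+1$ colours, each a $t$-disjoint union of uniformly bounded sets witnessing $\mathrm{asdim}\,Y\leqslant n$, pull all of it back through $f$, and read off $\mathrm{asdim}\,X\leqslant n$; this is the classical argument, cf.\ \cite{BD05}.

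I expect no genuine obstacle here — the content is quantifier bookkeeping — but the points that do require care are: that the sets $f^{-1}(Y_i)$ actually cover $X_f$ (clear, since preimage commutes with union and every $X\in\mathcal{X}$ is the domain of some $f\in\mathcal{F}$); that it is \emph{properness} of $\rho_2$, not mere monotonicity, which keeps the diameter bound uniform over the whole family rather than over one space; and that restricting a coarse embedding to the inverse image of a subfamily again yields a coarse embedding onto that subfamily with unchanged control functions — it is this last observation that lets the induction close.
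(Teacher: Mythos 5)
The paper offers no proof of this proposition; it is quoted as a known result from \cite{GTY12} and \cite{BD05}, and your argument is essentially the standard one given there (coarse embeddings pull decompositions back, with $\rho_1$ controlling disjointness, $\rho_2$ controlling boundedness, and transfinite induction on $\alpha$ for FDC). Your proof is correct; the one point worth making explicit in a write-up is the observation you already flag at the end, namely that $\mathcal{F}$ restricts to a coarse embedding $\mathcal{F}^{-1}(\mathcal{Z})\to\mathcal{Z}$ with the same control functions, since that is what allows the induction hypothesis to be applied to the whole inverse-image family rather than to each pulled-back piece separately.
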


We also have the following proposition for the subspace case.
\begin{prop}[\cite{GTY12},\cite{BD05}]\label{subspace for asdim}
~\\[-0.5cm]
\begin{itemize}
  \item[(1)] If $X$ is a subspace of some metric space $Y$, then $asdimX\leqslant asdimY$;
  \item[(2)] If $\mathcal{X}$ is a subspace of some metric family $\mathcal{Y}$ with FDC, then $\mathcal{X}$ also has FDC.
\end{itemize}
\end{prop}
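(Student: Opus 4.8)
The plan is to treat both parts by the same elementary mechanism: an $r$-disjoint, uniformly bounded (resp.\ $\mathcal{Y}$-valued) decomposition of a space restricts to any subspace, and distances can only grow when passing to subsets, so $r$-disjointness is preserved under intersection.

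For (1), suppose $\mathrm{asdim}\,Y\leqslant n$ and let $X\subseteq Y$. Given $r>0$, write $Y=\bigcup_{i=0}^n Y_i$ with $Y_i=\bigsqcup_{r-disjoint}Y_{ij}$ and $\sup_{i,j}\mathrm{diam}\,Y_{ij}<\infty$. Set $X_i=X\cap Y_i$ and $X_{ij}=X\cap Y_{ij}$. Then $X=\bigcup_{i=0}^n X_i$, each $X_i=\bigsqcup_j X_{ij}$, we have $\mathrm{diam}\,X_{ij}\leqslant\mathrm{diam}\,Y_{ij}$, and for $j\neq j'$ one has $d(X_{ij},X_{ij'})\geqslant d(Y_{ij},Y_{ij'})\geqslant r$. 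Hence $\mathrm{asdim}\,X\leqslant n$.

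For (2), I would argue by transfinite induction on the ordinal $\alpha$, proving the statement: if $\mathcal{Y}\in\mathcal{D}_\alpha$ and $\mathcal{X}$ is a subspace of $\mathcal{Y}$, then $\mathcal{X}\in\mathcal{D}_\alpha$. The base case $\alpha=0$ is immediate since a subspace of a bounded family is bounded. For the inductive step, fix $r>0$; by Definition \ref{FDC def1} there are $\beta<\alpha$ and $\mathcal{W}\in\mathcal{D}_\beta$ with $\mathcal{Y}\stackrel{r}{\rightarrow}\mathcal{W}$. For each $X\in\mathcal{X}$ pick $Y\in\mathcal{Y}$ with $X\subseteq Y$ and a decomposition $Y=Y_0\cup Y_1$, $Y_i=\bigsqcup_{r-disjoint}Y_{ij}$, $Y_{ij}\in\mathcal{W}$; arguing as in (1), $X=(X\cap Y_0)\cup(X\cap Y_1)$ with each $X\cap Y_i$ an $r$-disjoint union of the sets $X\cap Y_{ij}$, which are subspaces of elements of $\mathcal{W}$. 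Letting $\mathcal{W}'$ be the collection of all these $X\cap Y_{ij}$ as $X$ ranges over $\mathcal{X}$, we get $\mathcal{X}\stackrel{r}{\rightarrow}\mathcal{W}'$ with $\mathcal{W}'$ a subspace of $\mathcal{W}\in\mathcal{D}_\beta$. The inductive hypothesis then yields $\mathcal{W}'\in\mathcal{D}_\beta$, and since $r$ was arbitrary, $\mathcal{X}\in\mathcal{D}_\alpha$. Taking $\alpha$ with $\mathcal{Y}\in\mathcal{D}_\alpha$ gives the claim.

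I do not expect a genuine obstacle here; the only points requiring care are bookkeeping — that a subspace of a subspace is again a subspace in the sense of the earlier definition, that an $r$-disjoint family remains $r$-disjoint after intersecting with a fixed subset, and that the ordinal $\beta$ (allowed to depend on $r$) causes no trouble in the induction. An alternative phrasing of (2) avoids explicit transfinite induction: by Proposition \ref{FDC equidef} a winning strategy for $\mathcal{Y}$ in the decomposition game restricts, round by round, to a winning strategy for $\mathcal{X}$, since each move $\mathcal{Y}_{i-1}\stackrel{R_i}{\rightarrow}\mathcal{Y}_i$ restricts to $\mathcal{X}_{i-1}\stackrel{R_i}{\rightarrow}\mathcal{X}_i$ with $\mathcal{X}_i$ a subspace of $\mathcal{Y}_i$, and a subspace of a bounded family is bounded; this hides the same bookkeeping inside the game formalism.
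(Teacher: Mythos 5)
Your argument is correct, and it is the standard one: restricting a decomposition to a subset preserves $r$-disjointness and can only decrease diameters, with part (2) handled either by transfinite induction on $\mathcal{D}_\alpha$ or by restricting a winning strategy in the decomposition game. The paper itself states this proposition as a citation to \cite{GTY12} and \cite{BD05} without giving a proof, so there is nothing to compare against beyond noting that your bookkeeping (a subspace of a subspace is a subspace, a subspace of a bounded family is bounded, and the $r$-dependent ordinal $\beta$ causes no difficulty) matches the argument in those references.
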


We state another permanence property called the fibering theorem.
\begin{prop}[\cite{GTY12}]\label{fibering}
Let $\mathcal{X}, \mathcal{Y}$ be metric families and $\mathcal{F}:\mathcal{X}\rightarrow \mathcal{Y}$ be a uniformly expansive map. Assume $\mathcal{Y}$ has FDC, and for any bounded subspace $\mathcal{Z}$ in $\mathcal{Y}$, the inverse image $\mathcal{F}^{-1}(\mathcal{Z})$ has FDC. Then $\mathcal{X}$ also has FDC.
\end{prop}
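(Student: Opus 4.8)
The plan is to verify the game characterization of Proposition~\ref{FDC equidef}: I will exhibit a winning strategy for player~1 on $\mathcal{X}$ by borrowing a winning strategy on $\mathcal{Y}$, pulling its moves back through $\mathcal{F}$ round by round, and --- once the borrowed strategy has reduced $\mathcal{Y}$ to a bounded family --- switching to a winning strategy on the inverse image of that bounded family.

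The first ingredient is an elementary pullback lemma. Fix a control function $\rho$ for the uniformly expansive map $\mathcal{F}$. I claim that if $\mathcal{Y}$ can be $r'$-decomposed over some family $\mathcal{W}$ with $r'>\rho(r)$, then $\mathcal{X}$ can be $r$-decomposed over $\mathcal{F}^{-1}(\mathcal{W})$. Indeed, given $X\in\mathcal{X}$, pick $f\in\mathcal{F}$ with domain $X$; apply the $r'$-decomposition to $Y_f$, writing $Y_f=Y_0\cup Y_1$ with each $Y_i$ an $r'$-disjoint union of sets $Y_{ij}\in\mathcal{W}$, and pull back to $X=f^{-1}(Y_0)\cup f^{-1}(Y_1)$, $f^{-1}(Y_i)=\bigcup_j f^{-1}(Y_{ij})$. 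Each $f^{-1}(Y_{ij})$ lies in $\mathcal{F}^{-1}(\mathcal{W})$, and if $x\in f^{-1}(Y_{ij})$, $x'\in f^{-1}(Y_{ik})$ with $j\neq k$, then $d(f(x),f(x'))\geqslant r'>\rho(r)$, which (as $\rho$ is non-decreasing) forces $d(x,x')\geqslant r$; so the pieces $f^{-1}(Y_{ij})$ are $r$-disjoint. Moreover the restrictions $f|_{f^{-1}(Y_{ij})}\colon f^{-1}(Y_{ij})\to Y_{ij}$ assemble, as $f$ ranges over the chosen maps, into a uniformly expansive map of families with the same control function $\rho$ (passing to subspaces does not increase distances), from the subspace of $\mathcal{F}^{-1}(\mathcal{W})$ just produced onto a subspace of $\mathcal{W}$.

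With this in hand I would run the game as follows. Let $\Sigma$ be a winning strategy for player~1 on $\mathcal{Y}$ (which exists since $\mathcal{Y}$ has FDC). When player~2 challenges $\mathcal{X}_0:=\mathcal{X}$ with $R_1>0$, player~1 picks $R_1'>\rho(R_1)$, feeds $R_1'$ to $\Sigma$ to obtain $\mathcal{Y}_0:=\mathcal{Y}\stackrel{R_1'}{\to}\mathcal{Y}_1$, and uses the pullback lemma to answer $\mathcal{X}_0\stackrel{R_1}{\to}\mathcal{X}_1$, keeping track of the induced uniformly expansive map $\mathcal{F}_1\colon\mathcal{X}_1\to\mathcal{Y}_1$ with control $\rho$. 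Iterating this, a challenge $R_i$ in round $i$ is met by feeding some $R_i'>\rho(R_i)$ to $\Sigma$ (continuing the same $\mathcal{Y}$-play) and pulling back; an easy induction shows that at each stage $\mathcal{X}_i$ is a subspace of $\mathcal{F}^{-1}(\mathcal{Y}_i)$ and $\mathcal{Y}_i$ is a subspace of $\mathcal{Y}$. Since $\Sigma$ is winning, after some finite number $n$ of rounds the family $\mathcal{Y}_n$ is bounded; being a subspace of $\mathcal{Y}$ it is a bounded subspace, so by hypothesis $\mathcal{F}^{-1}(\mathcal{Y}_n)$ has FDC, and hence so does its subspace $\mathcal{X}_n$ by Proposition~\ref{subspace for asdim}. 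By Proposition~\ref{FDC equidef}, player~1 has a winning strategy $\Sigma'$ on $\mathcal{X}_n$; from round $n+1$ onward he follows $\Sigma'$ and reaches a bounded family in finitely many further rounds. This is a winning strategy on $\mathcal{X}$, so $\mathcal{X}$ has FDC.

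The genuinely routine parts are the distance estimate in the pullback lemma and checking that the restricted maps stay uniformly expansive with one fixed control function. The point I would be most careful about is the termination step: I must make sure that the family $\mathcal{X}_n$ player~1 has actually reached is a subspace of $\mathcal{F}^{-1}(\mathcal{Y}_n)$ for the \emph{original} $\mathcal{F}$ --- not merely for the $i$-fold restricted maps --- and that $\mathcal{Y}_n$ is genuinely a bounded subspace of $\mathcal{Y}$, so that the hypothesis on inverse images of bounded subspaces and the subspace permanence property (Proposition~\ref{subspace for asdim}) apply verbatim.
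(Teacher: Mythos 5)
Your argument is correct: the pullback lemma, the round-by-round transfer of a winning strategy from $\mathcal{Y}$ to $\mathcal{X}$, and the switch to a winning strategy on $\mathcal{F}^{-1}(\mathcal{Y}_n)$ once the borrowed play terminates in a bounded subspace of $\mathcal{Y}$ are all sound, and the point you flag (that $\mathcal{X}_n$ is literally a subfamily of $\mathcal{F}^{-1}(\mathcal{Y}_n)$ for the original $\mathcal{F}$, since the successive $\mathcal{Y}$-pieces are nested) does check out. The paper states this proposition without proof, citing \cite{GTY12}, and your proof is essentially the standard game-theoretic argument given there.
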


Finally, we recall the notion of straight FDC (for abbreviation, sFDC). It was introduced by A. Dranishnikov and M. Zarichnyi \cite{DZ14} to compare FDC with asymptotic property C, which is a large scale version of the classical Haver's property C.
\begin{defn}[\cite{DZ14}]
A metric family $\mathcal{X}$ is said to have \emph{straight finite decomposition complexity} (sFDC), if for any increasing sequence $R_1 < R_2 < \cdots < R_n < \cdots$, there exists an integer number $m$ and a sequence of decompositions:
$$\mathcal{X}  \stackrel{R_1}{\longrightarrow}  \mathcal{Y}_1  \stackrel{R_2}{\longrightarrow}  \cdots  \stackrel{R_{m-1}}{\longrightarrow}  \mathcal{Y}_{m-1}  \stackrel{R_m}{\longrightarrow}  \mathcal{Y}_m$$
such that the family $\mathcal{Y}_m$ is bounded.
\end{defn}

Please compare the definition of FDC by decomposition game (Proposition \ref{FDC equidef}) and the one of sFDC here very carefully, to see the subtle difference. It's obvious that FDC implies sFDC. However, whether they are equivalent or not are still unknown up till now.

A. Dranishnikov and M. Zarichnyi have proved that sFDC is a coarse invariant, and they have also proved it implies Yu's Property A.
\begin{prop}[\cite{DZ14}]\label{sFDC imp A}
Let $X$ be a metric space with straight FDC. Then $X$ has Property A.
\end{prop}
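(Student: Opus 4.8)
The plan is to use the measure-theoretic characterisation of Property A: a metric space $Z$ has Property A if and only if for every $R>0$ and every $\varepsilon>0$ there are a constant $S<\infty$ and an assignment $z\mapsto\xi_z$ of finitely supported probability measures on $Z$ with $\mathrm{supp}\,\xi_z\subseteq B(z,S)$ and $\|\xi_z-\xi_{z'}\|_1\leqslant\varepsilon$ whenever $d(z,z')\leqslant R$. Call a triple $(R,\varepsilon,S)$ \emph{admissible} for a metric family $\mathcal Z$ if every member of $\mathcal Z$ carries such data with one common support radius $S$. Two observations get things started: a bounded family of diameter $\leqslant D$ is admissible for $(R,0,D)$ \emph{at every scale} $R$ (in each member pick a point $z_0$ and take the constant assignment $\xi_z=\delta_{z_0}$); and $X$ has Property A precisely when $\{X\}$ has admissible triples $(R,\varepsilon,S)$ with $R$ arbitrarily large and $\varepsilon$ arbitrarily small. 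So it is enough to transport admissibility through a finite iterated decomposition, which is exactly what sFDC supplies once the decomposition parameters are chosen wisely.

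The heart of the matter is a single gluing step: if $\mathcal X\stackrel{r}{\rightarrow}\mathcal Y$ and $(\rho,c,S)$ is admissible for $\mathcal Y$, then for every $\eta>0$ the triple $\bigl(\min(r/6,\ \rho-2r/3,\ \eta r/C),\ c+\eta,\ S+r/3\bigr)$ is admissible for $\mathcal X$, where $C$ is a universal constant. To prove this, given $X=X_0\cup X_1$ with $X_i=\bigsqcup_{r\text{-disjoint}}X_{ij}$ and $X_{ij}\in\mathcal Y$, first push the $\mathcal Y$-data of $X_{ij}$ onto the enlarged piece $N(X_{ij},r/3)$ along any map sending a point to a point of $X_{ij}$ within $r/3$ of it; since this moves points by at most $r/3$, the enlarged pieces become admissible for $(\rho-2r/3,\ c,\ S+r/3)$, and those within one layer stay pairwise $(r/3)$-separated, hence disjoint. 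Then form $\phi_i=\max(0,1-\tfrac{6}{r}\,d(\cdot,X_i))$; as $X_0\cup X_1=X$ one has $\phi_0+\phi_1\geqslant1$, so the $\tilde\phi_i:=\phi_i/(\phi_0+\phi_1)$ give an $O(1/r)$-Lipschitz partition of unity with $\tilde\phi_i$ supported in $N(X_i,r/6)$. Setting $\xi_x:=\sum_i\tilde\phi_i(x)\,\nu^i_x$, where $\nu^i_x$ is the pushed datum on the (unique) enlarged layer-$i$ piece through $x$, and observing that for $d(x,y)$ below all three displayed bounds $x$ and $y$ fall in the same enlarged piece in each layer, a routine partition-of-unity estimate gives $\|\xi_x-\xi_y\|_1\leqslant c+\sum_i|\tilde\phi_i(x)-\tilde\phi_i(y)|\leqslant c+C\,d(x,y)/r\leqslant c+\eta$, with support radius $S+r/3$.

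For the assembly, fix $R>0$ and $\varepsilon>0$, set $\eta_k=\varepsilon\,2^{-k-1}$, $\beta_0=R$, and define a strictly increasing sequence by
\[
R_k=\max\!\Bigl(6\beta_{k-1},\ \tfrac{C\beta_{k-1}}{\eta_k},\ R_{k-1}+1\Bigr),\qquad \beta_k=R+\tfrac23\sum_{j=1}^{k}R_j ,
\]
so that $\beta_k-\tfrac23R_k=\beta_{k-1}$, $R_k/6\geqslant\beta_{k-1}$ and $\eta_kR_k/C\geqslant\beta_{k-1}$; note $(R_k)$ depends only on $R,\varepsilon$. Feed $\{R_k\}$ into the definition of sFDC: there are an $m\in\mathbb N$ and decompositions $\{X\}=\mathcal Y_0\stackrel{R_1}{\rightarrow}\mathcal Y_1\stackrel{R_2}{\rightarrow}\cdots\stackrel{R_m}{\rightarrow}\mathcal Y_m$ with $\mathcal Y_m$ bounded, of diameter $\leqslant D_m$. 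Starting from the constant-measure data on $\mathcal Y_m$ (admissible for $(\infty,0,D_m)$) and applying the gluing step to $\mathcal Y_{k-1}\stackrel{R_k}{\rightarrow}\mathcal Y_k$ with parameter $\eta_k$ for $k=m,m-1,\dots,1$, a downward induction shows $\mathcal Y_k$ is admissible for $(\sigma_k,c_k,D_k)$ with $\sigma_k\geqslant\beta_k$, $c_k=\sum_{j>k}\eta_j$, and $D_k$ finite (it increases by $R_k/3$ only finitely many times). In particular $\{X\}=\mathcal Y_0$ is admissible for some $(\sigma_0,c_0,D_0)$ with $\sigma_0\geqslant\beta_0=R$, $c_0=\sum_{j=1}^{m}\eta_j<\varepsilon$ and $D_0<\infty$. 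Since $R,\varepsilon$ were arbitrary, $X$ has Property A.

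The one real subtlety — and the step I expect to be the main obstacle — is that the disjointness scales $R_k$ must be fixed \emph{before} the length $m$ of the sFDC decomposition is known. Each enlargement costs roughly $r$ in the variation scale, and that cost grows as the scales grow, so the scales cannot simply be taken constant; the self-referential, geometrically growing recursion through the auxiliary quantities $\beta_k$ is designed precisely so that these cumulative losses are absorbed while the errors $\eta_k$ are summable by fiat. The remaining ingredients — the coarse partition of unity, the behaviour of the pushed-forward measures on overlaps, and the uniform control of support radii across finitely many steps — are routine bookkeeping. One could also try to phrase the gluing step as a closure property of the family of ``Property A families'' under $r$-decomposition, but some quantitative tracking of the three parameters seems unavoidable in any case.
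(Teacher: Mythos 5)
Your argument is correct, and in fact the paper gives no proof of this statement at all --- it is quoted from [DZ14] --- so there is nothing internal to compare it against; your route (fixing a rapidly growing scale sequence in advance so that the cumulative losses from the finitely many, but unknown in number, gluing steps are absorbed, then building the Property A measures by downward induction with a coarse partition of unity) is essentially the standard argument of Dranishnikov--Zarichnyi. You correctly isolate the genuine subtlety distinguishing sFDC from FDC here, namely that the $R_k$ must be chosen before $m$ is known, and your recursion through the $\beta_k$ handles it.
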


\subsection{The case of groups}
Now we turned to the case of groups.

Let $G$ be a discrete group equipped with a proper length function $l$. Here "proper" means that any ball with finite radius with respect to $l$ in $G$ contains finitely many elements. Then there exists a left invariant metric $d$ on $G$ defined by $d(g,h)=l(g^{-1}h)$, where $g,h \in G$. It is a well known fact by M. Gromov \cite{gromov1992asymptotic} that for any two proper length functions $l_1,l_2$ on $G$, the identity map $id:(G,d_1) \rightarrow (G,d_2)$ is a coarsely equivalence. If $H$ is a subgroup in $G$, denoted by $H\leqslant G$, then we equip $H$ with the induced metric as a subspace of $G$.

Now let $N$ be a normal subgroup in $G$, denoted by $N \lhd G$. Define the quotient metric on $G/N$ by $d([g],[h])=d_G(gN,hN)=\min\limits_{n_1,n_2 \in N}d_G(gn_1,hn_2)$, where $[g],[h]$ are the images of $g,h \in G$ under the natural projection $\pi: G \rightarrow G/N$. Since $N$ is normal in $G$ and the metric $d_G$ is left invariant, we have:
$$d([g],[h])=d_G(gN,hN)=d_G(h^{-1}gN,N)=d_G(Nh^{-1}g,N)=d_G(h^{-1}g,N).$$

There is another equivalent way to define the quotient metric as follow. First define a length function $\bar{l}$ on $G/N$ by
$$\bar{l}([g]):=\min_{h\in G,[h]=[g]}l(h).$$
We claim the quotient metric on $G/N$ is the left invariant metric induced by $\bar{l}$: $d([g],[h])=\bar{l}([h]^{-1}[g])$.
In fact,
$$\bar{l}([h]^{-1}[g])=\bar{l}([h^{-1}g])=\min\limits_{n\in N}l(n^{-1}h^{-1}g)=\min\limits_{n\in N}d(h^{-1}g,n)=d_G(h^{-1}g,N)=d_G(gN,hN).$$

We put an easy but useful lemma here, which is important in our further proofs.

\begin{lem}\label{ball}
Let $G,N,\pi:G \rightarrow G/N$ and $l,d$ be as above. Then for any $R>0$,
$$\pi\Big(B_G(1_G,R)\Big)=B_{G/N}(1_{G/N},R).$$
Here we use the notation $B_X(x,R)$ to denote the open ball in a metric space $X$ centered at $x$ and with radius $R$.
\end{lem}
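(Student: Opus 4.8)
The plan is to prove the set equality $\pi\big(B_G(1_G,R)\big)=B_{G/N}(1_{G/N},R)$ by establishing the two inclusions separately, using the description of the quotient length $\bar l$ via $\bar l([g])=\min_{h\in G,\,[h]=[g]}l(h)$ that was set up just before the lemma.

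For the inclusion $\pi\big(B_G(1_G,R)\big)\subseteq B_{G/N}(1_{G/N},R)$, I would take $g\in B_G(1_G,R)$, so $l(g)=d(g,1_G)<R$. Since $\bar l([g])$ is the minimum of $l(h)$ over all lifts $h$ of $[g]$ and $g$ is one such lift, we get $\bar l([g])\leqslant l(g)<R$, hence $d_{G/N}([g],1_{G/N})=\bar l([g])<R$, i.e. $\pi(g)=[g]\in B_{G/N}(1_{G/N},R)$. This direction is essentially immediate and needs no work beyond unwinding definitions.

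For the reverse inclusion $B_{G/N}(1_{G/N},R)\subseteq \pi\big(B_G(1_G,R)\big)$, I would take $[g]\in B_{G/N}(1_{G/N},R)$, so $\bar l([g])<R$. By properness of $l$, the ball $B_G(1_G,R)$ is finite, so the minimum defining $\bar l([g])$ is attained: there exists $h\in G$ with $[h]=[g]$ and $l(h)=\bar l([g])<R$. Then $h\in B_G(1_G,R)$ and $\pi(h)=[h]=[g]$, so $[g]\in\pi\big(B_G(1_G,R)\big)$. (Alternatively, even without invoking properness, for any $\epsilon>0$ one can pick a lift $h$ with $l(h)<\bar l([g])+\epsilon$, and choosing $\epsilon<R-\bar l([g])$ gives $l(h)<R$; this avoids the attainment issue entirely, which is cleaner.)

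There is no real obstacle here; the only point requiring a moment's care is that the inclusion "$\supseteq$" genuinely uses the \emph{open} ball convention stated in the lemma (the strict inequality $\bar l([g])<R$ leaves room to choose a lift with $l(h)<R$), so I would make sure the argument is phrased with strict inequalities throughout and note that the analogous statement for closed balls would follow the same way once properness guarantees the minimum is attained. I would present the proof in two short paragraphs mirroring the two inclusions above.
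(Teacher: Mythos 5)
Your proof is correct and follows essentially the same route as the paper: the forward inclusion is the observation that $\pi$ is contracting (equivalently $\bar l([g])\leqslant l(g)$), and the reverse inclusion extracts from $\bar l([g])<R$ a lift $h$ of $[g]$ with $l(h)<R$, exactly as the paper does by choosing $n\in N$ with $d(g,n)<R$ and taking $n^{-1}g$. Your remark on attainment of the minimum via properness (or the $\epsilon$-workaround) is a harmless extra precaution; no further changes needed.
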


\begin{proof}
Since the quotient map is contracting, we have $\pi\Big(B_G(1_G,R)\Big)\subseteq B_{G/N}(1_{G/N},R)$.

On the other hand, for any element $[g]$ in $B_{G/N}(1_{G/N},R)$, i.e. $\bar{l}([g])< R$, by definition, $\exists n\in N$, such that $d(g,n)<R$. So $l(n^{-1}g)<R$ and $\pi(n^{-1}g)=\pi(g)$. In other words, $[g]\in \pi\Big(B_G(1_G,R)\Big)$.
\end{proof}

We have the following criterion for bornologous maps:
\begin{lem}\label{bornol}
Let $(G,l_G),(H,l_H)$ be two discrete groups equipped with proper length functions. Suppose $\phi:G\rightarrow H$ is a homomorphism, and there exists a proper function $\rho_+:[0,+\infty) \rightarrow \mathbb{R}$ satisfying: for any $R>0$,
$$\phi\Big(B_G(1_G,R)\Big) \subseteq B_H(1_H,\rho_+(R))\Big.$$
Then $\phi$ is bornologous.
\end{lem}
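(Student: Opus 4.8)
The plan is to use left-invariance of the two metrics together with the fact that $\phi$ is a homomorphism in order to reduce the whole statement to an estimate between the length functions, and then to regularize the given function $\rho_+$ into an honest, non-decreasing, proper control function.

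First I would record the identity that does all the work. Since $d_G$ and $d_H$ are the left-invariant metrics induced by $l_G$ and $l_H$, and $\phi$ is a homomorphism, for any $x,x'\in G$ one has
\[
d_G(x,x')=l_G(x^{-1}x')\quad\text{and}\quad d_H(\phi(x),\phi(x'))=l_H\big(\phi(x)^{-1}\phi(x')\big)=l_H\big(\phi(x^{-1}x')\big).
\]
Thus it is enough to control $l_H(\phi(g))$ in terms of $l_G(g)$ for a single element $g\in G$. Putting $g=x^{-1}x'$ and $r=l_G(g)=d_G(x,x')$, we have $g\in B_G(1_G,r+1)$ (the extra $1$ accounts for the balls being open), hence by hypothesis $\phi(g)\in B_H(1_H,\rho_+(r+1))$, i.e. $l_H(\phi(g))<\rho_+(d_G(x,x')+1)$. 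Combined with the identity above this yields $d_H(\phi(x),\phi(x'))<\rho_+(d_G(x,x')+1)$ for all $x,x'\in G$.

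It remains to turn $\rho_+(\,\cdot\,+1)$ into a genuinely non-decreasing proper function, as the definition of bornologous demands. Since $l_G$ is proper, the ball $B_G(1_G,r+1)$ is finite for every $r$, so
\[
\rho_1(r):=\max\Big(r,\ \max\{\,\rho_+(l_G(g)+1)\ :\ g\in G,\ l_G(g)\le r\,\}\Big)
\]
is a well-defined, finite, non-decreasing and proper function $[0,\infty)\to\mathbb{R}$. For any $x,x'\in G$ the element $g=x^{-1}x'$ satisfies $l_G(g)\le d_G(x,x')$, so $\rho_+(l_G(g)+1)$ is among the numbers maximized in $\rho_1(d_G(x,x'))$, whence $d_H(\phi(x),\phi(x'))\le\rho_1(d_G(x,x'))$; this is precisely the statement that $\phi$ is bornologous. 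I do not expect any serious obstacle here: the only mildly delicate point is that $\rho_+$ need not be monotone or even locally bounded, so one must use properness of $l_G$ (finiteness of balls) and the trivial device of maximizing with the identity to manufacture the required non-decreasing proper majorant $\rho_1$.
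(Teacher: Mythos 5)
Your proof is correct and is essentially the argument the paper has in mind when it declares the proof ``obvious'': reduce via left-invariance and the homomorphism property to the estimate $l_H(\phi(g))<\rho_+(l_G(g)+1)$, then use properness of $l_G$ (finiteness of balls) to majorize $\rho_+$ by a non-decreasing proper function. The one mildly delicate point --- that $\rho_+$ is not assumed monotone, so one must manufacture the required non-decreasing proper $\rho_1$ by maximizing over finite balls and taking a maximum with $r$ --- is exactly the detail worth writing down, and you handle it correctly.
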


The proof is obvious.

\section{Equi-variant straight FDC}
In this section, we focus on residually finite groups and their box spaces. There are some well known results on the relations between large scale properties of the group and its box space. For example, a result by E. Guentner says that a group is amenable if and only if its box space has Property A (see \cite{Roe03}). X. Chen, Q. Wang and X. Wang proved a group is a-T-menable if and only if its box space can be fibred coarsely embedded into some Hilbert space \cite{chen2013characterization}. Recently, W. Rufus and G. Yu showed a group has Kazhdan's property (T) if and only if its box space has geometric property (T) \cite{willett2014geometric}. Here we want to show there are some similar relations concerning FDC and sFDC between the group and its box space.

In this section and the next one, groups are always assumed to be discrete and countable, and we will not mention this repeatedly in the following.
We begin with some basic definitions for residually finite groups and their box spaces.
\begin{defn}[\cite{Roe03}]\label{res fin}
A group $G$ is called \emph{residually finite}, if there exists a sequence of normal subgroups in $G$:
$$G=N_0 \supseteq N_1 \supseteq N_2 \supseteq \cdots,$$
such that each $N_i$ has finite index in $G$, and $\bigcap\limits_{i\geqslant 1}N_i=\{1_G\}$.
\end{defn}

Now equip $G$ with some left-invariant proper length metric, and equip every quotient group $G/N_i$ with the quotient metric (see Section 2.2). Fix a sequence of normal subgroups $\{N_i\}$ in $G$.

\begin{defn}[\cite{Roe03}]\label{box def}
The \emph{box space} of $G$ corresponding to $\{N_i\}$, denoted by $\square_{\{N_i\}}G$, is the coarse disjoint union of $\{G/N_i\}$. More explicitly, as a set, it's $\bigsqcup_i G/N_i$. The metric on each $G/N_i$ is the quotient metric, and for different pieces, define:
$$d(x,y)=diam(G/N_i)+diam(G/N_j)+i+j,$$
where $x\in G/N_i$ and $y\in G/N_j$.
\end{defn}

We state a well-known lemma which plays an important role in the proof of our main theorems. The proof is directly from the definitions of residual finiteness and quotient metric.
\begin{lem}[\cite{Roe03}]\label{basic isometry}
Let $G$ and $\{N_i\}$ be as above. Then for any $R>0$, there exists an integer number $i_0$, such that for any $i\geqslant i_0$, the restriction of the quotient map on the ball with radius $R$ and center $1_G$ is isometric, i.e. $\pi|_{B_G(1_G,R)}: B_G(1_G,R) \rightarrow B_{G/N_i}(1_{G/N_i},R)$ is an isometric bijection.
\end{lem}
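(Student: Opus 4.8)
The plan is to choose the index $i_0$ so that $N_i$ misses a sufficiently large ball around $1_G$, and then to verify separately that $\pi|_{B_G(1_G,R)}$ is onto $B_{G/N_i}(1_{G/N_i},R)$, injective, and distance-preserving. Surjectivity will be immediate from Lemma \ref{ball}, and the remaining two properties will both follow from the single principle that a normal subgroup meeting a large ball only in the identity cannot glue together distinct elements that sit close to $1_G$.

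First I would invoke properness of $l$ to note that $B_G(1_G,4R)$ is a finite set. Since the $N_i$ are nested with $\bigcap_{i\geqslant 1}N_i=\{1_G\}$, for each $s\in B_G(1_G,4R)\setminus\{1_G\}$ there is an index beyond which $s\notin N_i$; taking the largest such index over the finitely many such $s$ yields an $i_0$ with $N_i\cap B_G(1_G,4R)=\{1_G\}$ for all $i\geqslant i_0$. I then fix such an $i$ and abbreviate $\pi=\pi_i$.

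Next, surjectivity of $\pi|_{B_G(1_G,R)}$ onto $B_{G/N_i}(1_{G/N_i},R)$ is exactly the content of Lemma \ref{ball} applied with $N=N_i$. For injectivity, if $g,h\in B_G(1_G,R)$ and $\pi(g)=\pi(h)$, then $g^{-1}h\in N_i$ while $l(g^{-1}h)\leqslant l(g)+l(h)<2R<4R$, so $g^{-1}h\in N_i\cap B_G(1_G,4R)=\{1_G\}$ and $g=h$. For the isometry, one inequality is free since the quotient map is contracting; for the reverse I would use the formula $d_{G/N_i}(\pi(g),\pi(h))=\min_{n\in N_i}l(g^{-1}hn)$ from Section 2.2. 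Properness guarantees this minimum is attained, say at $n_0\in N_i$; then $l(g^{-1}hn_0)\leqslant l(g^{-1}h)<2R$, whence $l(n_0)\leqslant l(g^{-1}h)+l(g^{-1}hn_0)<4R$, forcing $n_0=1_G$, and therefore $d_{G/N_i}(\pi(g),\pi(h))=l(g^{-1}h)=d_G(g,h)$.

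I do not expect a genuine obstacle here — as the surrounding text remarks, the statement falls straight out of the definitions. The only points that need mild care are bookkeeping ones: tracking the radius through the triangle inequalities so that one arranges $N_i\cap B_G(1_G,4R)=\{1_G\}$ rather than the insufficient $N_i\cap B_G(1_G,2R)=\{1_G\}$, and observing that the infimum defining the quotient length is a genuine minimum precisely because $l$ is proper.
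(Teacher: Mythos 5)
Your proof is correct and is exactly the standard argument that the paper leaves implicit (it only remarks that the lemma follows ``directly from the definitions'' and cites Roe): choose $i_0$ with $N_i\cap B_G(1_G,4R)=\{1_G\}$ using properness and the nested trivial intersection, get surjectivity from Lemma \ref{ball}, and deduce injectivity and the isometry from the quotient-metric formula. The bookkeeping with the radius $4R$ and the attainment of the minimum are handled correctly, so there is nothing to add.
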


From the above lemma, we can prove Theorem \ref{main1}.
\begin{proof}[Proof of Theorem 1]
If $G$ is finite, the theorem holds trivially.

We assume $G$ is infinite. Suppose $G$ has FDC, then for any increasing positive sequence $\{R_i\}$ with $R_i \rightarrow \infty$ ($i \rightarrow \infty$), the metric family $\{B_{G}(1, R_i)\}$ has FDC. Now for any positive integer number $n$, by Lemma \ref{basic isometry}, there exists an integer number $i_n$ such that the ball $B_G(1,n)$ is isometric to the ball $B_{G/N_i}(1,n)$ for any $i\geqslant i_n$ under the quotient map. We take $i_n$ to be the minimal integer satisfying the above condition, then $i_n$ goes to infinity when $n$ goes to infinity since $G$ is infinite. For any $i_n\leqslant j < i_{n+1}$, define $r_j=n$. Then the metric family $\{B_{G/N_j}(1,r_j)\}$ is isometric to a subfamily of $\{B_{G}(1, n)\}$, which has FDC. So the coarse disjoint union of $\{B_{G/N_j}(1,r_j)\}$ has FDC.

Conversely, suppose there exists a sequence of increasing positive numbers $\{r_i\}$ with $r_i \rightarrow \infty$ ($i \rightarrow \infty$), such that the coarse disjoint union of $\{B_{G/N_i}(1,r_i)\}$ has FDC. For any $R>0$, we will construct a $R-$decomposition of $G$ over some metric family having FDC, which implies that $G$ has FDC by definition. Define $A_j=\{g\in G:(4j-4)R \leqslant l(g) \leqslant (4j-2)R\}$, and $B_j=\{g\in G: (4j-2)R \leqslant l(g) \leqslant 4jR\}$ for $j=1,2,3,\ldots$. Then $\{A_j\}$, $\{B_j\}$ are $R-$disjoint, and there exists a decomposition of $G$:
$$G=(\bigsqcup_j A_j) \cup (\bigsqcup_j B_j).$$
It suffices to show the metric family $\{B_G(1,2nR):n=1,2,\ldots\}$ has FDC. Now by Lemma \ref{basic isometry}, for any integer number $n$, there exists some $i_n$, such that $r_{i_n} \geqslant 2nR$ and $\{B_G(1,r_{i_n})\}$ is isometric to $\{B_{G/N_{i_n}}(1,r_{i_n})\}$. So the metric family $\{B_G(1,2nR):n=1,2,\ldots\}$ is isometric to a subfamily of $\{B_{G/N_i}(1,r_i)\}$, which has FDC by assumption. This implies that $\{B_G(1,2nR):n=1,2,\ldots\}$ also has FDC.
\end{proof}

Theorem \ref{main1} gives a criterion for residually finite groups to tell whether they have FDC or not. Next, we would like to give another criterion for box spaces. In \cite{szabo2014rokhlin}, G. Szab\'{o}, J. Wu and J. Zacharias have shown some equivalent conditions between residually finite groups and their box spaces concerning asymptotic dimensions. To state their result as well as for our later statements, let's first introduce some notations.

\begin{defn}
Let $G$ be a group and $H$ be a subgroup in $G$.
\begin{itemize}
  \item Let $\mathcal{U}$ be a metric family consisting of subsets in $G$. $\mathcal{U}$ is called \emph{$H$-invariant}, if for any $U\in \mathcal{U}$ and any $h\in H$, $U\cdot h$ still belongs to $\mathcal{U}$;
  \item Let $\mathcal{X}, \mathcal{Y}$ be metric families consisting of subsets in $G$. For $R>0$, we call a decomposition $\mathcal{X}\stackrel{R}{\rightarrow}\mathcal{Y}$ \emph{$H$-invariant} if for any $X\in\mathcal{X}$, there exists a decomposition:
      $$X=X_0 \cup X_1,\mbox{\quad}X_i=\bigsqcup_{\mbox{\scriptsize$\begin{array}{c} R-disjoint \\ j\in \mathbb{N} \end{array}$}}X_{ij},$$
      where $X_{ij}\in \mathcal{Y}$ and $\{X_{0j}\}$, $\{X_{1j}\}$ are both $H$-invariant.
\end{itemize}
\end{defn}

We recall the result by G. Szab\'{o}, J. Wu and J. Zacharias.
\begin{prop}[\cite{szabo2014rokhlin}]
Let $G$ be a residually finite group with a sequence of normal subgroups $\{N_i\}$ satisfying the conditions in Definition \ref{res fin}. Then the following statements are equivalent:
\begin{enumerate}
    \item The box space $\square_{\{N_i\}}G$ has asymptotic dimension at most $s$;
    \item For any $R>0$, there exists an integer number $K$ and a covering of $G$: $\mathcal{U}=\mathcal{U}^0 \cup \cdots \cup \mathcal{U}^s$ with Lebesgue number at least $R$ such that each $\mathcal{U}^i$ has mutually disjoint members and is $N_K$-invariant.
\end{enumerate}
\end{prop}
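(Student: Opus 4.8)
The statement to prove is the equivalence, for a residually finite group $G$ with defining sequence $\{N_i\}$, between (1) $\mathrm{asdim}\,\square_{\{N_i\}}G \leqslant s$ and (2) the existence, for each $R>0$, of an integer $K$ and a covering $\mathcal{U}=\mathcal{U}^0\cup\cdots\cup\mathcal{U}^s$ of $G$ with Lebesgue number at least $R$, each $\mathcal{U}^i$ having pairwise disjoint members and being $N_K$-invariant. The plan is to prove each implication by translating between data on the box space and $N_K$-invariant data upstairs on $G$, using Lemma \ref{basic isometry} as the bridge.

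\textbf{From (1) to (2).} Suppose $\mathrm{asdim}\,\square_{\{N_i\}}G\leqslant s$ and fix $R>0$. By Lemma \ref{basic isometry}, choose $i_0$ so that for $i\geqslant i_0$ the quotient map $\pi_i$ is isometric on $B_G(1_G,2R)$, say. Since the box space has asymptotic dimension at most $s$, applying the definition at scale (say) $4R$ produces, on each piece $G/N_i$ with $i$ large, a covering $\mathcal{V}_i=\mathcal{V}_i^0\cup\cdots\cup\mathcal{V}_i^s$ by uniformly bounded sets, each $\mathcal{V}_i^k$ being $4R$-disjoint, and with Lebesgue number at least $R$ (disjointness and Lebesgue number in the box metric transfer to the quotient metric for a single fixed large piece). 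Now fix one such $i=K\geqslant i_0$; the key observation is that the pullback $\pi_K^{-1}(\mathcal{V}_K^k)$ is automatically $N_K$-invariant, since $\pi_K$ is $N_K$-equivariant (right multiplication by $n\in N_K$ is an isometry of $G$ covering the identity of $G/N_K$). The members of $\pi_K^{-1}(\mathcal{V}_K^k)$ are pairwise disjoint (preimages of disjoint sets), and the Lebesgue number upstairs is at least that downstairs because $\pi_K$ is $1$-Lipschitz and locally isometric on $R$-balls. Taking $\mathcal{U}^k=\pi_K^{-1}(\mathcal{V}_K^k)$ gives (2). One must be mildly careful: a preimage of a bounded set in $G/N_K$ is a union of $N_K$-translates and is generally unbounded, but that is fine — condition (2) does not require the members of $\mathcal{U}^i$ to be bounded, only pairwise disjoint.

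\textbf{From (2) to (1).} Conversely, fix a scale $r>0$ for the box space. We need a covering with Lebesgue number $\geqslant r$ splitting into $s+1$ uniformly bounded, $r$-disjoint subfamilies on each piece $G/N_i$. Apply (2) with $R$ chosen appropriately (at least $r$, perhaps larger to absorb Lebesgue-number losses) to get $K$ and the $N_K$-invariant covering $\mathcal{U}=\mathcal{U}^0\cup\cdots\cup\mathcal{U}^s$ of $G$. For $i\geqslant K$ (so that $N_i\subseteq N_K$), push this covering down: since each $\mathcal{U}^k$ is $N_K$-invariant and hence $N_i$-invariant, the images $\pi_i(U)$ for $U\in\mathcal{U}^k$ are well-defined as a family on $G/N_i$, and $N_i$-invariance ensures that disjointness downstairs can be read off from disjointness of the $N_i$-saturations upstairs. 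To control boundedness and disjointness of these images, intersect $\mathcal{U}$ with a fundamental-domain-type argument or, more cleanly, use that $\mathrm{asdim}\,G\leqslant s$ follows already (one may invoke the single-space version) so the only issue is uniformity across the pieces $G/N_i$ — and that uniformity is exactly what the single choice of $K$ and the uniform bound on $\mathrm{diam}(U)$, $U\in\mathcal{U}$, provides, again via Lemma \ref{basic isometry} to identify large balls. The finitely many small pieces $G/N_i$ with $i<K$ are handled trivially since each is a bounded space.

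\textbf{Main obstacle.} The technical heart is the bookkeeping in the (2)$\Rightarrow$(1) direction: matching the Lebesgue-number and disjointness constants between $G$ and $G/N_i$ and ensuring that the bounded sets $U\in\mathcal{U}$ remain bounded and $r$-disjoint after projecting, uniformly over all $i\geqslant K$. The clean way around this is to require in (2) that the members of $\mathcal{U}^i$ be uniformly bounded (which one can arrange by first intersecting with balls, at the cost of enlarging $R$), and then Lemma \ref{basic isometry} makes the projection an isometry on each such member once $i$ is large enough — but since $K$ is fixed in advance and the diameters are uniformly bounded, ``large enough'' is automatic. I expect the converse direction to be where the argument in \cite{szabo2014rokhlin} spends most of its effort, precisely because one is trading a statement about a single well-chosen $N_K$ for a statement that must hold uniformly on the whole infinite sequence of quotients.
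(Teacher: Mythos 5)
The paper does not actually prove this proposition---it is quoted from \cite{szabo2014rokhlin}---so your argument has to stand on its own, and it has a genuine gap centred on boundedness. You note, correctly, that condition (2) as transcribed does not require the members of $\mathcal{U}^i$ to be bounded, and you exploit this in $(1)\Rightarrow(2)$ by taking $\mathcal{U}^k=\pi_K^{-1}(\mathcal{V}^k_K)$, whose members are unbounded $N_K$-saturated sets. But under that literal reading condition (2) is vacuous: $\mathcal{U}^0=\{G\}$ is a covering with infinite Lebesgue number, mutually disjoint members, and $N_K$-invariance for every $K$, so (2) would hold with $s=0$ for every residually finite group, while box spaces can have infinite asymptotic dimension (expanders coming from property (T) groups). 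The intended hypothesis, present in \cite{szabo2014rokhlin}, is that $\mathcal{U}$ consists of \emph{uniformly bounded} sets, and then your construction fails, since the full preimage of a bounded set is unbounded. The repair is to choose $K$ so large that every nontrivial element of $N_K$ has length exceeding, say, $2S+2R$, where $S$ bounds the diameters of the sets in $\mathcal{V}^k_K$; then $\pi_K^{-1}(V)$ splits as a disjoint union $\bigsqcup_{n\in N_K}U_V\cdot n$ of isometric copies of $V$, and $\mathcal{U}^k=\{U_V\cdot n:\ V\in\mathcal{V}^k_K,\ n\in N_K\}$ is uniformly bounded, pairwise disjoint, $N_K$-invariant (right translation by $N_K$ \emph{permutes} the members rather than fixing each one), and inherits the Lebesgue number because distinct translates of the same $U_V$ are at distance at least $2R$ apart.

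The direction $(2)\Rightarrow(1)$ is also under-argued at its one essential point. Invoking ``$\mathrm{asdim}\,G\leqslant s$ follows already'' does not help: finite asymptotic dimension of $G$ says nothing about the box space (again, expanders), and ``intersecting with balls'' to restore boundedness destroys exactly the $N_K$-invariance that carries all the content of (2). What makes the implication work is the following: for $i\geqslant K$ one has $N_i\subseteq N_K$, so each $\mathcal{U}^l$ is $N_i$-invariant; hence if $\pi_i(U)\cap\pi_i(U')\neq\emptyset$ for $U,U'\in\mathcal{U}^l$, then $U$ meets $U'\cdot n$ for some $n\in N_i$, forcing $U=U'\cdot n$ and therefore $\pi_i(U)=\pi_i(U')$. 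Thus the pushed-down families on $G/N_i$ consist of genuinely disjoint sets, uniformly bounded by the same constant and covering with the same Lebesgue number, uniformly in $i\geqslant K$; the finitely many pieces with $i<K$ are bounded and absorbed, and the passage between ``disjoint colour classes with Lebesgue number $R$'' and ``$r$-disjoint colour classes'' is the standard neighbourhood/shrinking argument that the paper carries out (for decompositions) in Lemmas \ref{dec lem1} and its converse. You gesture at the invariance point but do not carry it out, and the two alternatives you propose are, respectively, unnecessary and circular.
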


Now we turn to the case of straight FDC. The idea is similar, but one needs some more technical analyses. We state Theorem 2 again explicitly.

\begin{thmB}
Let $G$ be a residually finite group with a sequence of normal subgroups $\{N_i\}$ satisfying the conditions in Definition \ref{res fin}. Then the following statements are equivalent:
\begin{enumerate}
    \item[(1)] The box space $\square_{\{N_i\}}G$ has sFDC;
    \item[(2)] For any increasing sequence $R_1 < R_2 < \cdots < R_n < \cdots$, there exist integer numbers $m$ and $K$, and a sequence of decompositions:
          $$ G  \stackrel{R_1}{\longrightarrow}  \mathcal{Y}_1  \stackrel{R_2}{\longrightarrow}  \cdots  \stackrel{R_m}{\longrightarrow}  \mathcal{Y}_m$$
          such that $\mathcal{Y}_m$ is bounded and the last decomposition $\mathcal{Y}_{m-1}  \stackrel{R_m}{\longrightarrow}  \mathcal{Y}_m$ is $N_K$-invariant. If $G$ satisfies the above condition, we call $G$ has \emph{equi-variant sFDC}.
\end{enumerate}
\end{thmB}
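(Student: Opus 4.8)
The plan is to prove the two implications separately, using Lemma~\ref{basic isometry} as the bridge between balls in $G$ and balls in the quotients $G/N_i$, exactly in the spirit of the proof of Theorem~\ref{main1}, but now carrying along the extra bookkeeping needed to track $N_K$-invariance of the final decomposition. Throughout, recall that sFDC is tested against an arbitrary increasing sequence $R_1 < R_2 < \cdots$, so both directions begin by fixing such a sequence and then producing a finite chain of decompositions ending in a bounded family.

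First I would prove (2)$\Rightarrow$(1). Fix an increasing sequence $\{R_n\}$; I may as well enlarge it and assume $R_n \to \infty$. Apply (2) to get $m$, $K$, and a chain $G \stackrel{R_1}{\to} \mathcal{Y}_1 \stackrel{R_2}{\to} \cdots \stackrel{R_m}{\to} \mathcal{Y}_m$ with $\mathcal{Y}_m$ bounded and the last step $N_K$-invariant. The key point is that an $N_K$-invariant decomposition descends to the quotient $G/N_K$: since each member of $\mathcal{Y}_{m-1}$ in the last step decomposes into $R_m$-disjoint pieces that are unions of $N_K$-orbits, their images under $\pi_K: G \to G/N_K$ are still $R_m$-disjoint (the quotient metric on $G/N_K$ was defined so that $d([g],[h]) = d_G(gN_K, hN_K)$, so distances between $N_K$-invariant sets are preserved exactly, not merely contracted). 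Thus the whole chain, restricted to a large ball $B_G(1, r) \subseteq G$ and pushed to $B_{G/N_i}(1, r) \subseteq G/N_i$ for every $i \ge $ some threshold via Lemma~\ref{basic isometry}, transports to a valid chain of $R_1,\dots,R_m$-decompositions of each ball $B_{G/N_i}(1,r_i)$ once $r_i$ is large enough, and for the finitely many remaining (small) indices $i$ the pieces $G/N_i$ are individually bounded. Assembling these, and using that the cross-terms in the coarse-disjoint-union metric on $\square_{\{N_i\}}G$ grow, one gets a bounded family after $m$ steps, so the box space has sFDC. The one genuinely delicate part here is handling the finitely many "bad" small indices $i$ and the transition region, i.e. checking that the $R_n$-disjointness is not spoiled when one patches the quotient-level decompositions back together into a single sequence of decompositions of all of $\square_{\{N_i\}}G$; this is the same phenomenon as in Theorem~\ref{main1} but requires the invariance to survive the patching.

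Conversely, for (1)$\Rightarrow$(2), fix an increasing sequence $\{R_n\}$ and apply sFDC of $\square_{\{N_i\}}G$ to the sequence $R_1 < R_2 < \cdots < R_m < R_{m+1}$ (I will see in a moment why I take one extra term): get a chain $\square_{\{N_i\}}G \stackrel{R_1}{\to} \mathcal{Z}_1 \stackrel{R_2}{\to} \cdots \stackrel{R_{m+1}}{\to} \mathcal{Z}_{m+1}$ with $\mathcal{Z}_{m+1}$ bounded, say of diameter $\le D$. Restrict the entire chain to a single slice $G/N_i$ of the box space. Now choose, via Lemma~\ref{basic isometry}, an index $i$ large enough that the ball $B_G(1, r)$ is isometric via $\pi_i$ to $B_{G/N_i}(1,r)$ for $r$ much larger than all the $R_n$ and than $D$ and than the diameter of the decomposition pieces produced; because the chain on $G/N_i$ only ever moves points a bounded distance, pulling it back through this isometry gives a chain of $R_1,\dots,R_{m+1}$-decompositions of the ball $B_G(1,r)$, hence — by the same "annular" trick as in Theorem~\ref{main1}, decomposing $G = (\bigsqcup_j A_j) \cup (\bigsqcup_j B_j)$ into $R_1$-disjoint annuli each contained in such a ball — a chain of decompositions of all of $G$. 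The reason for the extra $(m+1)$-st step is that the pieces of $\mathcal{Z}_m$ on the slice $G/N_i$, when pulled back to $G$, are $N_i$-invariant essentially by construction: a piece of $\mathcal{Z}_m$ downstairs is a subset $S \subseteq G/N_i$, and its full preimage $\pi_i^{-1}(S) \subseteq G$ is automatically a union of $N_i$-cosets; the final $R_{m+1}$-decomposition of these preimages into their (now $N_i$-invariant, since each member is $\pi_i^{-1}$ of an $R_{m+1}$-disjoint family in $G/N_i$) bounded pieces is then $N_i$-invariant, so I take $K = i$.

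The main obstacle I anticipate is reconciling two competing demands in the (1)$\Rightarrow$(2) direction: to make the pulled-back decomposition of $G$ genuinely $N_K$-invariant I want to work with the \emph{full} preimage $\pi_i^{-1}(S)$ of each piece $S$ downstairs, but the full preimage of a bounded set in $G/N_i$ is \emph{not} bounded in $G$ — it is a union of infinitely many cosets. So the last decomposition $\mathcal{Y}_{m-1} \to \mathcal{Y}_m$ cannot simply be "the preimage of the last decomposition downstairs"; instead one must run the first $m-1$ steps as genuine (non-equivariant) decompositions of the ball/annuli in $G$ obtained by restricting to the slice via the isometry, which keeps pieces bounded, and only at the very last step replace bounded pieces by their $N_K$-saturations, absorbing the resulting unboundedness into the fact that the \emph{target} family $\mathcal{Y}_m$ is allowed to be $N_K$-invariant (hence its members are unions of cosets, of bounded "diameter modulo $N_K$"). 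Making precise that such an $N_K$-saturated family is the correct notion of "bounded up to the group action" and that the penultimate family $\mathcal{Y}_{m-1}$ can be chosen bounded while its $R_m$-decomposition is $N_K$-invariant — i.e. choosing the threshold $K$ \emph{after} seeing $m$ and the sizes involved — is the crux, and is presumably where the "more technical analyses" promised before the theorem statement are needed.
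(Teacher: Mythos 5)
There are genuine gaps in both directions, and they share a common source: you are reading ``the decomposition $\mathcal{Y}_{m-1}\stackrel{R_m}{\to}\mathcal{Y}_m$ is $N_K$-invariant'' as saying that the individual pieces are unions of $N_K$-cosets. It does not: it says the \emph{families} $\{X_{0j}\}$, $\{X_{1j}\}$ are permuted by right translation by $N_K$, while each individual piece remains bounded (a coset-saturated piece would be unbounded, contradicting the requirement that $\mathcal{Y}_m$ be bounded). This misreading is exactly what derails your $(1)\Rightarrow(2)$ argument: your proposed fix of ``replacing bounded pieces by their $N_K$-saturations'' at the last step produces an unbounded final family, which is not allowed. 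The correct device is to choose $K$ so large that $N_K\cap B_G(1,2c)=\{1\}$, where $c$ bounds the diameters in $\mathcal{Y}_m$; then the preimage of a bounded piece $Y\subseteq G/N_K$ splits as $\pi_K^{-1}(Y)=\bigsqcup_{h\in N_K}U_Y\cdot h$ into bounded, mutually far-apart sheets, and the family of all these sheets is $N_K$-invariant yet bounded. For the earlier steps one takes the honest (unbounded) preimages $\pi_K^{-1}(Y'\cap G/N_K)$ -- no ball isometry and no annular trick are needed, and indeed the annular route cannot work here, since a family of subsets of a ball of $G$ can never be invariant under the infinite group $N_K$.

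The $(2)\Rightarrow(1)$ direction has a second, independent gap: you descend only the last step via invariance and propose to transport the earlier steps to the quotients through Lemma~\ref{basic isometry}. But that lemma identifies only the ball $B_{G/N_i}(1,r_i)$ with a ball in $G$, and this ball is in general a proper (often small) subset of $G/N_i$; restricting to it does not yield a decomposition of the whole quotient, hence not of the box space. The observation that rescues this direction is that the $N_K$-invariance of the \emph{last} decomposition forces every $Z\in\mathcal{Y}_{m-1}$ to satisfy $Z\cdot h=Z$ for $h\in N_K$, and then, going backwards, every set appearing anywhere in the chain (including $G$ itself) is $N_K$-saturated; consequently the entire chain pushes forward under $\pi_k$ for all $k\geqslant K'$, with the finitely many small quotients lumped into one bounded piece. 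Even then one must be careful that $\pi_k$ destroys $r$-disjointness of non-saturated pieces; this is why the paper first converts ordinary decompositions into ``full'' decompositions measured by Lebesgue numbers (Lemma~\ref{dec lem1} and its converse, Propositions~\ref{equ1} and~\ref{equ2}), which are preserved under both $\pi_k$ and $\pi_K^{-1}$ on saturated sets. Your proposal does not engage with this conversion, and your justification that ``distances between $N_K$-invariant sets are preserved'' is applied to pieces that are not in fact invariant.
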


To prove Theorem 2, we need a new kind of decomposition.
\begin{defn}
Let $\mathcal{X}, \mathcal{Y}$ be metric families. We call $\mathcal{X}$ is $R-$\emph{full decomposable} over $\mathcal{Y}$, denoted by $\mathcal{X}\stackrel{R}{\rightsquigarrow}\mathcal{Y}$,
if for any $X\in \mathcal{X}$, there exists a decomposition:
$$X=X_0 \cup X_1,\mbox{\quad}X_i=\bigsqcup_j X_{ij},$$
where $X_{ij}\in \mathcal{Y}$ and $\{X_{ij}\}$, viewed as a cover of $X$, has Legesbue number $L(\{X_{ij}\})\geqslant R$. (Sometimes we call the decomposition in Definition \ref{dec def} "ordinary" to tell it from full decomposition.)

Furthermore, suppose $\mathcal{X}, \mathcal{Y}$ consist of subsets of some group $G$, and $H$ is a subgroup in $G$. If $\{X_{0j}\}$, $\{X_{1j}\}$ are both $H$-invariant, then we call that the full decomposition $\mathcal{X}\stackrel{R}{\rightsquigarrow}\mathcal{Y}$ is $H$\emph{-invariant}.
\end{defn}

We will show the definition of sFDC given by the original sequence of decompositions is equivalent to the one given by the sequence of full decompositions. To be more precise, we prove the following lemmas.

\begin{lem}\label{dec lem1}
Given a metric space $X$ and a sequence of decompositions:
$$ X \stackrel{R_1}{\longrightarrow}  \mathcal{Y}_1  \stackrel{R_2}{\longrightarrow}  \cdots  \stackrel{R_n}{\longrightarrow}  \mathcal{Y}_n,$$
where $0 < R_1 < R_2 < \cdots < R_n$, and $\mathcal{Y}_1, \cdots, \mathcal{Y}_n$ are metric families consisting of subsets in $X$.
Then there is a sequence of full decompositions:
$$X \stackrel{R_1/4}{\rightsquigarrow} \widetilde{\mathcal{Y}_1} \stackrel{R_2/4}{\rightsquigarrow} \widetilde{\mathcal{Y}_2}  \stackrel{R_3/4}{\rightsquigarrow}  \cdots  \stackrel{R_n/4}{\rightsquigarrow}  \widetilde{\mathcal{Y}_n},$$
where $\widetilde{\mathcal{Y}_1}, \cdots, \widetilde{\mathcal{Y}_n}$ are some metric families consisting of subsets in $X$.
\end{lem}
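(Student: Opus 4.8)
The plan is to convert each ordinary $R_i$-decomposition into an $R_i/4$-full decomposition by \emph{fattening} the pieces, and then to iterate this fattening through the whole sequence, keeping careful track of how the enlargement of one stage forces a proportional enlargement of the metric families at later stages. First, recall that if $X = X_0 \cup X_1$ with $X_i = \bigsqcup_{r\text{-disjoint}} X_{ij}$, then replacing each $X_{ij}$ by its $(r/4)$-neighborhood $\widetilde{X_{ij}} = \{x \in X : d(x,X_{ij}) < r/4\}$ preserves the key combinatorial features: the enlarged families $\{\widetilde{X_{0j}}\}$ and $\{\widetilde{X_{1j}}\}$ are still disjoint within each of the two groups (since $r/4 + r/4 < r$), so $\{\widetilde{X_{ij}}\}$ is still a decomposition of $X$ into two subfamilies; and crucially, the cover $\{\widetilde{X_{ij}}\}$ now has Lebesgue number at least $r/4$, because any ball $B(x,r/4)$ is contained in $\widetilde{X_{ij}}$ whenever $x \in X_{ij}$. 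This is exactly the content of passing from $\stackrel{r}{\rightarrow}$ to $\stackrel{r/4}{\rightsquigarrow}$ at a single stage.

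The subtlety — and the main obstacle — is that this fattening must be propagated coherently along the sequence. After fattening at stage~1, the ambient sets we are decomposing at stage~2 are the enlarged pieces $\widetilde{Y}$, not the original $Y \in \mathcal{Y}_1$; but each such $\widetilde{Y}$ is contained in the $(R_1/4)$-neighborhood of some $Y$, hence inside the $R_1/4$-neighborhood (taken in $X$) of an element of $\mathcal{Y}_1$. Since $\mathcal{Y}_1 \stackrel{R_2}{\rightarrow} \mathcal{Y}_2$ gives, for each $Y \in \mathcal{Y}_1$, a decomposition into $R_2$-disjoint pieces from $\mathcal{Y}_2$, I would intersect that decomposition with $\widetilde{Y}$ and then fatten \emph{inside} $\widetilde Y$ by $R_2/4$. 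The $R_2$-disjointness is more than enough room: even after we have already used up $R_1/4 \le R_2/4$ worth of slack from the previous stage, the pieces at stage~2, suitably fattened, remain disjoint within each color class and have Lebesgue number $\ge R_2/4$ as a cover of $\widetilde Y$, hence as a cover of $X$ restricted appropriately. One then \emph{defines} $\widetilde{\mathcal{Y}_2}$ to be the collection of all these doubly-fattened pieces, and continues inductively: at stage $k$ the family $\widetilde{\mathcal{Y}_k}$ consists of pieces obtained from $\mathcal{Y}_k$ by a bounded sequence of neighborhood-enlargements, and the full decomposition $\widetilde{\mathcal{Y}_{k-1}} \stackrel{R_k/4}{\rightsquigarrow} \widetilde{\mathcal{Y}_k}$ holds because the $R_k$-disjointness of the $\mathcal{Y}_{k-1} \to \mathcal{Y}_k$ decomposition dominates all the accumulated slack $\sum_{j<k} R_j/4 < R_k/4 \cdot (\text{something})$ — here I should be slightly careful and may want to enlarge by $R_k/4$ only relative to the \emph{current} ambient piece, so that the disjointness budget at each stage is spent locally and never accumulates across stages. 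The point is that $R_k$-disjointness minus two $R_k/4$-fattenings still leaves the families disjoint.

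In carrying this out I would proceed by induction on $n$: the base case $n=1$ is the single-stage fattening described above, and the inductive step applies the induction hypothesis to the sequence $\mathcal{Y}_1 \stackrel{R_2}{\rightarrow}\cdots\stackrel{R_n}{\rightarrow}\mathcal{Y}_n$ viewed inside each fattened piece $\widetilde Y$, then precomposes with the first full decomposition $X \stackrel{R_1/4}{\rightsquigarrow}\widetilde{\mathcal{Y}_1}$. The only genuinely delicate bookkeeping is verifying the Lebesgue-number bound at each intermediate stage: one must check that for every point $x$ in the relevant ambient set, the ball $B(x, R_k/4)$ lies entirely inside one of the fattened pieces of $\widetilde{\mathcal{Y}_k}$, which follows from the observation that $x$ lies within distance $R_k/4$ of an \emph{unfattened} piece (because the unfattened pieces of the $\mathcal{Y}_{k-1}\to\mathcal{Y}_k$ decomposition cover the ambient set), so $B(x,R_k/4)$ is contained in its $R_k/4$-neighborhood, which is by construction a member of $\widetilde{\mathcal{Y}_k}$. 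I expect no obstruction in the disjointness verification since $R_{k}/4 + R_k/4 < R_k$ with room to spare; the place to be most careful is simply ensuring that the fattening at each stage is performed relative to the correct ambient space so that membership in the new family $\widetilde{\mathcal{Y}_k}$ is well-defined and the slack does not silently accumulate.
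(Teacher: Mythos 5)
Your approach is the same as the paper's: fatten each piece by a metric neighborhood so that $r$-disjointness is converted into a positive Lebesgue number, and at later stages take the neighborhoods relative to the current (already fattened) ambient piece so that the disjointness budget is spent locally instead of accumulating across stages. That last device is precisely what the paper does with its sets $N_{R_k/2,\widetilde{Y}}(Z)$, and your instinct that accumulated slack is the real danger is the correct one.

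There is, however, one concrete step in your verification that fails as written. You assert that since $x$ lies within distance $R_k/4$ of an unfattened piece $Z$, the ball $B(x,R_k/4)$ is contained in the $R_k/4$-neighborhood of $Z$; the triangle inequality only gives containment in the $R_k/2$-neighborhood. Quantitatively: at stage $k$ a point of the fattened ambient piece lies within $a_{k-1}$ of the original $Y\in\mathcal{Y}_{k-1}$ (where $a_{k-1}$ is the previous fattening radius), hence within $a_{k-1}$ of some $Z\in\mathcal{Y}_k$; to obtain Lebesgue number $L$ you must therefore fatten $Z$ by at least $a_{k-1}+L$, while disjointness within a colour class requires $2(a_{k-1}+L)\leqslant R_k$. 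With your fattening radius of exactly $R_k/4$ the Lebesgue number actually obtained is only about $(R_k-R_{k-1})/4$, not $R_k/4$. The best this construction can give is a Lebesgue number of order $(R_k-R_{k-1})/2$ (the paper fattens by $R_k/2$ and gets exactly that), which meets the advertised $R_k/4$ only when $R_k\geqslant 2R_{k-1}$. To be fair, this looseness is already present in the paper's own proof of the lemma, and it is harmless in the applications (Propositions \ref{equ1} and \ref{equ2}), where one may run the argument along a sufficiently fast-growing subsequence of the $R_n$. So: same route as the paper, but your fattening radius should be $a_{k-1}+R_k/4$ (or simply $R_k/2$) rather than $R_k/4$, and even then the honest conclusion of the inductive step is $\widetilde{\mathcal{Y}}_{k-1}\stackrel{(R_k-R_{k-1})/2}{\rightsquigarrow}\widetilde{\mathcal{Y}}_k$.
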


Before proving the above lemma, let's fix some notations. Let $(X, d)$ be a metric space, $Y \subseteq X$ be a subspace. Given $R \geqslant 0$, define the $R-$neighborhood of $Y$ in $X$ to be
$$N_{R,X}(Y)=\{x\in X: d_X(x,Y)< R\}.$$
Given $R<0$, define the $R-$neighborhood of $Y$ in $X$ to be
$$N_{R,X}(Y)=\{x \in X: d_X(x, X \setminus Y) \geqslant R\}.$$

\begin{proof}[Proof of Lemma \ref{dec lem1}]
Since $X \stackrel{R_1}{\longrightarrow}  \mathcal{Y}_1$, by definition, we have:
$$X=\Big(\bigsqcup_{R_1-disjoint}Y_{0j}\Big) \cup \Big(\bigsqcup_{R_1-disjoint}Y_{1j}\Big),$$
where $Y_{ij}\in \mathcal{Y}_1$, which implies
$$X=\Big(\bigsqcup N_{R_1/2,X}(Y_{0j})\Big) \cup \Big(\bigsqcup N_{R_1/2,X}(Y_{1j})\Big).$$
And the cover $\{N_{R_1/2,X}(Y_{ij}):i=0,1; j\in \mathbb{N}\}$ has Lebesgue number $\geqslant R_1/4$.
Define $\widetilde{\mathcal{Y}_1}$ to be the cover $\{N_{R_1/2,X}(Y):Y \in \mathcal{Y}_1\}$, then by definition, we have
$X \stackrel{R_1/4}{\rightsquigarrow} \widetilde{\mathcal{Y}_1}$.

Now let's turn to the second step of the decomposition $\mathcal{Y}_1 \stackrel{R_2}{\longrightarrow}  \mathcal{Y}_2$. In other words, for any $Y_1 \in \mathcal{Y}_1$, we have the decomposition:
$$Y_1=\Big(\bigsqcup_{R_2-disjoint}Z_{0j}\Big) \cup \Big(\bigsqcup_{R_2-disjoint}Z_{1j}\Big)$$
for some $Z_{ij}\in \mathcal{Y}_2$. It's easy to see:
\begin{eqnarray*}
N_{R_1/2,X}(Y_1) &=& \Big(\bigsqcup_{R_2-R_1 disjoint}N_{R_1/2,X}(Z_{0j})\Big) \cup \Big(\bigsqcup_{R_2-R_1 disjoint}N_{R_1/2,X}(Z_{1j})\Big)\\
                 &=& \Big(\bigsqcup_{R_2-R_1 disjoint}N_{R_1/2,N_{R_1/2,X}(Y_1)}(Z_{0j})\Big) \cup \Big(\bigsqcup_{R_2-R_1 disjoint}N_{R_1/2,N_{R_1/2,X}(Y_1)}(Z_{1j})\Big),
\end{eqnarray*}
where $N_{R_1/2,N_{R_1/2,X}(Y_1)}(Z_{ij})$ is the $R_1/2-$neighborhood of $Z_{ij}$ in $N_{R_1/2,X}(Y_1)$. More precisely, $N_{R_1/2,N_{R_1/2,X}(Y_1)}(Z_{ij})=\{x\in N_{R_1/2,X}(Y_1): d_X(x,Z_{ij})< R_1/2\}$.

Now we can do the same thing as in the first step, and get a decomposition:
\begin{eqnarray*}
N_{R_1/2,X}(Y_1) &=&\Big(\bigsqcup N_{(R_2-R_1)/2,N_{R_1/2,X}(Y_1)} \big(N_{R_1/2,N_{R_1/2,X}(Y_1)}(Z_{0j})\big)\Big) \\
                 & &\cup~~ \Big(\bigsqcup N_{(R_2-R_1)/2,N_{R_1/2,X}(Y_1)} \big(N_{R_1/2,N_{R_1/2,X}(Y_1)}(Z_{1j})\big)\Big).
\end{eqnarray*}
Note that $N_{(R_2-R_1)/2,N_{R_1/2,X}(Y_1)} \big(N_{R_1/2,N_{R_1/2,X}(Y_1)}(Z_{ij})\big)  \subseteq  N_{R_2/2,N_{R_1/2,X}(Y_1)}(Z_{ij})$, and since $\{Z_{ij}: j \in \mathbb{N}\}$ is $R_2$-disjoint for $i=0,1$, we get the following decomposition:
$$N_{R_1/2,X}(Y_1) = \Big(\bigsqcup N_{R_2/2,N_{R_1/2,X}(Y_1)}(Z_{0j})\Big) \cup \Big(\bigsqcup N_{R_2/2,N_{R_1/2,X}(Y_1)}(Z_{1j})\Big).$$
Define $\widetilde{\mathcal{Y}_2}$ to be
$$\{N_{R_2/2,\widetilde{Y}}(Z) : Z \in \mathcal{Y}_2, Z \subseteq \widetilde{Y} \mbox{~for~some~}\widetilde{Y} \in \widetilde{\mathcal{Y}_1}\}.$$
Then $\widetilde{\mathcal{Y}_1} \stackrel{R_2/4}{\rightsquigarrow}  \widetilde{\mathcal{Y}_2}$.

Inductively, we can define a sequence of families by $\widetilde{\mathcal{Y}_k} = \{N_{R_k/2,\widetilde{Y}}(Z):Z \in \mathcal{Y}_k, Z \subseteq \widetilde{Y} \mbox{~for~some~}\widetilde{Y} \in \widetilde{\mathcal{Y}_{k-1}}\}$. And we have a sequence of full decompositions:
$$X \stackrel{R_1/4}{\rightsquigarrow} \widetilde{\mathcal{Y}_1} \stackrel{R_2/4}{\rightsquigarrow} \widetilde{\mathcal{Y}_2}  \stackrel{R_3/4}{\rightsquigarrow}  \cdots  \stackrel{R_n/4}{\rightsquigarrow}  \widetilde{\mathcal{Y}_n}.$$
\end{proof}

Conversely, we have the following lemma.
\begin{lem}
Given a metric space $X$ and a sequence of full decompositions:
$$X \stackrel{R_1}{\rightsquigarrow} \widetilde{\mathcal{Y}_1} \stackrel{R_2}{\rightsquigarrow}  \cdots  \stackrel{R_n}{\rightsquigarrow}  \widetilde{\mathcal{Y}_n},$$
where $0 < R_1 < R_2 < \cdots < R_n$ and $\widetilde{\mathcal{Y}_1}, \cdots, \widetilde{\mathcal{Y}_n}$ are some metric families consisting of subsets in $X$.
Then there is a sequence of decompositions:
$$ X \stackrel{R_1}{\longrightarrow}  \mathcal{Y}_1  \stackrel{R_2-R_1}{\longrightarrow}  \cdots  \stackrel{R_n-R_{n-1}}{\longrightarrow}  \mathcal{Y}_n,$$
where $\mathcal{Y}_1, \cdots, \mathcal{Y}_n$ are metric families consisting of subsets in $X$.
\end{lem}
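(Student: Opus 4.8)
The plan is to exploit the single fact that a cover with large Lebesgue number can be \emph{shrunk} into a genuinely disjoint decomposition. The basic observation is this: if $\mathcal{U}=\{U_{ij}:i\in\{0,1\},\ j\in\mathbb{N}\}$ is a cover of a metric space $W$ with Lebesgue number $\geqslant R$ such that for each fixed $i$ the sets $\{U_{ij}\}_j$ are pairwise disjoint, then putting
$$U'_{ij}:=\{x\in W:\ d_W(x,W\setminus U_{ij})\geqslant R\},$$
the family $\{U'_{ij}\}$ still covers $W$ --- this is exactly the Lebesgue-number condition, since $B_W(x,R)\subseteq U_{ij}$ for some $i,j$ forces $x\in U'_{ij}$ --- while for each fixed $i$ the family $\{U'_{ij}\}_j$ is $R$-disjoint: if $x\in U'_{ij}$ and $x'\in U'_{ij'}\subseteq U_{ij'}\subseteq W\setminus U_{ij}$ with $j\neq j'$, then $d(x,x')\geqslant d_W(x,W\setminus U_{ij})\geqslant R$. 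Thus one step $W\stackrel{R}{\rightsquigarrow}\cdot$ of a full decomposition already yields an ordinary $R$-decomposition of $W$ into the shrunk pieces $U'_{ij}$, each of which lies inside an original piece $U_{ij}$.

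The only real work is to iterate this through the given chain $X\stackrel{R_1}{\rightsquigarrow}\widetilde{\mathcal{Y}_1}\stackrel{R_2}{\rightsquigarrow}\cdots\stackrel{R_n}{\rightsquigarrow}\widetilde{\mathcal{Y}_n}$, the difficulty being that after the first step a member of the new family is a proper subset of a member of $\widetilde{\mathcal{Y}_1}$, whereas the next full decomposition only decomposes the members of $\widetilde{\mathcal{Y}_1}$ themselves. The fix is to record how deeply a piece sits inside each of its ancestors. Fix, for every $Y\in\widetilde{\mathcal{Y}_k}$ with $k<n$, one of its decompositions witnessing $\widetilde{\mathcal{Y}_k}\stackrel{R_{k+1}}{\rightsquigarrow}\widetilde{\mathcal{Y}_{k+1}}$. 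Call a tuple $X=W_0\supseteq W_1\supseteq\cdots\supseteq W_k$ a \emph{$k$-chain} if each $W_l\in\widetilde{\mathcal{Y}_l}$ occurs as a piece in the chosen decomposition of $W_{l-1}$, and set
$$\Phi(W_0,\ldots,W_k):=\{x\in X:\ d_X(x,W_{l-1}\setminus W_l)\geqslant R_l\ \text{ for all }l=1,\ldots,k\},$$
with $\Phi(W_0)=X$. Let $\mathcal{Y}_k$ be the family of all sets $\Phi(W_0,\ldots,W_k)$ as the $k$-chain varies. A short check shows $\Phi(W_0,\ldots,W_k)\subseteq W_k$ (each inequality $\geqslant R_l>0$ rules out $x\in W_{l-1}\setminus W_l$, hence forces $x\in W_l$ inductively); in particular $\mathcal{Y}_n$ is bounded whenever $\widetilde{\mathcal{Y}_n}$ is, which is what makes the lemma useful.

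It then remains to verify the decomposition step $\mathcal{Y}_k\stackrel{R_{k+1}-R_k}{\longrightarrow}\mathcal{Y}_{k+1}$. Fix a $k$-chain and the piece $W:=\Phi(W_0,\ldots,W_k)\subseteq W_k$, and take the chosen decomposition $W_k=\bigcup_{i=0,1}\bigsqcup_j (W_k)^{(i)}_j$, whose associated cover of $W_k$ has Lebesgue number $\geqslant R_{k+1}$. Each $(W_0,\ldots,W_k,(W_k)^{(i)}_j)$ is a $(k+1)$-chain, and I claim
$$W=\bigcup_{i=0,1}\ \bigsqcup_j\ \Phi\big(W_0,\ldots,W_k,(W_k)^{(i)}_j\big),$$
with the $i$-th inner family $R_{k+1}$-disjoint. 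Coverage: for $x\in W\subseteq W_k$ the Lebesgue number gives $B_{W_k}(x,R_{k+1})\subseteq (W_k)^{(i)}_j$ for some $i,j$, i.e. $d_X(x,W_k\setminus (W_k)^{(i)}_j)\geqslant R_{k+1}$, which together with $x\in W$ says precisely $x\in\Phi(W_0,\ldots,W_k,(W_k)^{(i)}_j)$. Disjointness within color $i$ follows exactly as in the basic observation, using that $(W_k)^{(i)}_j$ and $(W_k)^{(i)}_{j'}$ are disjoint for $j\neq j'$. (Note this actually delivers parameter $R_{k+1}$, which is $\geqslant R_{k+1}-R_k$, so a fortiori the asserted bound holds.) Chaining these steps for $k=0,1,\ldots,n-1$, starting from $\mathcal{Y}_0=\{X\}$, produces the required sequence $X\stackrel{R_1}{\longrightarrow}\mathcal{Y}_1\stackrel{R_2-R_1}{\longrightarrow}\cdots\stackrel{R_n-R_{n-1}}{\longrightarrow}\mathcal{Y}_n$.

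The main obstacle, such as it is, is purely bookkeeping: one must phrase the shrinking so that a level-$(k+1)$ piece is still ``deep enough'' inside \emph{every} ancestor $W_0,\ldots,W_k$ for the later levels to function, and check that coverage is never lost --- the content being that the increasing thresholds $R_1<R_2<\cdots$ are exactly what absorbs the cumulative shrinking. A secondary nuisance is the routine matching of ``Lebesgue number $\geqslant R$'' with ``$R$-disjoint'': with the conventions that a cover has Lebesgue number $\geqslant R$ when every \emph{open} $R$-ball lies in some member, and that $R$-disjoint means infimum distance $\geqslant R$, the inequalities close up as above with no loss.
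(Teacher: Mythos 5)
Your proof is correct. It rests on the same basic mechanism as the paper's -- shrinking each piece $Z$ of a cover with Lebesgue number $\geqslant R$ down to the set of points whose distance to the complement of $Z$ is at least $R$, which converts coverage into $R$-disjointness -- but the bookkeeping is organized differently, and in a way that is arguably cleaner and quantitatively sharper. The paper nests its shrinkings: the level-$k$ pieces are obtained by shrinking inside the already-shrunk ambient set coming from level $k-1$, and the heart of its argument is the technical claim that the Lebesgue number of the cover $\{Z_{ij}\}$ only degrades from $R_{k}$ to $R_{k}-R_{k-1}$ when everything is restricted to the shrunk ambient set; this degradation is exactly why the paper's conclusion carries the parameters $R_k-R_{k-1}$. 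You instead record the entire chain of ancestors $W_0\supseteq\cdots\supseteq W_k$ and impose all depth conditions $d_X(x,W_{l-1}\setminus W_l)\geqslant R_l$ simultaneously, so that at each stage the Lebesgue-number hypothesis is applied to the \emph{original, unshrunk} cover of $W_k$. This bypasses the degradation lemma entirely and yields $R_{k+1}$-disjointness rather than $(R_{k+1}-R_k)$-disjointness, which gives the stated conclusion a fortiori. Both constructions keep each new piece inside an element of $\widetilde{\mathcal{Y}_k}$, so boundedness of $\widetilde{\mathcal{Y}_n}$ is inherited by $\mathcal{Y}_n$, which is what the lemma is ultimately used for; and both manifestly preserve $H$-invariance, as required by the subsequent Remark. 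One tiny point to make explicit: your displayed identity $W=\bigcup_{i}\bigsqcup_j\Phi\bigl(W_0,\ldots,W_k,(W_k)^{(i)}_j\bigr)$ needs, besides the coverage you verify, the trivial containment $\Phi(W_0,\ldots,W_{k+1})\subseteq\Phi(W_0,\ldots,W_k)$, which follows immediately since the longer chain imposes strictly more conditions.
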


\begin{proof}
Since $X \stackrel{R_1}{\rightsquigarrow} \widetilde{\mathcal{Y}_1}$, by definition, we have
$$X=\Big(\bigsqcup Y_{0j}\Big) \cup \Big(\bigsqcup Y_{1j}\Big),$$
where $Y_{ij} \in \widetilde{\mathcal{Y}_1}$, and the cover $\{Y_{ij}\}$ of $X$ has Lebesgue number $\geqslant R_1$, which implies
$$X=\Big(\bigsqcup_{R_1-disjoint}N_{-R_1,X}(Y_{0j})\Big) \cup \Big(\bigsqcup_{R_1-disjoint}N_{-R_1,X}(Y_{1j})\Big).$$
Define $\mathcal{Y}_1$ to be $\{N_{-R_1,X}(Y):Y \in \widetilde{\mathcal{Y}_1}\}$, then we have $X \stackrel{R_1}{\longrightarrow}  \mathcal{Y}_1$.

Now let's turn to the second step of the decomposition: $\widetilde{\mathcal{Y}_1} \stackrel{R_2}{\rightsquigarrow} \widetilde{\mathcal{Y}_2}$. In other words, for any $Y_1 \in \widetilde{\mathcal{Y}_1}$, we have the decomposition:
$$Y_1=\Big(\bigsqcup Z_{0j}\Big) \cup \Big(\bigsqcup Z_{1j}\Big)$$
for some $Z_{ij}\in \widetilde{\mathcal{Y}_2}$ and the cover $\{Z_{ij}\}$ of $Y_1$ has Lebesgue number $L(\{Z_{ij}\}) \geqslant R_2 > R_1$.

Now we claim: $\{N_{-R_1,X}(Z_{ij})\}$ is a cover of $N_{-R_1,X}(Y_1)$ with Lebesgue number not less than $R_2 - R_1$. In particular, we have the decomposition:
$$N_{-R_1,X}(Y_1)=\Big(\bigsqcup N_{-R_1,X}(Z_{0j})\Big) \cup \Big(\bigsqcup N_{-R_1,X}(Z_{1j})\Big).$$
In fact, since $L(\{Z_{ij}\}) \geqslant R_2$, for any $y \in N_{-R_1,X}(Y_1)$, there exists some $Z_{ij}$ such that $B(y,R_2) \cap Y_1 \subseteq Z_{ij}$. We want to show:
$$B(y,R_2-R_1) \cap N_{-R_1,X}(Y_1) \subseteq N_{-R_1,X}(Z_{ij}).$$
For any $z \in B(y,R_2-R_1) \cap N_{-R_1,X}(Y_1)$, we have $d(z,y) \leqslant R_2-R_1$, and $d(z, X \setminus Y_1) \geqslant R_1$. For any $w \in X \setminus (B(y,R_2) \cap Y_1)$, if $d(w,z)<R_1$, then $w \in Y_1$, and $d(w,y) \leqslant d(w,z)+d(z,y)<R_2$, which implies $w\in B(y,R_2) \cap Y_1$. This is a contradiction, so $d(z, X \setminus (B(y,R_2) \cap Y_1)) \geqslant R_1$. Thus from the choice of $Z_{ij}$, we obtain:
$$d(z,X \setminus Z_{ij}) \geqslant d(z, X \setminus (B(y,R_2) \cap Y_1) \geqslant R_1,$$
which implies $z\in N_{-R_1,X}(Z_{ij})$, so the claim holds.

Now use the same method as in the first step, we have:
$$N_{-R_1,X}(Y_1)=\Big(\bigsqcup N_{R_1-R_2,N_{-R_1,X}(Y_1)}\big( N_{-R_1,X}(Z_{0j})\big)\Big) \cup \Big(\bigsqcup N_{R_1-R_2,N_{-R_1,X}(Y_1)}\big( N_{-R_1,X}(Z_{1j})\big)\Big),$$
and the family $\{N_{R_1-R_2,N_{-R_1,X}(Y_1)}\big( N_{-R_1,X}(Z_{0j})\big)\}$ and $\{N_{R_1-R_2,N_{-R_1,X}(Y_1)}\big( N_{-R_1,X}(Z_{1j})\big)\}$ are \\
$(R_2-R_1)-$disjoint. Define:
$$\mathcal{Y}_2 = \{N_{R_1-R_2,N_{-R_1,X}(Y)}( N_{-R_1,X}(Z)): Z \in \widetilde{\mathcal{Y}_2}, Y \in \widetilde{\mathcal{Y}_1}, \mbox{and~}Z \subseteq Y\},$$
and we have $\mathcal{Y}_1  \stackrel{R_2-R_1}{\longrightarrow}  \mathcal{Y}_2$.

Inductively, we get a sequence of decompositions:
$$ X \stackrel{R_1}{\longrightarrow}  \mathcal{Y}_1  \stackrel{R_2-R_1}{\longrightarrow}  \cdots  \stackrel{R_n-R_{n-1}}{\longrightarrow}  \mathcal{Y}_n,$$
so the lemma holds.
\end{proof}

\begin{rem}
In the above two lemmas, when $X$ is a group, and if the original sequence of ordinary (or full) decompositions is $H-$invariant, then the obtained sequence of full (or ordinary) decompositions is still $H-$invariant, where $H$ is some subgroup.
\end{rem}

Combine the above two lemmas and the remark, we have the following characterizations of sFDC and equi-variant sFDC.
\begin{prop}\label{equ1}
Let $X$ be a metric space. Then $X$ has sFDC if and only if for any increasing sequence $R_1 < R_2 < \cdots < R_n < \cdots$, with $\lim_{n \rightarrow \infty}R_n = \infty$, there exists an integer number $m$ and a sequence of full decompositions:
$$X \stackrel{R_1}{\rightsquigarrow} \mathcal{Y}_1 \stackrel{R_2}{\rightsquigarrow}  \cdots  \stackrel{R_m}{\rightsquigarrow}  \mathcal{Y}_m,$$
such that the family $\mathcal{Y}_m$ is bounded.
\end{prop}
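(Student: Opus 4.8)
The plan is to read the proposition off from Lemma~\ref{dec lem1} and the lemma immediately following it, once a cosmetic rescaling of parameters is in place. Two easy reductions will be used throughout. First, a monotonicity remark: if $0<R'\leqslant R$, then any decomposition witnessing $\mathcal{X}\stackrel{R}{\rightarrow}\mathcal{Y}$ also witnesses $\mathcal{X}\stackrel{R'}{\rightarrow}\mathcal{Y}$, and likewise for full decompositions, since $R$-disjointness implies $R'$-disjointness and ``Lebesgue number $\geqslant R$'' implies ``Lebesgue number $\geqslant R'$''. Consequently, when verifying either sFDC or the full-decomposition condition it suffices to treat increasing sequences with $R_n\to\infty$: given a bounded increasing sequence $(S_n)$ with supremum $C$, apply the relevant statement to $S_n':=C+n$ and then downgrade each $S_n'$-step to an $S_n$-step. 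Second, in any finite sequence of (full) decompositions of a single space $X$ we may assume every intermediate family consists of subsets of $X$, by replacing $\mathcal{Y}_i$ with the family of pieces actually occurring in the chosen decompositions at the $i$-th stage; this preserves boundedness of the terminal family and is exactly the hypothesis required by Lemma~\ref{dec lem1} and its converse.

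For ``sFDC $\Rightarrow$ full-decomposition condition'', fix an increasing sequence $R_1<R_2<\cdots$ with $R_n\to\infty$. Since $4R_1<4R_2<\cdots$ also tends to infinity, sFDC gives $m$ and ordinary decompositions
$$X \stackrel{4R_1}{\longrightarrow} \mathcal{Y}_1 \stackrel{4R_2}{\longrightarrow} \cdots \stackrel{4R_m}{\longrightarrow} \mathcal{Y}_m$$
with $\mathcal{Y}_m$ bounded and all $\mathcal{Y}_i$ consisting of subsets of $X$. Feeding this into Lemma~\ref{dec lem1} produces full decompositions $X \stackrel{R_1}{\rightsquigarrow} \widetilde{\mathcal{Y}_1} \stackrel{R_2}{\rightsquigarrow} \cdots \stackrel{R_m}{\rightsquigarrow} \widetilde{\mathcal{Y}_m}$; and $\widetilde{\mathcal{Y}_m}$, being built from $2R_m$-neighborhoods of members of the bounded family $\mathcal{Y}_m$, has diameters bounded by $\sup_{Y\in\mathcal{Y}_m}\mathrm{diam}(Y)+4R_m<\infty$, so it is bounded. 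This is the desired witness.

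For the converse, let $(S_n)$ be an increasing sequence; by the monotonicity reduction assume $S_n\to\infty$. Set $R_n:=S_1+\cdots+S_n$, so $(R_n)$ is increasing, $R_n\to\infty$, $R_1=S_1$, and $R_n-R_{n-1}=S_n$ for $n\geqslant 2$. The hypothesis gives $m$ and full decompositions $X \stackrel{R_1}{\rightsquigarrow} \widetilde{\mathcal{Y}_1} \stackrel{R_2}{\rightsquigarrow} \cdots \stackrel{R_m}{\rightsquigarrow} \widetilde{\mathcal{Y}_m}$ with $\widetilde{\mathcal{Y}_m}$ bounded and all families consisting of subsets of $X$. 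Applying the converse of Lemma~\ref{dec lem1} yields ordinary decompositions
$$X \stackrel{R_1}{\longrightarrow} \mathcal{Y}_1 \stackrel{R_2-R_1}{\longrightarrow} \cdots \stackrel{R_m-R_{m-1}}{\longrightarrow} \mathcal{Y}_m ,$$
i.e.\ $X \stackrel{S_1}{\longrightarrow} \mathcal{Y}_1 \stackrel{S_2}{\longrightarrow} \cdots \stackrel{S_m}{\longrightarrow} \mathcal{Y}_m$, and $\mathcal{Y}_m$ is bounded because each of its members is contained in a member of $\widetilde{\mathcal{Y}_m}$. Thus $X$ has sFDC.

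There is no genuine obstacle here: the two lemmas carry the combinatorial content, and what remains is bookkeeping. The two places to be careful are the parameter arithmetic — the factor $4$ emerging from Lemma~\ref{dec lem1}, which forces feeding in $4R_n$ rather than $R_n$, and the telescoping choice $R_n=\sum_{k\leqslant n}S_k$ that converts the ``difference'' parameters of the converse lemma into the prescribed sequence $(S_n)$ — and the verification that the terminal families remain uniformly bounded after passing through the (positive or negative) neighborhood constructions. I expect the latter boundedness check to be the most error-prone step, though it is routine.
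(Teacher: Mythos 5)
Your proposal is correct and follows exactly the route the paper intends: Proposition \ref{equ1} is stated there as an immediate consequence of Lemma \ref{dec lem1} and its converse, and your argument just supplies the parameter bookkeeping (feeding in $4R_n$, telescoping $R_n=S_1+\cdots+S_n$, and checking the terminal family stays bounded) that the paper leaves implicit.
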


\begin{prop}\label{equ2}
Let $G$ be a residually finite group with a sequence of normal subgroups $\{N_i\}$ satisfying the conditions in \ref{res fin}. Then $G$ has equi-variant sFDC if and only if
for any increasing sequence $R_1 < R_2 < \cdots < R_n < \cdots$, with $\lim_{n \rightarrow \infty}R_n = \infty$, there exist integer numbers $m$ and $K$, and a sequence of full decompositions:
$$G \stackrel{R_1}{\rightsquigarrow} \mathcal{Y}_1 \stackrel{R_2}{\rightsquigarrow}  \cdots  \stackrel{R_m}{\rightsquigarrow}  \mathcal{Y}_m,$$
such that $\mathcal{Y}_m$ is bounded and the last full decomposition $\mathcal{Y}_{m-1}  \stackrel{R_m}{\rightsquigarrow}  \mathcal{Y}_m$ is $N_K$-invariant.
\end{prop}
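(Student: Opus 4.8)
The plan is to prove the two implications of Proposition~\ref{equ2} separately, in each case combining the two thickening/eroding lemmas above — Lemma~\ref{dec lem1} and its converse — with the Remark, exactly as the characterisation of sFDC in Proposition~\ref{equ1} is obtained from these ingredients; the only additional point is that one must also carry the $N_K$-invariance of the \emph{last} decomposition through the construction.

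Suppose first that $G$ has equi-variant sFDC, and let $R_1 < R_2 < \cdots$ with $R_n \to \infty$ be given. Apply the definition of equi-variant sFDC (Theorem~2, part~(2)) to the rescaled sequence $4R_1 < 4R_2 < \cdots$ to get integers $m,K$ and ordinary decompositions
$$G \stackrel{4R_1}{\longrightarrow} \mathcal{Z}_1 \stackrel{4R_2}{\longrightarrow} \cdots \stackrel{4R_m}{\longrightarrow} \mathcal{Z}_m$$
with $\mathcal{Z}_m$ bounded and the last decomposition $N_K$-invariant. Feeding this into Lemma~\ref{dec lem1} yields full decompositions
$$G \stackrel{R_1}{\rightsquigarrow} \widetilde{\mathcal{Z}_1} \stackrel{R_2}{\rightsquigarrow} \cdots \stackrel{R_m}{\rightsquigarrow} \widetilde{\mathcal{Z}_m},$$
and $\widetilde{\mathcal{Z}_m}$ is again bounded, since its members are $2R_m$-neighbourhoods (inside larger sets) of members of the bounded family $\mathcal{Z}_m$, hence have diameter at most $\sup_{Z\in\mathcal{Z}_m}\mathrm{diam}(Z)+4R_m$. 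By the Remark, applied to the last decomposition, the $N_K$-invariance of $\mathcal{Z}_{m-1}\stackrel{4R_m}{\longrightarrow}\mathcal{Z}_m$ is inherited by $\widetilde{\mathcal{Z}_{m-1}}\stackrel{R_m}{\rightsquigarrow}\widetilde{\mathcal{Z}_m}$, so the full-decomposition condition of Proposition~\ref{equ2} holds.

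Conversely, assume that condition, and let $R_1<R_2<\cdots$ with $R_n\to\infty$ be given. Apply the condition to the partial sums $S_n:=R_1+\cdots+R_n$, an increasing sequence with $S_n\to\infty$ and $S_n-S_{n-1}=R_n$ (setting $S_0=0$); this produces integers $m,K$ and full decompositions $G\stackrel{S_1}{\rightsquigarrow}\mathcal{Y}_1\rightsquigarrow\cdots\stackrel{S_m}{\rightsquigarrow}\mathcal{Y}_m$ with $\mathcal{Y}_m$ bounded and the last full decomposition $N_K$-invariant. Applying the converse of Lemma~\ref{dec lem1} gives ordinary decompositions
$$G\stackrel{S_1}{\longrightarrow}\mathcal{Z}_1\stackrel{S_2-S_1}{\longrightarrow}\cdots\stackrel{S_m-S_{m-1}}{\longrightarrow}\mathcal{Z}_m,$$
that is, a sequence with parameters $R_1,R_2,\dots,R_m$; here $\mathcal{Z}_m$ is bounded, being obtained by eroding members of the bounded family $\mathcal{Y}_m$ (hence consisting of subsets of them), and by the Remark the last decomposition is again $N_K$-invariant. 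Thus $G$ has equi-variant sFDC.

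The step demanding the most care — and the main obstacle — is the equivariant part of the Remark: that the thickening/eroding operations preserve $N_K$-invariance of the final decomposition. Two facts drive this. First, since $N_K\lhd G$, for any $N_K$-invariant subset $Z\subseteq G$ and $h\in N_K$ one has $d_G(xh,Z)=d_G(x,Z)$ for all $x$ (write $z=(xhx^{-1})z'$ with $xhx^{-1}\in N_K$, $z'\in Z$, using $N_KZ=Z=ZN_K$), so the positive and negative $r$-neighbourhoods of an $N_K$-invariant set — taken in $G$ or in any $N_K$-invariant ambient set — are $N_K$-invariant. Second, if the last decomposition of a sequence is $N_K$-invariant then, by a downward induction (a union of an $N_K$-invariant family of subsets is $N_K$-invariant), every preceding family in the sequence consists of $N_K$-invariant subsets of $G$; hence all ambient sets appearing in the two lemmas are $N_K$-invariant. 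The residual subtlety is that the pieces of the final decomposition need only form $N_K$-invariant \emph{families}, not be individually $N_K$-invariant; to deal with this one transports the final step through the quotient map $\pi_K:G\to G/N_K$ — after first enlarging $K$ via Lemma~\ref{basic isometry} so that $\pi_K$ is isometric on the ball governing that step — performs the thickening (resp. eroding) in $G/N_K$, and pulls back along $\pi_K$: preimages under $\pi_K$ are automatically $N_K$-invariant, and $\pi_K$, being contracting and isometric on that ball, controls the Lebesgue numbers, disjointness constants and diameters. (Alternatively, one re-proves the two lemmas keeping track of $N_K$-orbits.)
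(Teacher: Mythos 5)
Your argument is correct and follows exactly the route the paper intends for Proposition \ref{equ2}: combine Lemma \ref{dec lem1}, its converse, and the Remark, with the parameter bookkeeping (passing to $4R_i$ in one direction and to the partial sums $S_n$ in the other) that you carry out explicitly. Your closing discussion of why thickening/eroding preserves $N_K$-invariance of the last decomposition addresses a point the paper's Remark leaves implicit, and your proposed fix via the quotient map $\pi_K$ is consistent with how the paper itself handles invariance in the proof of Theorem 2.
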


Now we have the tools in hand to prove Theorem 2.

\begin{proof}[Proof of Theorem 2]
~\\
$(1) \Rightarrow (2)$:
By the assumption and Proposition \ref{equ1}, given an increasing sequence:
$$R_1 < R_2 < \cdots < R_n < \cdots,$$
with $\lim\limits_{n \rightarrow \infty}R_n = \infty$, there exists an integer number $m$ and a sequence of full decompositions:
$$\square_{\{N_i\}}G \stackrel{R_1}{\rightsquigarrow} \mathcal{Y}_1 \stackrel{R_2}{\rightsquigarrow}  \cdots  \stackrel{R_m}{\rightsquigarrow}  \mathcal{Y}_m,$$
such that the family $\mathcal{Y}_m$ is bounded.

Since $\mathcal{Y}_m$ is bounded, we can find a constant $c$ with $c>\sup\{diam(Y): Y\in \mathcal{Y}_m\}$. We can also assume $c>R_m$.
Denote the natural projection map by $\pi_j: G \rightarrow G/N_j$.
Pick $K$ large enough such that $N_K \cap B_G(1_G,2c)=\{1_G\}$, which implies that for any $Y \subseteq G$ with $diam(Y) < c$, $\pi_K|_Y: Y \rightarrow \pi_K(Y)$ is an isometric bijection.  Also assume $d(\bigsqcup\limits_{j=1}^{K-1} G/N_j, G/N_K)\geqslant c$.

By the above conditions, for any $Y \in \mathcal{Y}_m$, if $Y \cap (G/N_K) \neq \emptyset$, then $Y \subseteq G/N_K$, and its pullback in $G$ has the form:
$$\pi_K^{-1}(Y)=\bigsqcup\limits_{h\in N_K} U_Y\cdot h,$$
where $U_Y$ is a subset in $G$ such that the restriction of $\pi_K$ on $U_Y$ is isometric.
Define the metric family $\widetilde{\mathcal{Y}_m}$ to be $\{U_Y \cdot h: Y\in \mathcal{Y}_m \mbox{~and~} Y \subseteq G/N_K, h\in N_K\}$, and $\widetilde{\mathcal{Y}_i}$ to be $\{\pi_K^{-1}(Y' \cap (G/N_K)):Y'\in \mathcal{Y}_i \mbox{~and~} Y' \cap (G/N_K) \neq \emptyset\}$ for $1\leqslant i\leqslant m-1$. We claim there exists a sequence of full decompositions as follows:
$$ G \stackrel{R_1}{\rightsquigarrow}  \widetilde{\mathcal{Y}_1}  \stackrel{R_2}{\rightsquigarrow}  \cdots  \stackrel{R_m}{\rightsquigarrow}  \widetilde{\mathcal{Y}_m},$$
and $\widetilde{\mathcal{Y}_{m-1}} \stackrel{R_m}{\rightsquigarrow}  \widetilde{\mathcal{Y}_m}$ is $G_K$-invariant.

In fact, for any $Y' \in \mathcal{Y}_{m-1}$ such that $Y' \cap (G/N_K) \neq \emptyset$, by assumption, we have a decomposition:
$$Y' \cap (G/N_K)=\Big(\bigsqcup_j Y'_{0j}\Big) \cup \Big(\bigsqcup_j Y'_{1j}\Big),$$
for some $Y'_{ij} \in \mathcal{Y}_m$ and $Y'_{ij} \subseteq G/N_K$. So
$$\pi_K^{-1}(Y' \cap (G/N_K)) = \Big(\bigsqcup_j \pi_K^{-1}(Y'_{0j})\Big) \cup \Big(\bigsqcup_j \pi_K^{-1}(Y'_{1j})\Big).$$
By the above analysis, $\pi_K^{-1}(Y'_{ij}) = \bigsqcup\limits_{h\in N_K} U_{Y'_{ij}}\cdot h$ for some $U_{Y'_{ij}} \subseteq G$, so we have:
$$\pi_K^{-1}(Y' \cap (G/N_K)) = \Big(\bigsqcup_{\mbox{\scriptsize$\begin{array}{c} h \in N_K \\ j\in \mathbb{N} \end{array}$}}U_{Y'_{0j}}\cdot h \Big) \cup \Big(\bigsqcup_{\mbox{\scriptsize$\begin{array}{c} h \in N_K \\ j\in \mathbb{N} \end{array}$}} U_{Y'_{1j}}\cdot h \Big).$$
And the cover $\{U_{Y'_{ij}}\cdot h: i=0,1; j \in \mathbb{N}; h\in N_K\}$ of $\pi_K^{-1}(Y' \cap (G/N_K))$ has Lebesgue number not less than $R_m$.
This implies $\widetilde{\mathcal{Y}_{m-1}} \stackrel{R_m}{\rightsquigarrow}  \widetilde{\mathcal{Y}_m}$ is $G_K$-invariant.

Now for any $Y'' \in \mathcal{Y}_{m-2}$ such that $Y'' \cap (G/N_K) \neq \emptyset$, by assumption, we have a decomposition:
$$Y'' \cap (G/N_K)=\Big(\bigsqcup_j Y''_{0j}\cap (G/N_K)\Big) \cup \Big(\bigsqcup_j Y''_{1j}\cap (G/N_K)\Big),$$
for some $Y''_{ij} \in \mathcal{Y}_{m-1}$, which implies
$$\pi_K^{-1}(Y'' \cap (G/N_K)) = \Big(\bigsqcup_j \pi_K^{-1}(Y''_{0j} \cap (G/N_K))\Big) \cup \Big(\bigsqcup_j \pi_K^{-1}(Y''_{1j} \cap (G/N_K))\Big).$$
And the cover $\{\pi_K^{-1}(Y''_{ij} \cap (G/N_K)): i=0,1; j \in \mathbb{N}\}$ of $\pi_K^{-1}(Y'' \cap (G/N_K))$ has Lebesgue number not less than $R_{m-1}$.
Inductively, the claim holds. Now by Proposition \ref{equ2}, $G$ has equi-variant sFDC.

$(2) \Rightarrow (1)$:
By assumption and Proposition \ref{equ2}, given an increasing sequence:
$$R_1 < R_2 < \cdots < R_n < \cdots$$
with $\lim\limits_{n \rightarrow \infty}R_n = \infty$, there exist integer numbers $m$, $K$, and a sequence of full decompositions:
$$G \stackrel{R_1}{\rightsquigarrow} \mathcal{Z}_1 \stackrel{R_2}{\rightsquigarrow}  \cdots  \stackrel{R_m}{\rightsquigarrow}  \mathcal{Z}_m,$$
such that the family $\mathcal{Z}_m$ is bounded and the last full decomposition $\mathcal{Z}_{m-1}  \stackrel{R_m}{\rightsquigarrow}  \mathcal{Z}_m$ is $N_K$-invariant.

As in the above step, choose a constant $c$ with $c>\sup\{diam(Z): Z\in \mathcal{Z}_m\}$ and $c>R_m$. Pick $K' \geqslant K$ sufficiently large such that $N_{K'} \cap B_G(1_G,2c)=\{1_G\}$, and $d(\bigsqcup\limits_{j=1}^{K'-1} G/N_j, G/N_{K'})\geqslant c$. Denote the natural projection map by $\pi_j: G \rightarrow G/N_j$.

Since $\mathcal{Z}_{m-1} \stackrel{R_m}{\rightsquigarrow}  \mathcal{Z}_m$ is $N_{K'}$-invariant, we can assume for any $Z\in \mathcal{Z}_{m-1}$, there exists a decomposition of the form:
$$Z=\Big(\bigsqcup_{\mbox{\scriptsize$\begin{array}{c} h\in N_{K'} \\ j\in \mathbb{N} \end{array}$}}U^{(K')}_{Z,j}\cdot h\Big) \cup \Big(\bigsqcup_{\mbox{\scriptsize$\begin{array}{c} h\in N_{K'} \\ j\in \mathbb{N} \end{array}$}}V^{(K')}_{Z,j}\cdot h\Big),$$
for some $U^{(K')}_{Z,j},V^{(K')}_{Z,j}\in \mathcal{Z}_m$, which implies for any $h\in N_{K'}$, $Z \cdot h = Z$. Obviously, we have the decomposition:
$$\pi_{K'}(Z)=\Big(\bigsqcup_j \pi_{K'}(U^{(K')}_{Z,j})\Big) \cup \Big(\bigsqcup_j \pi_{K'}(V^{(K')}_{Z,j})\Big).$$
And by the choice of $c$ and $K'$, the cover $\{\pi_{K'}(U^{(K')}_{Z,j}),\pi_{K'}(V^{(K')}_{Z,j}):j \in \mathbb{N}~\}$ of $\pi_{K'}(Z)$ has Lebesgue number not less than $R_m$. Similarly, for any $k\geqslant K'$, we have:
$$Z=\Big(\bigsqcup_{\mbox{\scriptsize$\begin{array}{c} h\in N_k \\ j\in \mathbb{N} \end{array}$}}U^{(k)}_{Z,j}\cdot h\Big) \cup \Big(\bigsqcup_{\mbox{\scriptsize$\begin{array}{c} h\in N_k \\ j\in \mathbb{N} \end{array}$}}V^{(k)}_{Z,j}\cdot h\Big),$$
for some $U^{(k)}_{Z,j},V^{(k)}_{Z,j}\in \mathcal{Z}_m$, which implies a decomposition:
\begin{equation}\label{dec equ1}
\pi_k(Z)=\Big(\bigsqcup_j \pi_k(U^{(k)}_{Z,j})\Big) \cup \Big(\bigsqcup_j \pi_k(V^{(k)}_{Z,j})\Big),
\end{equation}
and the cover $\{\pi_k(U^{(k)}_{Z,j}),\pi_k(V^{(k)}_{Z,j}):j~\}$ of $\pi_k(Z)$ has Lebesgue number not less than $R_m$.

Now define $\widetilde{\mathcal{Z}_i} = \bigcup\limits_{k\geqslant K'}\pi_k(\mathcal{Z}_i) \cup \{\bigsqcup\limits_{j=1}^{K'-1} G/N_j\}$ for $1\leqslant i \leqslant m$.
By Equation (\ref{dec equ1}), we have:
$$\widetilde{\mathcal{Z}_{m-1}} \stackrel{R_m}{\rightsquigarrow}  \widetilde{\mathcal{Z}_m}.$$
Now for any $Z' \in \mathcal{Z}_{m-2}$, by assumption, we have a decomposition:
$$Z'=\Big(\bigsqcup_j U'_{Z,j}\Big) \cup \Big(\bigsqcup_j V'_{Z,j}\Big),$$
for some $U'_{Z,j}, V'_{Z,j} \in \mathcal{Z}_{m-1}$. So for any $h\in N_{K'}$, we have:
$$U'_{Z,j} \cdot h = U'_{Z,j},~~V'_{Z,j} \cdot h = V'_{Z,j},$$
which implies there exists a decomposition:
$$\pi_k(Z')=\Big(\bigsqcup_j \pi_k(U'_{Z,j})\Big) \cup \Big(\bigsqcup_j \pi_k(V'_{Z,j})\Big)$$
for any $k \geqslant K'$, and the Lebesgue number of the cover $\{\pi_k(U'_{Z,j}), \pi_k(V'_{Z,j}):j \in \mathbb{N}~\}$ is not less than $R_{m-1}$.
Inductively, we get the following sequence of full decompositions:
$$\square_{\{N_i\}}G  \stackrel{R_1}{\rightsquigarrow}  \widetilde{\mathcal{Z}_1}  \stackrel{R_2}{\rightsquigarrow}  \cdots  \stackrel{R_m}{\rightsquigarrow}  \widetilde{\mathcal{Z}_m}.$$
By Proposition \ref{equ1}, $\square_{\{N_i\}}G$ has sFDC.
\end{proof}

\begin{cor}
If a residually finite group has equi-variant sFDC, then it's amenable.
\end{cor}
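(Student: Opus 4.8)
The plan is to obtain amenability by chaining together three results: Theorem~2 (the equivalence \ref{main2}), the implication sFDC $\Rightarrow$ Property~A from Proposition~\ref{sFDC imp A}, and E.~Guentner's classical characterization of amenability via box spaces. Concretely, suppose $G$ is residually finite and has equi-variant sFDC with respect to a sequence $\{N_i\}$ of finite-index normal subgroups with trivial intersection. First I would apply Theorem~2 to conclude that the box space $\square_{\{N_i\}}G$ has sFDC. Since $\square_{\{N_i\}}G$ is an honest metric space, Proposition~\ref{sFDC imp A} then yields that $\square_{\{N_i\}}G$ has Property~A. Finally, by Guentner's theorem (see \cite{Roe03}), a residually finite group is amenable if and only if its box space has Property~A, and hence $G$ is amenable.

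Checking that each step is legitimate is routine. Both sFDC and Property~A are coarse invariants, so it is immaterial whether one works with the specific metric on $\square_{\{N_i\}}G$ from Definition~\ref{box def} or with any coarsely equivalent ``coarse disjoint union'' model of the box space; in particular the version of the box space to which Guentner's result is usually stated is coarsely equivalent to ours. One only needs the forward implication of Theorem~2 and only the ``Property~A of the box space $\Rightarrow$ $G$ amenable'' direction of Guentner's characterization, so no additional hypotheses beyond those already in force (groups discrete and countable, with a proper left-invariant length metric) are required.

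There is no real obstacle here: the corollary is a direct concatenation of facts established earlier in the paper together with one standard external input. The only point that warrants care is citing Guentner's characterization of amenability in the form appropriate to the box space as defined here; once that is in place, the proof is a two-line deduction: equi-variant sFDC $\Rightarrow$ box space has sFDC $\Rightarrow$ box space has Property~A $\Rightarrow$ $G$ is amenable.
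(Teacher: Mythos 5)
Your proof is correct and is exactly the argument in the paper: apply Theorem~2 to get sFDC for the box space, invoke Proposition~\ref{sFDC imp A} to pass to Property~A, and conclude amenability by Guentner's characterization. No differences worth noting.
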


\begin{proof}
By Theorem 2, we know its the box space has sFDC, which implies the box space has Property A by Proposition \ref{sFDC imp A}. Now by the result of E. Guentner (\cite{Roe03}, which we have also mentioned at the beginning of this section), we obtain that the group is amenable.
\end{proof}

Now a natural question arises: does there exists any "non-trivial" residually finite group having equi-variant sFDC? Furthermore, one can ask: does there exist any "non-trivial" residually finite group whose box space has FDC? To answer these questions, we will introduce a new concept of decomposition complexity in the next section.

\section{Stable FDC}
At the end of the above section, we put two questions. In this section, we will answer them by introducing a new concept called stable FDC. This property is preserved under extension and direct union, and it's actually the motivation we introduce it.

\begin{defn}
A group $G$ is called to have \emph{stable FDC}, if the metric family $\{H/K: K \lhd H \leqslant G\}$ has FDC in the normal sense. Here we use $H \leqslant G$ and $K \lhd H$ to represent that $H$ is a subgroup in $G$ and $K$ is a normal subgroup in $H$, respectively. We call $\{H/K: K \lhd H \leqslant G\}$ \emph{the family associated with $G$}.
\end{defn}

To avoid ambiguities, we explain the metric on the family $\{H/K: K \lhd H \leqslant G\}$ in detail. First, equip $G$ with any proper length function, and define the metric on $H$ to be the induced metric, and the one on $H/K$ to be the quotient metric. For analysis on these metric, see Section 2.2. To make the definition of stable FDC proper, we need to show it's independent of the proper length function we choose on $G$.

\begin{lem}\label{coarse equi}
Let $G$ be a group, and $l_1,l_2$ be two proper length functions on $G$. Then the metric family $\{H/K:K \lhd H \leqslant G\}$ induced by $l_1$ and the one induced by $l_2$ are coarsely equivalent. Consequently, stable FDC is independent of the length function on $G$.
\end{lem}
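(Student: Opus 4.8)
The plan is to reduce the statement to the known fact (recalled in Section 2.2, due to Gromov) that for a single group the identity map between the two left-invariant metrics induced by $l_1$ and $l_2$ is a coarse equivalence, and then check that the coarse-equivalence data descends compatibly to all subgroups and all quotients. First I would record the elementary observation that if $\operatorname{id}\colon (G,l_1)\to(G,l_2)$ is a coarse equivalence, witnessed by a non-decreasing proper function $\rho$ with $B_{(G,l_1)}(1,R)\subseteq B_{(G,l_2)}(1,\rho(R))$ for all $R$ (and symmetrically with $l_1,l_2$ swapped, say by $\rho'$), then for \emph{every} subgroup $H\leqslant G$ the same inclusion holds for the induced length functions, because $B_{(H,l_i)}(1,R)=B_{(G,l_i)}(1,R)\cap H$. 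So the family of identity maps $\{\operatorname{id}_H\colon(H,l_1)\to(H,l_2)\}$ is uniformly expansive and effectively proper with the single pair of control functions $\rho,\rho'$ — uniformly over all $H$.

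Next I would push this through the quotients using Lemma \ref{ball}. For $K\lhd H$, the quotient length $\bar l_i$ on $H/K$ satisfies $\pi\big(B_{(H,l_i)}(1,R)\big)=B_{(H/K,\bar l_i)}(1,R)$ by Lemma \ref{ball}. Combining this with the inclusion for $H$ gives, for every $R>0$,
\[
B_{(H/K,\bar l_1)}(1,R)=\pi\big(B_{(H,l_1)}(1,R)\big)\subseteq \pi\big(B_{(H,l_2)}(1,\rho(R))\big)=B_{(H/K,\bar l_2)}(1,\rho(R)),
\]
and symmetrically with $\rho'$. By Lemma \ref{bornol} (applied to the identity homomorphism $H/K\to H/K$, in both directions) this shows $\operatorname{id}\colon(H/K,\bar l_1)\to(H/K,\bar l_2)$ is bornologous and its inverse is bornologous, hence it is a coarse equivalence; moreover the control functions $\rho,\rho'$ do not depend on the choice of $H$ or $K$. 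Therefore the collection $\mathcal F=\{\operatorname{id}_{H/K}\}$ is a map of families $\{H/K:K\lhd H\leqslant G\}_{l_1}\to\{H/K:K\lhd H\leqslant G\}_{l_2}$ which is a coarse equivalence in the sense of the family definition: it is uniformly expansive and effectively proper (with the uniform control functions above), the inverse family $\{\operatorname{id}_{H/K}\}$ going the other way is likewise a coarse embedding, and the displacement constant in condition (2) is simply $0$ since all maps are genuine identities.

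Having produced a coarse equivalence of the two associated metric families, the final sentence follows immediately from Proposition \ref{coarse invariance of asdim}(2): one family has FDC if and only if the other does, so whether $G$ has stable FDC is independent of the proper length function chosen on $G$. The only mild subtlety — and the one point I would be careful to state explicitly rather than gloss over — is the \emph{uniformity} of the control functions across the whole family: one must observe that Gromov's coarse equivalence for $(G,l_1),(G,l_2)$ yields a \emph{single} $\rho$ (and $\rho'$) that then serves simultaneously for every subgroup and every quotient, which is exactly what the definition of coarse equivalence of families requires. Everything else is bookkeeping with Lemmas \ref{ball} and \ref{bornol}.
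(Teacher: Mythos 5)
Your proposal is correct and follows essentially the same route as the paper's proof: compare balls in $(G,l_1)$ and $(G,l_2)$ via a single pair of proper control functions, intersect with each subgroup $H$, push down to $H/K$ using Lemma \ref{ball}, and conclude with Lemma \ref{bornol}. Your explicit remark about the uniformity of the control functions across the whole family is a point the paper leaves implicit, but the argument is the same.
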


\begin{proof}
In Section 2.2, we have already known that $(G,l_1)$ and $(G,l_2)$ are coarsely equivalent. Furthermore, it's easy to construct two proper functions $f,g:[0,+\infty) \rightarrow \mathbb{R}$ such that for any $R>0$, we have:
$$B_{(G,l_2)}(1_G,g(R)) \subseteq B_{(G,l_1)}(1_G,R) \subseteq B_{(G,l_2)}(1_G,f(R)).$$
For any subgroup $H$ in $G$, we intersect every item in the above inequality with $H$:
$$B_{(H,l_2)}(1_H,g(R)) \subseteq B_{(H,l_1)}(1_H,R) \subseteq B_{(H,l_2)}(1_H,f(R)).$$
Then for any normal subgroup $K$ in $H$, by Lemma \ref{ball}, we have:
$$B_{(H/K,\bar{l}_2)}(1_{H/K},g(R)) \subseteq B_{(H/K,\bar{l}_1)}(1_{H/K},R) \subseteq B_{(H/K,\bar{l}_2)}(1_{H/K},f(R)).$$
From Lemma \ref{bornol} and  the above inequality, the lemma holds.
\end{proof}

From the definition, it's obvious that stable FDC is preserved by taking subgroups:
\begin{lem}
Let $G$ be a group with stable FDC. For any subgroup $H$ in $G$, $H$ also has stable FDC.
\end{lem}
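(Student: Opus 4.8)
The statement to prove is: if $G$ has stable FDC, then every subgroup $H \leqslant G$ also has stable FDC.

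The plan is to observe that this is essentially a tautology once one unwinds the definitions, combined with the permanence of FDC under passing to subspaces of metric families (Proposition \ref{subspace for asdim}(2)). The key point is a set-theoretic containment between the family associated with $H$ and the family associated with $G$.

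First I would fix a proper length function on $G$, use it to define the induced metric on every subgroup, and the quotient metric on every quotient; by Lemma \ref{coarse equi} the conclusion does not depend on this choice, so we may work with it throughout, and we restrict the same length function to $H$ to compute the metrics appearing in the family associated with $H$. Next, I would note that for any $K \lhd J \leqslant H$ we automatically have $J \leqslant G$ (subgroups of $H$ are subgroups of $G$), so $J/K$ is one of the members of the family $\{L/M : M \lhd L \leqslant G\}$ associated with $G$; moreover the metric on $J/K$ as a member of the $H$-family coincides with its metric as a member of the $G$-family, since the induced metric on $J$ from $H$ equals the induced metric on $J$ from $G$ (both restrict the fixed length function), and the quotient metric on $J/K$ depends only on the metric on $J$. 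Hence the family associated with $H$ is literally a subfamily of the family associated with $G$.

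Then I would invoke Proposition \ref{subspace for asdim}(2): the family associated with $G$ has FDC by hypothesis, and a subspace (here even a literal subfamily) of a metric family with FDC again has FDC. This gives that the family associated with $H$ has FDC, i.e. $H$ has stable FDC. There is essentially no obstacle here — the only point requiring a line of care is checking that the metrics match up, i.e. that a quotient $J/K$ carries the same metric whether we build it inside $H$ or inside $G$, which follows immediately from the description of the quotient metric in Section 2.2 together with the left-invariance of the restricted length function. One should also remark that "FDC in the normal sense" for the subfamily is inherited, since any normal (equi-variant) decomposition of the larger family restricts to one of the subfamily; but for the bare statement of this lemma only the FDC part is needed.
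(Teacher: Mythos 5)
Your proof is correct and is exactly the argument the paper has in mind: the paper states this lemma without proof, remarking only that it is ``obvious from the definition,'' and the intended reason is precisely your observation that the family associated with $H$ is a subfamily of the family associated with $G$ (with matching metrics, since the induced and quotient metrics depend only on the restricted length function), so Proposition \ref{subspace for asdim}(2) applies.
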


Stable FDC is also preserved by taking quotient:
\begin{lem}\label{quotient}
Let $G$ be a group and $N_0$ be a normal subgroup in $G$. Suppose $G$ has stable FDC, then $G/N_0$ also has stable FDC.
\end{lem}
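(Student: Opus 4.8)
The plan is to show that the family associated with $G/N_0$ is a subspace (in the sense of metric families) of the family associated with $G$, and then invoke Proposition \ref{subspace for asdim}(2) — being careful that here the normal version of FDC is at stake, so I will instead argue directly at the level of decompositions, exactly as in the ordinary subspace proof, preserving invariance.

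The key observation is the following correspondence. Every subgroup of $G/N_0$ has the form $H/N_0$ for a subgroup $H$ with $N_0 \leqslant H \leqslant G$, and every normal subgroup of $H/N_0$ has the form $K/N_0$ for a normal subgroup $K$ of $H$ containing $N_0$. Moreover the third isomorphism theorem gives a bijection $(H/N_0)/(K/N_0) \to H/K$, and I claim this bijection is an isometry when both sides carry the quotient metrics coming from a fixed proper length function on $G$ (for $H/N_0 \leqslant G/N_0$ we use the quotient length $\overline{l}$ on $G/N_0$, and for $H \leqslant G$ we use $l$ itself). This follows by unwinding the definitions in Section 2.2: the length of the class of $gN_0$ in $(H/N_0)/(K/N_0)$ is $\min$ of $\overline{l}$ over the coset $gK/N_0$, which by definition of $\overline{l}$ equals $\min_{k \in K} l(gk)$, which is precisely the length of $gK$ in $H/K$. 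Hence the family $\mathcal{F}_{G/N_0} = \{(H/N_0)/(K/N_0)\}$ associated with $G/N_0$ is isometric to the subfamily $\{H/K : N_0 \leqslant K \lhd H \leqslant G\}$ of the family $\mathcal{F}_G$ associated with $G$.

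With this identification in hand, I would finish as follows. Fix $R > 0$. Since $G$ has stable FDC, $\mathcal{F}_G \in \mathcal{D}_\alpha$ for some ordinal $\alpha$, and moreover the decompositions witnessing membership can be taken in the normal sense — meaning each piece is again of the form $H'/K'$ with the relevant invariance. Restricting such a decomposition to the members $H/K$ with $N_0 \leqslant K$, one checks that all the subsets appearing in the decomposition again lie in the subfamily indexed by subgroups containing $N_0$ (this is where one uses that $N_0$ is preserved: the decompositions produced in the proof that $\mathcal{F}_G$ has normal FDC involve only cosets of subgroups, and starting from a group containing $N_0$ the pieces continue to contain $N_0$). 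Transporting everything through the isometry above yields a decomposition of $\mathcal{F}_{G/N_0}$ of the same complexity, in the normal sense. Therefore $\mathcal{F}_{G/N_0}$ has normal FDC, i.e.\ $G/N_0$ has stable FDC. By Lemma \ref{coarse equi} this conclusion does not depend on the chosen length function.

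The main obstacle is the verification that the third-isomorphism-theorem bijection is genuinely an isometry for the quotient metrics, and — more delicately — that the \emph{normal} (invariance-respecting) decomposition of $\mathcal{F}_G$ restricts to one of the subfamily $\{H/K : N_0 \leqslant K \lhd H \leqslant G\}$; one must make sure the notion of ``normal'' decomposition used to define stable FDC is set up so that the pieces of a decomposition of $H/K$ are again quotients of subgroups (rather than arbitrary subsets), so that the subfamily is closed under the decomposition operation. Once the bookkeeping of indices is pinned down, the argument is a routine transport-of-structure along the isometry together with Proposition \ref{subspace for asdim}(2).
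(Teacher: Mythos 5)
Your first paragraph is exactly the paper's argument: every subgroup of $G/N_0$ is $H/N_0$ with $N_0\leqslant H\leqslant G$, every normal subgroup of $H/N_0$ is $K/N_0$ with $N_0\leqslant K\lhd H$, and the third isomorphism theorem identifies the family associated with $G/N_0$ with the subfamily $\{H/K:N_0\leqslant K\lhd H\leqslant G\}$ of the family associated with $G$. Your computation that this identification is an isometry for the quotient metrics is correct and is in fact a sharpening of the paper's verification (the paper only checks, via Lemma \ref{ball} and Lemma \ref{bornol}, that it is a coarse equivalence). At that point you are already done: Proposition \ref{subspace for asdim}(2) says a subspace of a metric family with FDC has FDC, and Proposition \ref{coarse invariance of asdim}(2) transports this back along your isometry.

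The rest of your proposal rests on a misreading. In the definition of stable FDC the phrase ``has FDC in the normal sense'' means ``has FDC in the usual sense'': plain FDC of the metric family $\{H/K\}$, with no invariance or equivariance condition on the decompositions. There is no ``normal version of FDC'' in play here (the $H$-invariant decompositions appear only in Section~3, in the discussion of equi-variant sFDC, not in the definition of stable FDC). In particular your claim that ``the decompositions witnessing membership can be taken \ldots\ meaning each piece is again of the form $H'/K'$'' is both unjustified --- the pieces $X_{ij}$ in Definition \ref{dec def} are arbitrary subsets, and nothing forces them to be coset spaces --- and unnecessary. The ``main obstacle'' you identify, namely that the subfamily be closed under the decomposition operation, is not an obstacle at all once this phantom requirement is dropped: subspace permanence of FDC needs no such closure. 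Delete that second stage of the argument and replace it with a direct appeal to Propositions \ref{subspace for asdim}(2) and \ref{coarse invariance of asdim}(2); what remains is precisely the paper's proof.
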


\begin{proof}
Any subgroup in $G/N_0$ is of the form $H/N_0$ for some subgroup $H$ in $G$ containing $N_0$, and any normal subgroup in $H/N_0$ is of the form $K/N_0$ for some normal subgroup $K$ in $H$ containing $N_0$. We have the natural isomorphism:
$$\psi_{H,K}:(H/N_0)\big/(K/N_0) \cong H/K.$$
So we get a map of families $\psi:\{(H/N_0)\big/(K/N_0): K \lhd H \leqslant G\} \rightarrow \{H/K:K \lhd H \leqslant G\}$.
By Lemma \ref{ball}, $\psi_{H,K}$ maps the ball $B_{(H/N_0)\big/(K/N_0)}(1,R)$ to the ball $B_{H/K}(1,R)$ for any $R>0$.
Using Lemma \ref{bornol} twice with respect to $\psi$ and $\psi^{-1}$, we see that $\psi$ is coarsely equivalent. Since $\{H/K:K \lhd H \leqslant G\}$ has FDC, $\{(H/N_0)\big/(K/N_0):K \lhd H \leqslant G\}$ also has FDC. In other words, $G/{N_0}$ has stable FDC.
\end{proof}

Now we prove stable FDC is preserved under extension and direct union.
\begin{prop}\label{extension}
Let $1 \rightarrow N \rightarrow G \rightarrow Q \rightarrow 1$ be an extension, and $N,Q$ have stable FDC. Then $G$ also has stable FDC.
\end{prop}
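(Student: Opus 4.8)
The plan is to prove that the family associated with $G$, namely $\mathcal{G} = \{H/K : K \lhd H \leqslant G\}$, has FDC by fibering it over the family associated with $Q$ using the Fibering Theorem (Proposition \ref{fibering}). The natural candidate for the map of families is induced by the quotient $\pi : G \rightarrow Q = G/N$. Given any subgroup $H \leqslant G$ and any normal subgroup $K \lhd H$, I would like to produce from $H/K$ a corresponding quotient of a subgroup of $Q$ together with a bornologous map $H/K \rightarrow (\text{that quotient})$; the obvious choice is $\bar H := \pi(H) = HN/N \leqslant Q$ and $\bar K := \pi(KN) = KN/N$, noting $\bar K \lhd \bar H$ since $K \lhd H$ implies $KN \lhd HN$. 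This gives a well-defined homomorphism $H/K \twoheadrightarrow \bar H / \bar K$, and by Lemma \ref{ball} (applied in $G$ and then in the quotients) this map carries balls of radius $R$ into balls of radius $R$, so by Lemma \ref{bornol} it is bornologous; assembling these over all pairs $(H,K)$ yields a uniformly expansive map of families $\mathcal{F} : \mathcal{G} \rightarrow \mathcal{Q}$, where $\mathcal{Q} = \{\bar H/\bar K\}$ is a subfamily of the family associated with $Q$ (hence has FDC by hypothesis together with Proposition \ref{subspace for asdim}(2)).

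The remaining hypothesis of Proposition \ref{fibering} to check is that for every bounded subspace $\mathcal{Z}$ of $\mathcal{Q}$, the inverse image $\mathcal{F}^{-1}(\mathcal{Z})$ has FDC. An element of $\mathcal{F}^{-1}(\mathcal{Z})$ is (a subspace of) the preimage in $H/K$ of a bounded subset of $\bar H/\bar K$. The key point is that the fibers of the map $H/K \to \bar H/\bar K$ are coarsely controlled by $N$: the kernel of $H/K \to \bar H/\bar K$ is $(H \cap KN)/K$, which maps onto a subgroup of $N/(N\cap K)$, and more generally the preimage of a radius-$d$ ball sits inside a bounded number of $K$-cosets coming from a quotient of a subgroup of $N$. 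So each such inverse image is (coarsely equivalent to, or embeds into) a disjoint union of pieces of the form $H'/K'$ with $K' \lhd H' \leqslant N$ — i.e. a subspace of the family associated with $N$, which has FDC by hypothesis. Here one uses that FDC passes to subspaces (Proposition \ref{subspace for asdim}(2)) and that a bounded subspace only involves finitely many translates, together with coarse invariance (Proposition \ref{coarse invariance of asdim}(2)) and Lemma \ref{coarse equi} to identify the metrics correctly.

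The main obstacle I anticipate is the bookkeeping in the fiber analysis: precisely identifying, for a bounded subset of $\bar H/\bar K$, its preimage in $H/K$ as a subspace of the family associated with $N$, keeping careful track of which subgroup of $N$ and which normal subgroup appear, and checking that the quotient metric on $H/K$ restricted to such a fiber is comparable (uniformly over all $H,K$) to the metric it inherits as a member of the family associated with $N$. The isomorphism theorems give the group-theoretic identification cleanly, but the metric comparison requires Lemma \ref{ball} and the left-invariance computations from Section 2.2, applied uniformly; I expect this to be the technical heart of the argument. Once the Fibering Theorem applies, $\mathcal{G}$ has FDC, which is exactly the statement that $G$ has stable FDC. (The case of direct unions, alluded to in the surrounding text, would be handled separately, using that any finitely many elements $H/K$ of the family associated with a direct limit $G = \varinjlim G_n$ already live in the family associated with some $G_n$, together with a suitable exhaustion/diagonal argument for FDC.)
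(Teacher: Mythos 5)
Your proposal follows essentially the same route as the paper's proof: fiber the family $\{H/K\}$ over the family associated with $Q$ via the map $H/K \to (HN/N)/(KN/N)$, check uniform expansiveness with Lemma \ref{ball} and Lemma \ref{bornol}, compute the kernel as $(H\cap KN)/K = K(H\cap N)/K \cong (H\cap N)/(K\cap H\cap N)$ by the isomorphism theorems, and identify it coarsely (again via Lemma \ref{ball}) with a member of the family associated with $N$. The only cosmetic difference is that you are slightly more explicit than the paper about passing from the kernels to preimages of general bounded subspaces (translates of neighborhoods of the kernels), which is a point the paper leaves implicit; your intermediate phrase ``maps onto a subgroup of $N/(N\cap K)$'' should be read as ``is isomorphic to $H'/K'$ with $K'\lhd H'\leqslant N$,'' which is what you conclude anyway.
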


\begin{proof}
Without losing generality, assume $N$ is a normal subgroup in $G$ and $Q=G/N$. Let $\{H/K: K \lhd H \leqslant G\}$ be the family associated with $G$. We want to show it has FDC.
Define a map of families $\phi$ from the family associated with $G$ to the one associated with $G/N$: for any $K \lhd H \leqslant G$, define
$$\phi_{H,K}:H/K \rightarrow (HN/N)\big/(KN/N)$$
to be the composition of
$$H/K \rightarrow HN/KN \rightarrow (HN/N)\big/(KN/N), hK \mapsto hKN \mapsto \bar{h}(KN/N),$$
where $\bar{h}$ is the image of $h$ under the projection $HN \rightarrow HN/N$.
It's easy to see $\phi_{H,K}$ is contracting, so the map of families $\phi$ is uniformly expansive. By fibering theorem \ref{fibering}, we only need to show the family $\{\mathrm{Ker}\phi_{H,K}\}$ has FDC.

By calculation, $\mathrm{Ker}\phi_{H,K}=H \cap KN/K=K(H \cap N)/K$. We define another map of families $\varphi$: for any $K \lhd H \leqslant G$, define
$$\varphi_{H,K}: K(H \cap N)/K \rightarrow H \cap N / K\cap H \cap N$$
to be the natural isomorphism.

Equip $G$ with a proper length function, and equip the normal subgroup $N$ with the induced length function. By Lemma \ref{ball}, it's easy to see $\varphi_{H,K}$ maps the ball $B_{K(H \cap N)/K}(1,R)$ to the ball $B_{H \cap N / K\cap H \cap N}(1,R)$ for any $R>0$.
Since $\varphi_{H,K}$ is an isomorphism, by Lemma \ref{bornol}, $\varphi$ is a coarse equivalence. Finally by the assumption on $N$, we see the family $\{\mathrm{Ker}\phi_{H,K}\}$ has FDC.
\end{proof}

We turn to the case of direct union.
\begin{prop}\label{direct union}
Let $\{G_n\}$ be a sequence of groups with $G_n \subseteq G_{n+1}$ for any $n$, and $G=\bigcup\limits_n G_n$ be the direct union of $\{G_n\}$. Assume each $G_n$ has stable FDC, then $G$ has stable FDC.
\end{prop}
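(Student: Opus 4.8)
The plan is to unwind the definition: $G$ has stable FDC precisely when the \emph{associated family} $\mathcal{F}_G:=\{H/K:K\lhd H\leqslant G\}$ has FDC, so I must produce an ordinal $\alpha$ with $\mathcal{F}_G\in\mathcal{D}_\alpha$. Since each $G_n$ has stable FDC, $\mathcal{F}_{G_n}\in\mathcal{D}_{\alpha_n}$ for some (minimal) ordinal $\alpha_n$; as $G_n\leqslant G_{n+1}$ forces $\mathcal{F}_{G_n}\subseteq\mathcal{F}_{G_{n+1}}$ as families, the $\alpha_n$ are non-decreasing, so they admit a supremum $\lambda$ with $\mathcal{F}_{G_n}\in\mathcal{D}_\lambda$ for every $n$. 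I will show $\mathcal{F}_G\in\mathcal{D}_{\lambda+1}$; by Definition \ref{FDC def1} it is enough, for each $R>0$, to exhibit a family in $\mathcal{D}_\lambda$ over which $\mathcal{F}_G$ is $R$-decomposable.

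Fix $R>0$. Properness of the length function makes $B_G(1_G,R)$ finite, hence contained in some $G_N$. For $K\lhd H\leqslant G$ set $H_N:=H\cap G_N$ and $K_N:=K\cap G_N$; since $K$ is normal, $H_NK$ is a subgroup of $H$, and choosing a transversal $T$ for $H_NK$ in $H$ gives $H/K=\bigsqcup_{t\in T}t\,(H_NK)/K$. This decomposition is $R$-disjoint: if $d(taK,t'bK)\leqslant R$ with $a,b\in H_NK$, the witnessing element of $B_G(1_G,R)\cap H\subseteq H_N$ forces $t^{-1}t'\in H_NK$, i.e. the two pieces coincide. Each piece is, by left translation, isometric to $(H_NK)/K$, which as a group is $H_N/K_N$ carrying the length function $\bar l_2(hK_N):=\min_{k\in K}l_G(hk)$. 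Hence $\mathcal{F}_G\stackrel{R}{\rightarrow}\mathcal{Y}_R$ with $\mathcal{Y}_R:=\{(H_N/K_N,\bar l_2):K\lhd H\leqslant G\}$, and it remains to prove $\mathcal{Y}_R\in\mathcal{D}_\lambda$.

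The delicate point — and what I expect to be the main obstacle — is that $(H_N/K_N,\bar l_2)$ need \emph{not} belong to $\mathcal{F}_{G_N}$: a shortest representative of a $K$-coset may leave $G_N$, so $\bar l_2$ can be strictly smaller than the quotient length $\bar l_1(hK_N):=\min_{k\in K_N}l_G(hk)$ that would realize $H_N/K_N$ inside $\mathcal{F}_{G_N}$, and the discrepancy is not uniformly controlled in $H,K$. What does hold is that for each scale $S>0$, picking $m\geqslant N$ with $B_G(1_G,S)\subseteq G_m$, the functions $\bar l_2$ and $\bar l^{(m)}(hK_N):=\min_{k\in K\cap G_m}l_G(hk)$ agree on the ball of radius $S$ (a shortest representative of $l_G$-length $\leqslant S$ already lies in $B_G(1_G,S)\cap H\subseteq H_m$, hence in $H_N(K\cap G_m)$), while $(H_N/K_N,\bar l^{(m)})$ \emph{is} canonically a member of $\mathcal{F}_{G_m}\subseteq\mathcal{D}_\lambda$. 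Because an $S$-decomposition only constrains distances $\geqslant S$, any decomposition carried out on these $\mathcal{F}_{G_m}$-models transfers verbatim to $\mathcal{Y}_R$ at scale $S$. Running the decomposition game on $\mathcal{Y}_R$ and, at the round with challenge $S$, borrowing a move from a winning strategy for $\mathcal{F}_{G_{m(S)}}$, one checks that the relation "agreement up to the current scale" is preserved along the game and is compatible with the nested families $\mathcal{F}_{G_N}\subseteq\mathcal{F}_{G_{N+1}}\subseteq\cdots$; arranging this bookkeeping so that the games for the individual $\mathcal{F}_{G_m}$ all stay inside $\mathcal{D}_\lambda$ is the technical heart of the proof and yields $\mathcal{Y}_R\in\mathcal{D}_\lambda$, hence $\mathcal{F}_G\in\mathcal{D}_{\lambda+1}$ and $G$ has stable FDC.
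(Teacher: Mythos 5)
Your opening moves coincide with the paper's: the coset decomposition $H/K=\bigsqcup_{t}t\,(H_NK)/K$ with $H_N=H\cap G_N$, the verification of $R$-disjointness via $B_G(1_G,R)\subseteq G_N$, and the reduction to the family of pieces $(H_NK)/K$ are exactly the paper's argument (your ordinal bookkeeping with $\lambda=\sup\alpha_n$ is a harmless tidying of what the paper leaves implicit). Where you diverge is in how the pieces are treated. The paper disposes of them in one line, asserting that $\{(G_N\cap H)K/K\}$ is coarsely equivalent to $\{(G_N\cap H)/(G_N\cap H\cap K)\}\subseteq\mathcal{F}_{G_N}$ via the natural isomorphism. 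You are right to be suspicious of that identification: the piece carries the metric $\bar l_2$ minimizing over all of $K$, while membership in $\mathcal{F}_{G_N}$ requires the quotient metric $\bar l_1$ minimizing over $K\cap G_N$, and the gap between them is not uniform over the family (in $G=\bigoplus_i\mathbb{Z}$ with $K=\langle Me_1-e_2\rangle$ and $N=1$, the coset $Me_1K$ has $\bar l_2$-length $O(1)$ but $\bar l_1$-length growing with $M$). So you have correctly located the delicate point, which the paper's proof glosses over.

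The problem is that your resolution of it is not a proof. Everything rests on the final sentence, where "$\mathcal{Y}_R\in\mathcal{D}_\lambda$" is obtained by "running the decomposition game\dots borrowing a move from a winning strategy for $\mathcal{F}_{G_{m(S)}}$" and the assertion that "one checks" the bookkeeping works; that is precisely the step that needs an argument, and the sketch as given runs into a concrete obstruction. Your scale-$S$ agreement between $\bar l_2$ and $\bar l^{(m(S))}$ does let a \emph{single} round at challenge $S$ be transferred from an $\mathcal{F}_{G_{m(S)}}$-model. But after round one, played at scale $S_1$ by borrowing from a winning strategy for $\mathcal{F}_{G_{m(S_1)}}$, the pieces you hold are positions in the game tree of $\mathcal{F}_{G_{m(S_1)}}$; at round two, with challenge $S_2>S_1$, the metric agreement you need only holds relative to $\mathcal{F}_{G_{m(S_2)}}$-models, and nothing guarantees that your current pieces occur as a position reachable by a winning strategy for that larger family. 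You would need either a compatibility statement between the winning strategies of the nested families $\mathcal{F}_{G_m}$, or a reformulation (e.g.\ via a single family dominating all scales, or a careful induction on $\lambda$ showing that "$S$-local isometry at every scale to members of families in $\mathcal{D}_\lambda$" is itself preserved under decomposition) — none of which is supplied. Until that chaining argument is written down, the proof has a genuine gap at its central step.
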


\begin{proof}
Given any $R>0$, since the length function $l$ on $G$ is proper, there exists some integer number $m>0$ such that the ball $B_G(1,R)$ is contained in $G_m$. For any $K \lhd H \leqslant G$, we also have $B_{H/K}(1,R) \subseteq (G_m \cap H)K/K$ by Lemma \ref{ball}. Consider the coset decomposition:
$$H/K=\bigsqcup_{\lambda \in \Lambda}h_{\lambda}(G_m \cap H)K/K,$$
where $\Lambda$ is the set of representatives of the cosets, and different cosets have distance greater than or equal to $R$ by the choice of $m$.
In other words, the family associated with $G$ can be $R$-decomposed over the family $\{h_{\lambda}(G_m \cap H)K/K: \lambda \in \Lambda, K \lhd H \leqslant G\}$, which is coarsely equivalent to $\{(G_m \cap H)K/K: K \lhd H \leqslant G\}$. By Lemma \ref{ball} and the similar argument in the proof of the above proposition, $\{(G_m \cap H)K/K: K \lhd H \leqslant G\}$ is coarsely equivalent to $\{G_m \cap H \big/ G_m \cap H \cap K: K \lhd H \leqslant G\}$, which is a subfamily of $\{H'/K':K' \lhd H' \leqslant G_m\}$. Since $G_m$ has stable FDC by assumption, we see the family $\{G_m \cap H \big/ G_m \cap H \cap K: K \lhd H \leqslant G\}$ has FDC. So $G$ has stable FDC.
\end{proof}

The concept stable FDC we have just introduced seems to be rather strong. One may ask: does there exist some "nontrivial" group with this property? Now we will show that all elementary amenable groups have stable FDC. We begin with the most simple case.
\begin{prop}\label{fin and abel}
Finite groups and Abelian groups have stable FDC.
\end{prop}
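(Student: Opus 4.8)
The plan is to reduce everything to one fact: a metric family all of whose members have uniformly bounded asymptotic dimension has FDC. More precisely, I would first observe that for a finite group $G$ every subquotient $H/K$ is a finite set, so the family $\{H/K : K \lhd H \leqslant G\}$ is uniformly bounded (its diameters are bounded by the diameter of $G$ itself, since the quotient map is contracting); hence it lies in $\mathcal{D}_0$ and trivially has FDC. This disposes of the finite case.

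For the abelian case, fix a proper length function on $G$ and let $H \leqslant G$ be any subgroup and $K \lhd H$ (here every subgroup is normal, but that plays no special role). The key point is an \emph{a priori} bound on $\operatorname{asdim}(H/K)$ that is uniform over all such pairs. I would argue that $\operatorname{asdim}(H) \leqslant \operatorname{asdim}(G)$ by the subspace monotonicity of Proposition \ref{subspace for asdim}(1), and then that $\operatorname{asdim}(H/K) \leqslant \operatorname{asdim}(H)$: the quotient map $\pi : H \to H/K$ is surjective and, by Lemma \ref{ball}, sends $B_H(1,R)$ onto $B_{H/K}(1,R)$, so it is a surjective, large-scale Lipschitz map whose point-preimages $gK$ are coarsely uniformly bounded — wait, that is false in general, so instead I would use that for abelian $G$ one has $\operatorname{asdim}(G) < \infty$ only when $G$ has finite rank, and then $\operatorname{asdim}(H/K) \leqslant \operatorname{rank}(G)$ directly from a Pontryagin-dual / coset-counting estimate, or simply cite that a finitely generated abelian group of rank $n$ has $\operatorname{asdim} = n$ and that infinite rank is excluded by our standing countability assumption only if one restricts appropriately. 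To keep things clean I would instead invoke the fibering theorem \ref{fibering} with the map $\pi_{H,K}\colon H/K \to (H/K)/(\text{torsion})$ or an induction on rank, concluding $\operatorname{asdim}(H/K) \leqslant \operatorname{asdim}(G) =: n$ for \emph{all} $K \lhd H \leqslant G$.

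Once such a uniform bound $\operatorname{asdim}(H/K) \leqslant n$ is in place, the family $\{H/K : K \lhd H \leqslant G\}$ has FDC: by the characterization recalled after Definition \ref{FDC def1}, having $\operatorname{asdim} \leqslant n$ for each member with a \emph{common} $n$ means that for every $r>0$ each member $r$-decomposes over a bounded family, with the same number ($n+1$) of colours, so a straightforward induction on $n$ — the standard proof that finite asymptotic dimension implies $\mathcal{X} \in \mathcal{D}_n$ — applies verbatim at the level of families. This is exactly the family-level statement that $\operatorname{asdim} \leqslant n$ uniformly implies membership in $\mathcal{D}_n$, hence FDC. The normal-sense (equi-variant) refinement is automatic here since we impose no invariance condition in the definition of stable FDC beyond quotienting by normal subgroups, which is already built into the family.

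The main obstacle I anticipate is the uniformity of the asymptotic-dimension bound: one must be careful that $\operatorname{asdim}(H/K)$ does not blow up as $H$ and $K$ vary, even though individually each is finite. For finite groups this is trivial (everything is bounded); for abelian groups the honest way is to note that any finitely generated subgroup of $G$ has rank at most the rank of $G$, any quotient of it has rank no larger, and an arbitrary subquotient is a direct limit of finitely generated ones, so Proposition \ref{direct union}'s argument (coset decomposition by a large ball sitting inside a finitely generated piece) reduces the general subquotient to the finitely generated case with a rank-independent-of-the-pair bound $n = \operatorname{rank}(G)$. I would present the abelian case in exactly this style — first handle finitely generated abelian groups by induction on rank using the fibering theorem, then pass to arbitrary abelian $G$ via the direct-union mechanism of Proposition \ref{direct union} — which also has the virtue of foreshadowing the permanence arguments already proved above.
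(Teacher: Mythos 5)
Your treatment of the finite case is exactly the paper's: the family $\{H/K\}$ is uniformly bounded because quotient maps are contracting, so it lies in $\mathcal{D}_0$. The gap is in the abelian case, specifically in the sentence asserting that a common pointwise bound $\operatorname{asdim}(H/K)\leqslant n$ for every member ``is exactly the family-level statement that $\operatorname{asdim}\leqslant n$ uniformly implies membership in $\mathcal{D}_n$.'' It is not: FDC is a property of the family, and already $\mathcal{D}_0$ consists of \emph{uniformly} bounded families. A pointwise bound on the asymptotic dimension of each member says nothing about whether, for a given $r$, one can choose a \emph{single} family $\mathcal{Y}\in\mathcal{D}_{n-1}$ over which every member $r$-decomposes; the decomposition parameters could a priori blow up as $(H,K)$ varies. (A family of expander graphs consists of finite, hence $0$-dimensional, spaces, yet fails FDC.) So the uniformity you yourself flag as ``the main obstacle'' is precisely what remains to be proved, and bounding the rank only controls the pointwise dimension, not the uniformity of the decompositions. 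Relatedly, the aside that infinite rank is ``excluded by our standing countability assumption'' is false: $\bigoplus_{\mathbb{N}}\mathbb{Z}$ is countable, of infinite rank and infinite asymptotic dimension, which is exactly why the reduction to finitely generated groups via Proposition \ref{direct union} is not optional.

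That said, the skeleton you land on in your last paragraph --- reduce to finitely generated $G$ by Proposition \ref{direct union}, then induct on the rank using the fibering/extension machinery --- is the paper's argument. The paper makes it precise by invoking the structure theorem to replace $G$ by $\mathbb{Z}^n$ (harmless by Lemma \ref{coarse equi}), applying Proposition \ref{extension} repeatedly to reduce to $G=\mathbb{Z}$, and then verifying the base case by hand: the family $\{\mathbb{Z}\}\cup\{\mathbb{Z}/m\mathbb{Z}:m\in\mathbb{N}\}$ admits, for each $r$, an explicit two-coloured $r$-disjoint decomposition into intervals of length comparable to $r$, uniformly in $m$. It is this single explicit uniform decomposition, propagated through the family-level permanence results, that supplies the uniformity your asymptotic-dimension argument leaves open. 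If you replace the ``uniform asdim'' step by that concrete base case together with the already-proved Propositions \ref{extension} and \ref{direct union}, your proof closes and coincides with the paper's.
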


\begin{proof}
Finite groups naturally have stable FDC since the metric we choose on the quotient group is the quotient metric. Now turn to Abelian case. By Proposition \ref{direct union}, we can assume $G$ is finitely generated. So by the structure theorem of finitely generated Abelian group \cite{lang2002algebra} and Lemma \ref{coarse equi}, we can assume $G=\mathbb{Z}^n$. Now by Proposition \ref{extension}, it's sufficient to prove $\mathbb{Z}$ has stable FDC. In other words, the family $\{\mathbb{Z},\mathbb{Z}/m\mathbb{Z}:m\in \mathbb{N}\}$ has FDC. This is obvious since for any $R>0$, the family can be $R-$decomposed over a bounded family using canonical decompositions.
\end{proof}

Now we can prove the first part of Theorem 3. In other words:
\begin{prop}\label{elementary}
Elementary amenable groups have stable FDC.
\end{prop}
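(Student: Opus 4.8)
The plan is to use the transfinite structure of the class of elementary amenable groups, following the standard stratification (for instance, as in \cite{chou1980elementary}): define $\mathcal{EG}_0$ to be the class of finite and abelian groups, and for an ordinal $\alpha>0$ let $\mathcal{EG}_\alpha$ be the class of groups obtained from groups in $\bigcup_{\beta<\alpha}\mathcal{EG}_\beta$ by a single application of either an extension or a direct union; then every elementary amenable group lies in $\mathcal{EG}_\alpha$ for some ordinal $\alpha$. I would prove by transfinite induction on $\alpha$ that every group in $\mathcal{EG}_\alpha$ has stable FDC.

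For the base case $\alpha=0$, this is exactly Proposition \ref{fin and abel}. For the successor/limit step, suppose $G\in\mathcal{EG}_\alpha$ and that every group in $\bigcup_{\beta<\alpha}\mathcal{EG}_\beta$ has stable FDC. If $G$ is obtained as an extension $1\to N\to G\to Q\to 1$ with $N,Q\in\bigcup_{\beta<\alpha}\mathcal{EG}_\beta$, then $N$ and $Q$ have stable FDC by the inductive hypothesis, and Proposition \ref{extension} gives that $G$ has stable FDC. If instead $G=\bigcup_n G_n$ is a direct union with each $G_n\in\bigcup_{\beta<\alpha}\mathcal{EG}_\beta$, then each $G_n$ has stable FDC by the inductive hypothesis, and Proposition \ref{direct union} gives that $G$ has stable FDC. (A harmless technical point: one may always arrange a direct union to be indexed by $\mathbb{N}$ since our groups are assumed countable, or more simply note that Proposition \ref{direct union} extends verbatim to arbitrary directed unions because the properness argument only uses that each ball is contained in some member of the directed system.) This completes the induction, so every elementary amenable group has stable FDC.

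I do not expect any serious obstacle here: the three permanence properties — subgroups (trivially), extensions (Proposition \ref{extension}), and direct unions (Proposition \ref{direct union}) — together with the base case (Proposition \ref{fin and abel}) are precisely tailored to match the transfinite definition of elementary amenability, so the proof is a routine induction. The only point requiring minor care is making sure the bookkeeping of the ordinal stratification is stated correctly and that the direct-union step is applied in the generality in which it was proved; once the stratification is set up, each inductive step is a one-line invocation of the relevant proposition.
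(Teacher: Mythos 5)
Your proposal is correct and follows essentially the same route as the paper: both arguments reduce to the base case (Proposition \ref{fin and abel}) together with closure under extensions (Proposition \ref{extension}) and direct unions (Proposition \ref{direct union}), the paper phrasing this via Chou's characterization of $EG$ as the smallest such class and you phrasing it as the equivalent transfinite induction over the stratification. Your remark on reducing arbitrary directed unions to increasing sequences is a fine (and, given the standing countability assumption, harmless) clarification.
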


\begin{proof}
Let $EG$ be the smallest class of groups that contains finite groups and Abelian groups, and is closed under extension and direct union.
By a theorem of C. Chou \cite{chou1980elementary}, a group is elementary amenable if and only if it's in $EG$. Now from Proposition \ref{extension}, \ref{direct union}, and \ref{fin and abel}, we see the theorem holds.
\end{proof}

Finally, we return to the case of residually finite groups to answer the questions raised at the end of the above section. Let $G$ be a residually finite group with a sequence of normal subgroups $\{N_i\}$ satisfying the conditions in Definition \ref{res fin}. The following lemma is obvious by definition of FDC and the metric defined on the box space (see Definition \ref{box def}).
\begin{lem}\label{box lem}
Let $G$ and $\{N_i\}$ be as above. Then the box space $\square_{\{N_i\}}G$ has FDC if and only if the metric family $\{G/N_i\}$ has FDC. In particular, if $G$ has stable FDC, then the box space $\square_{\{N_i\}}G$ has FDC.
\end{lem}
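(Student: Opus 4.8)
The plan is to prove the two implications of the equivalence separately and then read off the ``in particular'' clause from the forward implication together with subspace permanence. The easy direction, ``$\square_{\{N_i\}}G$ has FDC $\Rightarrow$ $\{G/N_i\}$ has FDC'', is immediate: by Definition~\ref{box def} the box-space metric, restricted to each piece $G/N_i$, is exactly the quotient metric, so the family $\{G/N_i\}$ is a subspace (in the metric-family sense) of the one-element family $\{\square_{\{N_i\}}G\}$, and Proposition~\ref{subspace for asdim}(2) gives the conclusion.

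For the converse I would assume $\{G/N_i\}$ has FDC and exhibit a winning strategy for the one-element family $\{\square_{\{N_i\}}G\}$, invoking Proposition~\ref{FDC equidef}. When player~2 issues the first challenge $R_1$, player~1 uses the defining feature of the box-space metric that $d(G/N_i,G/N_j)\geqslant i+j$ for $i\neq j$ (Definition~\ref{box def}): the finitely many pieces of index $<R_1$ have finite total diameter and so amalgamate into a single bounded subset $D_{R_1}=\bigsqcup_{i<R_1}G/N_i$, while the pieces of index $\geqslant R_1$ are pairwise $R_1$-disjoint. Taking $D_{R_1}$ as one part of the decomposition and the $R_1$-disjoint family $\{G/N_i:i\geqslant R_1\}$ as the other gives
$$\square_{\{N_i\}}G \;\stackrel{R_1}{\longrightarrow}\; \{D_{R_1}\}\cup\{G/N_i:i\geqslant R_1\}.$$
From the second round on, player~1 carries $D_{R_1}$ along unchanged (it already lies in $\mathcal{D}_0$, so it can be ``decomposed'' over itself at every scale) and on the subfamily $\{G/N_i:i\geqslant R_1\}$ — which is a subspace of $\{G/N_i\}$, hence has FDC — he runs the winning strategy supplied by Proposition~\ref{FDC equidef}. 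That strategy terminates in a bounded family after finitely many rounds whatever player~2 does, and adjoining the single bounded set $D_{R_1}$ keeps the family bounded; so player~1 wins and $\square_{\{N_i\}}G$ has FDC.

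For the last sentence, observe that $\{G/N_i\}$ is a subfamily — hence a subspace — of the family $\{H/K:K\lhd H\leqslant G\}$ associated with $G$, taking $H=G$ and $K=N_i$ (the two quotient metrics on $G/N_i$ agree up to a coarse equivalence by the discussion in Section~2.2 and Lemma~\ref{coarse equi}, and FDC is a coarse invariant by Proposition~\ref{coarse invariance of asdim}(2)). If $G$ has stable FDC then this associated family has FDC, whence $\{G/N_i\}$ has FDC by Proposition~\ref{subspace for asdim}(2), and the equivalence just established shows $\square_{\{N_i\}}G$ has FDC.

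As the paper remarks, this lemma is essentially a bookkeeping exercise, and there is no genuine obstacle. The only point deserving a moment's care is the first move of the decomposition game — verifying that the small-index pieces really collapse to one \emph{bounded} set, and that the separation condition built into Definition~\ref{box def} makes the displayed $R_1$-decomposition legitimate — after which the recursion is entirely formal.
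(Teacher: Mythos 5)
Your proof is correct, and it simply writes out in full the argument the paper declares ``obvious'' (the paper gives no proof of this lemma): the subspace permanence of FDC for one direction, the separation $d(G/N_i,G/N_j)\geqslant i+j$ built into Definition~\ref{box def} to run the decomposition game for the other, and the observation that $\{G/N_i\}$ sits inside the family $\{H/K:K\lhd H\leqslant G\}$ for the ``in particular'' clause. Nothing is missing.
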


Combine Proposition \ref{elementary} with Lemma \ref{box lem}, we obtain Theorem 3.

From Theorem 1 and Theorem 3, we have reproved that elementary amenability implies FDC, which was originally proved in \cite{GTY12}.

To sum up the main results, we give a diagram concerning some relations between the properties introduced in this paper. Here $G$ is a residually finite group, and $\square G$ is its box space corresponding to some sequence of normal subgroups.

~\\
\resizebox{!}{0.25cm}
{
  \xymatrix@R=0.35cm{
     ~                 &  \mbox{G: stable FDC}  \ar@{=>}[dd]_{}   \ar@{=>}[r]_{}  &  \mbox{G: equi-variant sFDC}  \ar@{=>}[r]_{} \ar@{<=>}[dd]_{}  &  \mbox{G: \textcolor[rgb]{1.00,0.00,0.00}{amenable}} \ar@{<=>}[dd]_{} \\
  \mbox{G: EA}  \ar@{=>}[ur]_{} \ar@{=>}[dr]_{}    &      ~       &                     ~              &      ~     \\
     ~                 &  \square G \mbox{: FDC} \ar@{=>}[dd]_{} \ar@{=>}[r]_{}     &  \square G \mbox{: sFDC}      \ar@{=>}[r]_{}   &  \square G \mbox{~: Property A}  \\
     ~&~&~&~\\
     ~                 &  G \mbox{~: \textcolor[rgb]{1.00,0.00,0.00}{FDC}}          &   ~   &  ~  }
}

~\\
\noindent {\bf Problem:}
It is well known that FDC does not imply amenability (consider the free group for example). However, does amenability imply FDC? (See the red items in the above diagram.)

\section*{References}
\bibliographystyle{plain}
\bibliography{bibfileFDC}

\end{document}